\DeclareMathOperator{\curl}{curl}
\DeclareMathOperator{\tr}{tr}
\DeclareMathOperator{\Span}{span}
\DeclareMathOperator{\dom}{Dom}
\DeclareMathOperator{\re}{Re}
\DeclareMathOperator{\im}{Im}
\DeclareMathOperator{\Div}{div}
\DeclareMathOperator{\Ker}{Ker}
\DeclareMathOperator{\ind}{ind}
\DeclareMathOperator{\Id}{Id}
\DeclarePairedDelimiter\IP{\langle}{\rangle}
\DeclarePairedDelimiter\abs{\lvert}{\rvert}
\DeclarePairedDelimiter\norm{\lVert}{\rVert}
\DeclarePairedDelimiter\AC{\lceil}{\rceil}
\newcommand{\C}{\mathbb{C}}
\newcommand{\Z}{\mathbb{Z}}
\newcommand{\R}{\mathbb{R}}
\newcommand{\N}{\mathbb{N}}
\newcommand{\Hb}{\mathscr{H}^{\Omega}}
\newcommand{\Hbc}{\mathscr{H}^{\Omega,c}}
\newcommand{\Hcr}{\mathscr{H}_{\Ab,g}^{\mathcal O}}
\newcommand{\Hcc}{\mathscr{H}_{\Ab,g}^{\bar{\mathcal O}}}
\newcommand{\Hm}{\mathscr{H}_m}
\newcommand{\Hbm}{\mathscr{H}^{\Omega,m}}
\newcommand{\Ab}{\mathbf{A}}
\newcommand{\Dd}{\mathscr{D}}
\newcommand{\dd}{\mathop{}\!{\mathrm d}}
\newcommand{\ii}{\mathrm i}
\newcommand{\ee}{\mathrm e}
\newcommand{\jb}{\mathbf j}
\newcommand{\eb}{\mathbf e}
\newcommand{\n}{\mathrm n}
\newcommand{\dsum}{\oplus_{j=0}^d}
\newtheorem{theorem}{Theorem}[section]
\newtheorem{lemma}[theorem]{Lemma}
\newtheorem{proposition}[theorem]{Proposition}
\newtheorem{corollary}[theorem]{Corollary}
\theoremstyle{remark}
\newtheorem{remark}[theorem]{Remark}
\numberwithin{equation}{section}
\author{S. Fournais}
\author{R. L. Frank}
\author{M. Goffeng}
\author{A. Kachmar}
\author{M. Sundqvist}
\address[S. Fournais]{Department of Mathematics, University of Copenhagen, Universitets\-parken 5, DK-2100 Copenhagen \O, Denmark}
\email{sofo@math.ku.dk}
\address[R. L. Frank]{Department of Mathematics, Ludwig-Maximilans Universit\"at M\"unchen, Germany, and Munich Center for Quantum Science and Technology, Germany, and Department of Mathematics, Caltech, USA}
\email{r.frank@lmu.de}
\address[M. Goffeng]{Department of Mathematics, Lund University, Sweden}
\email{magnus.goffeng@math.lth.se}
\address[A. Kachmar]{Department of Mathematics, Lebanese University, Nabatiyeh, Lebanon.}
\email[A. Kachmar]{ayman.kashmar@gmail.com}
\address[M. Sundqvist]{Department of Mathematics, Lund University, Sweden}
\email{mikael.persson\_sundqvist@math.lth.se}
\title[Negative Eigenvalues for the Robin Pauli operator]{Counting Negative Eigenvalues for\\ the magnetic Pauli operator}
\keywords{Pauli operator, Counting negative eigenvalues, Magnetic Weyl law, Atiyah--Patodi--Singer index theory, Trace formula, Benjamin--Ono equation}
\subjclass{35P15,58J20,47A40}
\begin{document}
\begin{abstract}
We study the Pauli operator in a two-dimensional, connected domain with Neumann or Robin boundary  condition. We prove a sharp lower bound on the number of negative eigenvalues reminiscent of the Aharonov--Casher formula. We apply this lower bound to obtain a new formula on the number of eigenvalues of the magnetic Neumann Laplacian in the semi-classical limit. Our approach relies on reduction to a boundary Dirac operator. We analyze this boundary operator in two different ways.  The first approach uses Atiyah--Patodi--Singer index theory. The second approach relies on  a conservation law for the Benjamin--Ono equation. 
\end{abstract}

\maketitle

\section{Introduction}

\subsection{Motivation and background}

The statics and dynamics of a quantum mechanical particle are described by the Schr\"odinger equation. Of particular interest are bound states, which correspond to eigenvalues of the underlying Schr\"odinger operator, and in many applications, both practical and theoretical, one is interested in computing, or estimating, their number.
	
The main goal of this paper is to obtain such an estimate on the number of bound states in the setting of a spin-1/2 charged particle moving in a bounded subset of the plane in the presence of a magnetic field. At the same time, we will show that our estimate is best possible in a variety of asymptotic and non-asymptotic cases.

Our main result is related to the celebrated Aharonov--Casher theorem, which concerns the case of a particle moving in the whole plane. As we will recall momentarily, this theorem provides an identity for the number of zero eigenvalues in terms of the normalized flux of the magnetic field. A special case of our findings is that, if the particle is restricted to a bounded, connected subset by means of Neumann boundary conditions, then the Aharonov--Casher equality turns into an inequality for the number of resulting negative eigenvalues. This is remarkable since, in general, there is no monotonicity under Neumann boundary conditions.

As a consequence we obtain an isochoric inequality: among all simply connected sets of a given area, the disc has the least number of negative eigenvalues in the presence of a homogeneous field.

To prove our result we explore deep links between Pauli operators,  Atiyah--Patodi--Singer index theory and a conservation law for the Benjamin--Ono equation.  Such relations are established via tools in complex analysis and pseudodifferential calculus.

In the remainder of this introduction, we recall some background on Pauli operators and the Aharonov--Casher theorem before defining the operators of interest to us. Then we will state our main results and briefly sketch some ideas of the proof.

\subsubsection{The Aharonov--Casher theorem for Pauli operators}\label{subsec:AC}

The Pauli operator in the plane is the Hamiltonian describing a spin-1/2 charged particle subject to a magnetic field; it acts on $\C^2$-valued functions and is given by
\begin{equation}
\label{eq:Pauli}
    \begin{pmatrix}
        (-\ii\nabla-\Ab)^2 - B&0\\
        0&(-\ii\nabla-\Ab)^2 + B 
    \end{pmatrix}\equiv[\sigma\cdot(-\ii\nabla-\Ab)]^2 , 
    \end{equation}
where \(\Ab=(A_1,A_2):\R^2\to\R^2\) is a vector field, and \(\sigma=(\sigma_1,\sigma_2)\) denotes the vector of Pauli matrices,
\[
\sigma_1 = \begin{pmatrix}
	0 & 1 \\ 1 & 0
\end{pmatrix},
\qquad
\sigma_2 = \begin{pmatrix}
	0 & -\ii \\ \ii & 0
\end{pmatrix}.
\]
The magnetic field is the function 
\[ B=\curl\Ab = \partial_1 A_2 - \partial_2 A_1 \,. \]
Under suitable assumptions on $\Ab$, the Pauli operator is a self-adjoint, non-negative operator in the Hilbert space $L^2(\R^2;\C^2)$. The Aharonov--Casher theorem~\cite{AC} states that it can have zero as an eigenvalue.  In fact,  the multiplicity of the zero eigenvalue is equal to
\[
\max\{\lceil\lvert \Phi \rvert\rceil-1,0\},
\]
where \(\Phi\) denotes the normalized flux 
\[
\Phi = \frac{1}{2\pi} \int_{\R^2} B(x) \dd x
\]
of the magnetic field and where we use the notation
\begin{equation}\label{eq:def-K}
	\AC{x}
	\coloneqq
	\min\{ z \in {\mathbb Z} ~:~ z \geq x\}.
\end{equation}
The Aharonov--Casher result has later been extended to also include singular magnetic fields and to magnetic fields with infinite flux~\cite{ervo,pe1,pe2,rosh1,rosh2}. The proofs rely on the fact that the elements of the kernel must be (anti)holomorphic after multiplication with a certain weight.  Recently, the number of zero modes for the Dirac operator in certain two-dimensional manifold were calculated in~\cite{Fi}.

\subsubsection{Negative eigenvalues for Pauli operators on domains}\label{subsec:Pauli}

In this work, we will be interested in the Pauli operator on a bounded domain \(\Omega\). 

One natural choice of boundary conditions is that of Dirichlet boundary condition. This was studied in~\cite{elton} and a flux effect in the eigenvalue counting-function was proved, in a certain semi-classical (or strong magnetic field) regime. The result is similar to the Aharonov--Casher theorem, although the imposed Dirichlet boundary condition splits the eigenvalues and shifts them upwards~\cite{BLRS}.  Such a semi-classical result and its proof relate to the seminal work~\cite{LSY} (see also~\cite{PhysRevB.51.10646}).

Another natural choice is that of Neumann boundary conditions, and this is what we are interested in here in this work.  The importance of Neumann boundary conditions is found in its physical relevance, in for instance superconductivity~\cite{FH-b}. Such a Pauli operator is not the square of a Dirac operator, which opens up the possibility of negative eigenvalues.

\subsubsection{A surprising observation}\label{supplies}

Our work started when we noticed from the figure in~\cite{sage} that\footnote{They consider the magnetic Schrödinger operator and not the Pauli operator, but for uniform magnetic fields they differ by a shift.} in the case of the disc in a uniform magnetic field of strength \(\beta > 0\) (i.e., \(B(x)=\beta\)), and with Neumann boundary conditions, it is possible to count the negative eigenvalues exactly. We have reproduced a version of the graph\footnote{We used Wolfram Mathematica to do the calculations;  the eigenfunctions are Whittaker functions.} of the eigenvalues as functions of $\beta>0$ from~\cite{sage} in Figure~\ref{fig:disk}, modified for the Pauli operator.

\begin{figure}[htb]\label{fig:disk}
	\includegraphics{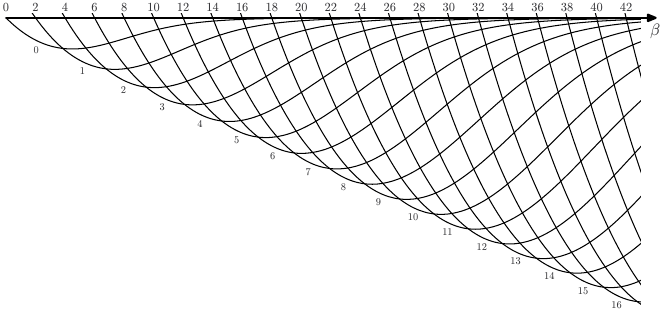}
	\caption{Eigenvalues in the case of the unit disc, with a uniform magnetic field of strength \(\beta\), where \(\beta > 0\) is a parameter, as a function of \(\beta\). The flux of the magnetic field is given by \(\Phi = \beta/2\). As \(\beta\) increases, new negative eigenvalue occurs each time \(\Phi\) passes an integer.}
\end{figure}

Working in polar coordinates,  the variables separate, and one is led to study a family of radial operators, indexed by the angular momentum \(m\), and the parameter \(\beta\). It turns out that it suffices to consider \(m\geq 0\), and that the operator corresponding to angular momentum \(m\) has precisely one negative eigenvalue if \(\beta/2 > m\geq 0\), i.e., if \(\Phi > m\), and no negative eigenvalue otherwise. Summing up, the number of negative eigenvalues equals \(\AC{\Phi}\).

\subsubsection{Our aims}

We will generalize the observation from Subsection~\ref{supplies}.  First, for radial, nonnegative, nonincreasing (with respect to the radial variable) magnetic fields in the disc, subject to a Neumann boundary condition,  we show that the number of negative eigenvalues again equals \(\AC{\Phi}\).  For a general smooth domain, and with Neumann boundary conditions, we prove a lower bound in terms of the flux.

Our methods allow us to handle more generally the case of Robin boundary conditions  and this will yield an effective flux-like term coming from the Robin data.  See Theorem~\ref{thm:main} below for a precise statement. We emphasize that, even in the case where the original problem has the more standard Neumann boundary condition, our proof passes through operators with Robin conditions in intermediate steps, so it is natural to consider the problem in this generality.

Our second objective concerns the asymptotics of the number of negative eigenvalues in the semiclassical limit. The Schr\"{o}dinger (and also Pauli) operator in a constant magnetic field on the entire plane has spectrum consisting of infinitely degenerate eigenvalues exactly at the product of the magnetic field strength $\beta$ with the odd (even) non-negative integer numbers. The density of states in these eigenspaces is given by $\frac{\beta}{2\pi}$ (times an integer factor in the Pauli case to account for the varying spin degeneracy). We recognize this as the flux per unit area. Many of the results on magnetic spectral asymptotics can be seen as based on this fact and investigating how it responds to different perturbations (non-constant field, external electric field, boundary conditions)~\cite{cdv,LSY,PhysRevB.51.10646,truc,FK,elton}.
However, these works leave open the asymptotics of the counting function exactly at a Landau level. This is clearly a more difficult case, since the high degeneracy at this energy implies a discontinuity in the counting function. Our Theorem~\ref{thm:main*} solves this problem in the most clear-cut case of a constant magnetic field and a Neumann boundary condition. The main new input to the proof of this theorem is the lower bound to the counting function from Theorem~\ref{thm:main}.

\subsection{The main result}

We will now introduce the quantities of interest and state our main result.

We consider a bounded, open set \(\Omega\subset\R^2\) such that \(\Gamma \coloneqq  \partial\Omega\) is a finite union of \(d+1\) $C^\infty$-smooth curves \(\Gamma_0,\cdots,\Gamma_d\). It is no loss of generality to assume that $\Omega$ is connected. Note that if \(\Omega\) is simply connected,  then \(d=0\). 

We assume that $B$, the magnetic field, is a smooth, real-valued function on $\overline\Omega$. The magnetic vector potential is a vector field $\mathbf{A}$, assumed smooth in $\overline\Omega$, that satisfies $\curl \Ab =B$. Note that if $\Omega$ is simply connected, then $\Ab$ is unique up to the addition of a gradient field.

Our main results will be stated in terms of the simpler magnetic Schr\"{o}dinger operator \((-\ii\nabla - \mathbf{A})^2 - B\) in \(L^2(\Omega)\), which can be considered as one of the components of the Pauli operator~\eqref{eq:Pauli}. Information about the other component of the Pauli operator can be retrieved from charge conjugation symmetry (namely, complex conjugation and flipping the vector components), which implements an anti-unitary equivalence between the Pauli operator with magnetic field $B$ and $-B$.

Given a function \(g\in C^\infty(\Gamma,\R)\) we impose a Robin\footnote{If \(\Ab\) is tangential to the boundary (as in~\eqref{eq:curl-A=B}),  then the magnetic Robin boundary condition \(\nu\cdot(-\ii\nabla - \mathbf{A})u+\ii gu=0\) coincides with the non-magnetic Robin condition \(\nu\cdot\nabla u=gu\).} boundary condition on the boundary \(\Gamma\), 
\begin{equation}
\label{eq:bc-Robin}
    \nu\cdot(\nabla-\ii\Ab) u|_{\partial\Omega}
    =
    g u|_{\partial\Omega}
    \quad 
    (u \in \dom(\Hb_{\Ab,g})).
\end{equation}
Here $\nu$ denotes the unit normal vector field of $\Gamma$, pointing towards the interior of $\Omega$. The operator 
\begin{equation*}
	\Hb_{\Ab,g} 
	= 
	(-\ii\nabla - \mathbf{A})^2 - B
\end{equation*}
is the self-adjoint operator in \(L^2(\Omega)\), associated with the closed, semi-bounded quadratic form
\begin{equation*}
    u\in H^1(\Omega)\mapsto q_{\Ab,g} (u)
    \coloneqq 
    \int_\Omega\big( \abs{(-\ii\nabla - \Ab)u}^2 - B\abs{u}^2\big)\dd x+\int_\Gamma g|u|^2\dd s(x).
\end{equation*}
Let us introduce the magnetic flux on \(\Omega\),
\begin{equation*}
    \Phi 
    \coloneqq
    \frac1{2\pi}\int_\Omega B(x)\dd x
    =
    \frac1{2\pi}\int_\Gamma\tau\cdot\Ab \dd s(x),
\end{equation*}
where $\tau$ is the unit tangent vector to $\Gamma$, oriented so that \((\tau,\nu)\) is a direct frame\footnote{In particular, if \(\Omega\) is simply connected, $\tau$ turns counterclockwise.}. Furthermore, we introduce on each connected component of \(\Gamma\)
\[
    \Phi_j
    \coloneqq
    \frac1{2\pi}\int_{\Gamma_j}\tau\cdot\Ab \dd s(x)
    \quad (0\leq j\leq d).
\]
We also introduce effective flux-like terms from the Robin function \(g\),
\begin{equation}\label{eq:def-Phi-g}
    \Phi_g 
    \coloneqq 
    \frac1{2\pi}\int_\Gamma g \dd s(x),\quad 
    \Phi_{g,j}
    \coloneqq \frac1{2\pi}\int_{\Gamma_j} g \dd s(x)\quad 
    (0\leq j\leq d).
\end{equation}
Observe that the magnetic flux on \(\Omega\) satisfies \(\Phi=\Phi_{\tau\cdot\Ab}\).

Given a self-adjoint operator \(T\) and a real number \(\lambda\), we denote by 
\[
    N(T,\lambda) 
    =
    \tr \big(\mathbf 1_{(-\infty,\lambda)}(T) \big),
    \quad
    N(T,\lambda_+) 
    =
    \tr \big(\mathbf 1_{(-\infty,\lambda]}(T)\big), 
\]
the number of eigenvalues of \(T\) that are less than (or equal) \(\lambda\).

The following is our main result.

\begin{theorem}
\label{thm:main}
Assume that \(\Omega\) is  bounded,  connected and smooth with \(d+1\) boundary components and that the magnetic field $B$ is smooth in $\overline{\Omega}$.  Let \(\Ab\) be a smooth solution to $\curl \Ab =B$. Then the number of negative eigenvalues of \(\Hb_{\Ab,g}\) satisfies
\[  
    N(\Hb_{\Ab,g},0)
    \geq     -d+ \sum_{j=0}^d \AC{\Phi_j-\Phi_{g,j}}.
\]
In particular,  if \(\Omega\) is simply connected, then
\[  
    N(\Hb_{\Ab,g},0)
    \geq     
    \AC{\Phi-\Phi_g}.
\]
Furthermore, if \(\Omega=D(0,R)\) is a disc of radius \(R\), if \(g=0\), and if the magnetic field \(B\geq 0\) is radial and radially nonincreasing, then
\[  
    N(\Hb_{\Ab,0},0)
    =
    \AC{\Phi}.
\]
\end{theorem}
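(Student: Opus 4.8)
The plan is to prove the three statements in increasing order of difficulty, relying on a reduction to a boundary Dirac operator. For the general lower bound, the first step is to identify the negative spectrum of $\Hb_{\Ab,g}$ with data on the boundary. Since $B$ is smooth on $\overline\Omega$, we may write $B = \curl\Ab$ and, by solving a Poisson problem, produce a real function $\varphi$ on $\overline\Omega$ with $\Delta\varphi = B$; then $\Ab - \nabla^\perp\varphi$ is curl-free, so after a gauge transformation (and absorbing the harmonic-conjugate ambiguities, which on a $(d+1)$-connected domain account for the fluxes $\Phi_j$) the kernel of the Pauli component $(-\ii\nabla-\Ab)^2 - B$ acting on all of $H^1(\Omega)$ — with no boundary condition — consists of functions of the form $e^{\varphi}h$ with $h$ holomorphic. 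The point is that such a function is a genuine \emph{zero mode} of the quadratic form on $\Omega$, and the boundary term $\int_\Gamma g|u|^2$ together with the Robin condition is what controls whether the corresponding form value $q_{\Ab,g}(e^\varphi h)$ is negative. I expect the clean way to organize this is: $u = e^{\varphi}h$ solves $(-\ii\nabla-\Ab)u = 0$ pointwise, hence $q_{\Ab,g}(u) = \int_\Gamma g|u|^2\,ds$, and one then builds a subspace on which this is strictly negative.

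The key step is therefore a dimension count for a space of holomorphic functions $h$ on $\Omega$ such that $\int_\Gamma g\,e^{2\varphi}|h|^2\,ds < 0$, i.e. such that the weighted boundary integral is negative. Rewriting $\tfrac{1}{2\pi}\int_{\Gamma_j} g\,ds = \Phi_{g,j}$ and noting that $\varphi$'s boundary behaviour encodes $\Phi_j$, one wants to show that the relevant space has dimension at least $-d + \sum_{j=0}^d \AC{\Phi_j - \Phi_{g,j}}$. The natural mechanism: holomorphic functions on a $(d+1)$-connected domain with prescribed divisor/growth form a space whose dimension is governed by a Riemann–Roch-type count, and the $\AC{\cdot}$ thresholds arise exactly as in Aharonov–Casher from the requirement that $e^{\varphi}h$ (or its counterpart with the multivalued phase) be single-valued and in the right weighted $L^2$ space; the $-d$ is the usual defect for a multiply connected domain (only $d+1$ independent flux conditions but the genus contributes a $-d$). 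This is precisely where Atiyah–Patodi–Singer index theory for the boundary Dirac operator, advertised in the abstract, should enter: the count of such zero modes is an index, and the $\AC{\Phi_j-\Phi_{g,j}}$ are the contributions of the spectral flow / $\eta$-invariant of the boundary operator on each $\Gamma_j$. The simply connected case $d=0$ is then the bare statement $N \ge \AC{\Phi - \Phi_g}$.

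For the disc with $g=0$ and $B\ge 0$ radial and radially nonincreasing, one proves the matching \emph{upper} bound $N(\Hb_{\Ab,0},0) \le \AC{\Phi}$. Here I would separate variables in polar coordinates: $\Hb_{\Ab,0}$ decomposes as $\bigoplus_{m\in\Z} h_m$ over angular momenta, with $h_m$ a Robin/Neumann Sturm–Liouville operator on $(0,R)$ depending on the radial field. Using the Aharonov–Casher substitution $u = e^{\varphi}r^m e^{\ii m\theta}$ adapted to each sector, and the monotonicity hypothesis on $B$, one shows $h_m$ has no negative eigenvalue once $m \ge \Phi$ (the radial zero mode is then not normalizable in the Neumann form sense, or the form is nonnegative by a Hardy-type inequality fed by $B$ nonincreasing), and exactly one negative eigenvalue when $0 \le m < \Phi$, with the $m<0$ sectors contributing nothing by the charge-conjugation/sign-of-$B$ asymmetry. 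Summing gives exactly $\#\{m \ge 0 : m < \Phi\} = \AC{\Phi}$, matching the lower bound.

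The main obstacle is the dimension count in the second paragraph: showing that the weighted boundary quadratic form $h \mapsto \int_\Gamma g\,e^{2\varphi}|h|^2\,ds$ is negative on a space of holomorphic functions of the asserted dimension, uniformly over the multiply connected topology, and correctly tracking how the integer thresholds $\AC{\Phi_j - \Phi_{g,j}}$ and the $-d$ correction emerge. Concretely this requires either a careful APS index computation for the associated boundary Dirac operator or an explicit construction of the holomorphic test functions with controlled boundary values on each $\Gamma_j$ — the latter is delicate because one must simultaneously satisfy $d+1$ flux/single-valuedness constraints while keeping the boundary form strictly negative on the whole constructed subspace. Everything else (the reduction to zero modes, the gauge transformation, the separation of variables on the disc) is standard Aharonov–Casher technology adapted to the boundary.
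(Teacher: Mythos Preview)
Your reduction to the boundary contains a genuine error. You claim that for $u = e^{\varphi}h$ with $h$ holomorphic one has $(-\ii\nabla-\Ab)u = 0$ pointwise and hence $q_{\Ab,g}(u) = \int_\Gamma g|u|^2\,ds$. This is false: such $u$ are zero modes of the \emph{second-order} operator $(-\ii\nabla-\Ab)^2 - B$, not of the first-order operator $(-\ii\nabla-\Ab)$, and the quadratic form picks up an additional boundary current. The correct identity (Proposition~\ref{prop:identity} in the paper) is
\[
q_{\Ab,g}(u) = 4\|e^{-\phi}\partial_{\bar z}e^{\phi}u\|^2 + \int_\Gamma \bigl(g|u|^2 + \tau\cdot\jb_\Ab(u)\bigr)\,ds,
\]
so that on functions $u=e^{-\phi}h$ with $h$ holomorphic the boundary term is the pairing $\langle(-\ii\partial_s - A_\tau + g)\gamma_0 h,\gamma_0 h\rangle$, a first-order Dirac form, \emph{not} the multiplication form $h\mapsto\int_\Gamma g\,e^{2\varphi}|h|^2\,ds$ you propose. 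This is not cosmetic: the magnetic flux $\Phi_j$ enters through $A_\tau = \tau\cdot\Ab|_\Gamma$ in this Dirac operator, not through ``$\varphi$'s boundary behaviour'' (indeed, in the paper's gauge $\phi=0$ on $\Gamma$). Your weighted-$L^2$ boundary form has no mechanism to see the flux at all, so the Riemann--Roch/APS count you sketch cannot be carried out on it.

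Once the correct boundary Dirac operator $\Dd_V = -\ii\partial_s + V$ with $V = g - A_\tau$ is in place, the paper proceeds exactly along the lines you outline in spirit: restrict to holomorphic $h$, bound $N(\Hb_{\Ab,g},0)$ below by $\dim(E_V\cap\gamma_0(\mathscr B(\Omega)))$ where $E_V$ is the negative spectral subspace of $\Dd_V$, and compute this dimension as an APS index. The $\AC{\Phi_j-\Phi_{g,j}}$ terms arise from the explicit spectrum $\mu_m = \tfrac{2\pi}{L_j}(m+\Phi_{V_j})$ of $\Dd_V$ on each $\Gamma_j$, and the $-d$ from the APS index of $2\partial_{\bar z}$ with the baseline projection, computed via Grubb's formula. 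For the disc, your separation-of-variables plan is right, but the mechanism for $m\ge\Phi$ is not non-normalizability (the domain is bounded); it is that the unique radial zero mode $r^m e^{-\phi}$ satisfies the Neumann condition only when $m=\Phi$, and a Feynman--Hellmann argument in the field strength shows the lowest eigenvalue crosses zero with negative slope there. The $m<0$ sectors are handled by showing the effective potential $(m/r-\phi')^2-B\ge 0$ directly from the monotonicity of $B$, not by charge conjugation.
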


The result for the disc shows that the lower bound in Theorem~\ref{thm:main} is rather optimal in the case of the Neumann boundary condition (\(g=0\)). In the simply connected case,  we note that the conclusion of Theorem~\ref{thm:main} is non-empty only when \(\Phi-\Phi_g> 0\), which holds if for instance $B>0$ and we impose Neumann boundary conditions.  The reader should be wary of the fact that when $\Omega$ is not simply connected,  the lower bound in Theorem~\ref{thm:main} depends on $\Ab$ and not just on $B$. For $\Omega$ simply connected, both $N(\Hb_{\Ab,g},0)$ and the lower bound in Theorem~\ref{thm:main} only depend on $B$.

A consequence of Theorem~\ref{thm:main} is  the following magnetic isochoric inequality.  That this was worth recording separately was noted by V.\ Lotoreichik.

\begin{corollary}\label{corol.is-in}
Assume that we have a Neumann boundary condition,  \(g=0\),  and that \(D(0,R)\) is the disc with same area as \(\Omega\).  If \(\Omega\) is simply connected and the magnetic field \(B>0\) is constant,   then 
\[
    N(\Hb_{B,0},0)
    \geq
    N(\mathscr H_{B,0}^{D(0,R)},0),
\]
where $N(\Hb_{B,0},0)$ denotes the number of negative eigenvalues of $\Hb_{\Ab,0}$ for any smooth solution $\Ab$ to $B=\curl\Ab$.
\end{corollary}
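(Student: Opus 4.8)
The plan is to deduce Corollary~\ref{corol.is-in} from the two quantitative statements of Theorem~\ref{thm:main}, specialized to a constant field $B=\beta>0$ and Neumann boundary conditions $g=0$. For the disc $D(0,R)$ with the same area as $\Omega$, the constant radial field $B=\beta$ is trivially nonnegative and radially nonincreasing, so the third part of Theorem~\ref{thm:main} applies and gives the exact count
\begin{equation*}
    N(\mathscr H_{B,0}^{D(0,R)},0) = \AC{\Phi_{D(0,R)}},
    \qquad
    \Phi_{D(0,R)} = \frac{1}{2\pi}\int_{D(0,R)} \beta \dd x = \frac{\beta\, \abs{\Omega}}{2\pi},
\end{equation*}
where I have used $\pi R^2 = \abs{\Omega}$. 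For the original domain $\Omega$, since it is simply connected, the second part of Theorem~\ref{thm:main} (with $\Phi_g=0$) gives $N(\Hb_{B,0},0) \geq \AC{\Phi_\Omega}$. Here the crucial point is that $\Phi_\Omega = \frac{1}{2\pi}\int_\Omega \beta\dd x = \frac{\beta\,\abs{\Omega}}{2\pi} = \Phi_{D(0,R)}$, because the two domains have the same area and the field is the same constant. Hence both fluxes coincide, call the common value $\Phi$, and we obtain $N(\Hb_{B,0},0) \geq \AC{\Phi} = N(\mathscr H_{B,0}^{D(0,R)},0)$, which is exactly the claimed inequality.

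The only small point worth spelling out is the independence of the left-hand side on the choice of vector potential: since $\Omega$ is simply connected, any two smooth solutions $\Ab,\Ab'$ of $\curl\Ab=\curl\Ab'=\beta$ differ by a gradient $\nabla\varphi$ with $\varphi$ smooth on $\overline\Omega$, and the gauge transformation $u\mapsto \ee^{\ii\varphi}u$ is a unitary equivalence between $\Hb_{\Ab,0}$ and $\Hb_{\Ab',0}$ that preserves the Neumann boundary condition (as noted in the remark following Theorem~\ref{thm:main}, for simply connected $\Omega$ the counting function depends only on $B$). This justifies writing $N(\Hb_{B,0},0)$ unambiguously. I do not anticipate a genuine obstacle here: the corollary is essentially a bookkeeping consequence of Theorem~\ref{thm:main}, the one substantive observation being the area-matching identity $\pi R^2=\abs{\Omega}\Rightarrow \Phi_\Omega=\Phi_{D(0,R)}$ for a constant field, together with the fact that Theorem~\ref{thm:main} furnishes an inequality on one side and an equality on the other. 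If anything requires a line of care, it is simply recording that the disc indeed satisfies the hypotheses (radial, nonnegative, nonincreasing) of the equality case, which is immediate for a constant field.
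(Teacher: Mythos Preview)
Your proof is correct and follows exactly the approach implicit in the paper, which simply records the corollary as ``a consequence of Theorem~\ref{thm:main}'' without writing out a separate proof. The argument is precisely the one you give: equal areas with constant field force equal fluxes, the disc case of Theorem~\ref{thm:main} turns $\AC{\Phi}$ into an exact count, and the simply connected lower bound then yields the inequality.
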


\begin{remark}
The smoothness condition $B\in C^\infty(\overline{\Omega},\R)$ in Theorem~\ref{thm:main} can readily be relaxed to the condition $B \in L^p(\Omega,\R)$, $ p>2$. We restrict to smooth fields for ease of presentation but we return to discuss the subtleties arising for $B \in L^p(\Omega,\R)$ below in Remark~\ref{rem:lpb}. The only reason for the smoothness assumption on $\Omega$ is to allow us to invoke Atiyah--Patodi--Singer's index theorem;   for further discussion see Remark~\ref{rem:lowome}.
\end{remark}

We will provide two different proofs of the lower bound on the number of negative eigenvalues in Theorem~\ref{thm:main}. The key observation underlying the first proof is that the number of negative eigenvalues is bounded from below by the Fredholm index of the Pauli--Dirac operator equipped with Atiyah--Patodi--Singer boundary conditions. Our argument shows that the number of negative eigenvalues of the Robin realization of a square of a twisted Dirac operator on any compact Riemannian manifold with boundary is bounded from below by an Atiyah--Patodi--Singer index, and in dimension $2$ the index can be computed in explicit geometric and magnetic invariants. This approach does not require that our domain is simply connected.

The second proof exhibits a connection with a conservation law for the Benjamin--Ono equation on the real line and uses the Riemann mapping theorem (so it does require that our domain is simply connected). This conservation law has the form of a trace formula that relates spectral data of a certain linear operator to its coefficient.  

We believe that the two proofs are complementary
and demonstrate interesting links between index theory and the trace formula. 

\subsection{Application: The semi-classical magnetic Neumann Laplacian}  With Neumann boundary condition (\(g=0\)), with uniform magnetic field \(B\equiv 1\), and with a semi-classical parameter \(h\ll1\), we obtain interesting results on the semi-classical, magnetic Neumann Laplacian
\[
    \mathcal L_h
    \coloneqq
    (-\ii h\nabla-\Ab)^2.
\]

\begin{theorem}\label{thm:main*}
Assume that \(\Omega\) is bounded,  connected and smooth with \(d+1\) boundary components and that the magnetic field \(B\equiv 1\).  Let \(\Ab\) be a smooth solution to $\curl \Ab =1$.   Then the number of eigenvalues of \(\mathcal L_{h}\)  strictly below \(h\) satisfies
\begin{equation*}
    N(\mathcal L_{h},h)
    \underset{h\to0}{\sim}
    \frac{|\Omega|}{2\pi}h^{-1}.
\end{equation*}
\end{theorem}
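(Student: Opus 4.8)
**Proof proposal for Theorem~\ref{thm:main*}.**

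The plan is to establish matching upper and lower bounds for $N(\mathcal L_h,h)$, both asymptotic to $\frac{|\Omega|}{2\pi}h^{-1}$. Rescaling by $x\mapsto x/\sqrt h$, the operator $\mathcal L_h$ on $\Omega$ is unitarily equivalent to $h$ times the magnetic Neumann Laplacian with field $B\equiv 1$ on the dilated domain $\Omega_h := h^{-1/2}\Omega$, so counting eigenvalues of $\mathcal L_h$ below $h$ amounts to counting eigenvalues of this large-domain operator below $1$, i.e., below the first Landau level. The spin shift identity $(-\ii\nabla-\Ab)^2 = \Hb_{\Ab,0} + B$ (valid since $B\equiv 1$) shows that an eigenvalue of the Neumann Laplacian on $\Omega_h$ lies strictly below $1$ if and only if the corresponding eigenvalue of the Pauli component $\Hb_{\Ab,0}$ on $\Omega_h$ is strictly below $0$. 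Hence
\begin{equation*}
    N(\mathcal L_h,h) = N\big(\mathscr H^{\Omega_h}_{\Ab,0},0\big).
\end{equation*}

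\emph{Lower bound.} Apply Theorem~\ref{thm:main} to the domain $\Omega_h$ with $g=0$ and $B\equiv 1$. Its flux is $\Phi(\Omega_h) = \frac{1}{2\pi}|\Omega_h| = \frac{|\Omega|}{2\pi}h^{-1}$, and the number of boundary components $d+1$ is unchanged under scaling. The theorem gives
\begin{equation*}
    N\big(\mathscr H^{\Omega_h}_{\Ab,0},0\big) \ge -d + \sum_{j=0}^d \big\lceil \Phi_j(\Omega_h)\big\rceil \ge -d + \Phi(\Omega_h) = \frac{|\Omega|}{2\pi}h^{-1} - d,
\end{equation*}
using $\sum_j \lceil \Phi_j \rceil \ge \sum_j \Phi_j = \Phi$. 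Since $d$ is fixed, the right-hand side is $\big(1+o(1)\big)\frac{|\Omega|}{2\pi}h^{-1}$, which is the desired lower bound.

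\emph{Upper bound.} This is the part I expect to be the main obstacle, since it requires genuine semiclassical analysis rather than an application of an earlier result. The strategy is a Dirichlet--Neumann bracketing argument adapted to the magnetic setting: tile $\Omega$ by a grid of small squares of side $\ell$ with $h \ll \ell \ll 1$, distinguishing bulk squares (contained in $\Omega$) from boundary squares (meeting $\Gamma$). Relaxing to Neumann conditions on all the grid edges only increases the eigenvalue count, so $N(\mathcal L_h,h)$ is bounded above by the sum over cells of the local counts. On a bulk square the Neumann realization of $\mathcal L_h$ has, below $h$, only contributions from the lowest Landau band, whose local density of states is $\frac{1}{2\pi h}$ per unit area, giving $\big(1+o(1)\big)\frac{\ell^2}{2\pi h}$ eigenvalues per bulk cell after a careful analysis of the Neumann Landau Hamiltonian on a square (the standard reference is the magnetic Lieb--Thirring / coherent state machinery, cf.\ the cited works \cite{LSY,PhysRevB.51.10646,FK}); summing over bulk cells yields $\big(1+o(1)\big)\frac{|\Omega|}{2\pi h}$. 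The boundary cells, of which there are $O(\ell^{-1})$, must be shown to contribute only $o(h^{-1})$: each boundary cell carries at most $O(\ell/ h) + O(\ell^2/h)$ eigenvalues below $h$ (a one-dimensional ``edge'' count from the de~Gennes boundary states plus the bulk term), so their total is $O(\ell^{-1})\cdot O(\ell/h) = O(\ell h^{-1})= o(h^{-1})$ once we let $\ell\to 0$ after $h\to 0$. Combining the two bounds and sending $\ell\to0$ gives $N(\mathcal L_h,h)\le \big(1+o(1)\big)\frac{|\Omega|}{2\pi}h^{-1}$, matching the lower bound and completing the proof. The delicate points are the uniform control of the boundary-cell counts (the half-plane magnetic Neumann Laplacian has continuous spectrum starting at the de Gennes constant $\Theta_0 < 1$, so edge states below $1$ genuinely exist and must be counted, but their number per unit boundary length is $O(h^{-1/2})$, hence negligible after multiplying by the $O(1)$ total boundary length) and the interplay of the two scales $h\ll\ell\ll1$.
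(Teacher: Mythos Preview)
Your lower bound is correct and is essentially the paper's argument in a different guise: rather than rescaling the domain, the paper keeps $\Omega$ fixed and rescales the operator, writing $\mathcal L_h-h=h^{2}\,\Hb_{h^{-1}\Ab,0}$ so that $N(\mathcal L_h,h)=N(\Hb_{h^{-1}\Ab,0},0)$, and then applies Theorem~\ref{thm:main} with flux $\Phi=\frac{|\Omega|}{2\pi h}$. Your dilation $x\mapsto x/\sqrt h$ is unitarily equivalent to this, and the chain $\sum_j\AC{\Phi_j}\ge\sum_j\Phi_j=\Phi$ is exactly what is needed.

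For the upper bound you have the roles reversed. In the paper this is the \emph{known} half: it is not proved but simply quoted from~\cite[Thm.~1.4]{FK} (Riesz-mean asymptotics for the magnetic Neumann Laplacian on a general smooth domain) together with the standard Tauberian differentiation device~\cite[Thm.~III.5]{LS}. Your Dirichlet--Neumann bracketing sketch is a plausible heuristic, but as written it does not give an independent proof. The crucial step is the bulk-cell estimate $N_{\mathrm{cell}}(h)\le(1+o(1))\,\ell^{2}/(2\pi h)$ for a Neumann square, which is precisely Theorem~\ref{thm:main*} for squares; so you either have to cite the square case separately (the paper points to~\cite[Thm.~1.3]{KN}) or fall back on the very coherent-state/Lieb--Thirring machinery of~\cite{LSY,FK} that already handles $\Omega$ directly---in which case the bracketing layer buys nothing. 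There is also an arithmetic slip in your boundary estimate: $O(\ell^{-1})\cdot O(\ell/h)=O(h^{-1})$, not $O(\ell h^{-1})$; presumably you intended $O(\ell^{-1})\cdot O(\ell^{2}/h)=O(\ell/h)$ for the bulk part of the boundary cells plus $O(\ell^{-1})\cdot O(\ell h^{-1/2})=O(h^{-1/2})$ for the edge states, both of which are indeed $o(h^{-1})$.

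In short: your lower bound is fine and matches the paper; for the upper bound the paper takes the shortcut of citing existing literature, whereas your bracketing route ultimately relies on the same literature and contains a computational error, so it is longer without being more elementary. The genuine novelty of Theorem~\ref{thm:main*} is that Theorem~\ref{thm:main} supplies the lower bound, not the upper one.
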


\begin{proof}
The upper bound 
\[
    N(\mathcal L_{h},h)
    \leq
    \frac{|\Omega|}{2\pi}h^{-1} +o(h^{-1})
\]
in Theorem~\ref{thm:main*} is well known and can be obtained  from~\cite[Thm.~1.4]{FK} by a standard argument of differentiation  with respect to the spectral parameter~\cite[Thm.~III.5]{LS} (see~\cite[Sec.~8.3]{N} for more details). The lower bound is a direct consequence of Theorem~\ref{thm:main}\footnote{We start with \(\Hb_{h^{-1}B,0}\) with \(B\equiv 1\) and find that \(\mathcal L_h - h= h^{2}\Hb_{h^{-1}B,0}\).}.
\end{proof}

The asymptotics in Theorem~\ref{thm:main*} is consistent with a previous result for the square~\cite[Thm.~1.3]{KN}. 

\begin{remark}
In~\cite{Fr}, the second listed author proved that 
\[
    N(\mathcal L_{h},\lambda h)
    \underset{h\to0}{\sim}
    c(\lambda)|\Gamma| h^{-1/2}
\]
where \(\lambda\in(0,1)\) is a fixed constant and \(c(\lambda)\) is an explicit constant depending on \(\lambda\) in such a way that \(c(\lambda)\to+\infty\) when \(\lambda\to 1^-\). All the involved eigenfunctions are localized close to the boundary $\Gamma$. Meanwhile, for $\lambda \in (1,3)$ it is fairly standard to show that 
\[
    N(\mathcal L_{h},\lambda h)
    \underset{h\to0}{\sim}
    \frac{|\Omega|}{2\pi}h^{-1}.
\]
In this parameter region, the density of eigenfunctions is evenly distributed over the entire domain $\Omega$. However, it is not clear from standard methods what the correct value, or even the correct exponent of $h$, should be exactly at $\lambda=1$, corresponding to the first Landau level. Our Theorem~\ref{thm:main*} solves this problem with the conclusion that there is bulk behavior even at the limiting point $\lambda=1$.
\end{remark}

\begin{corollary}\label{main:cor*}
Let the assumptions be as in Theorem~\ref{thm:main*}.  Then the sum of eigenvalues of \(\mathcal L_{h}\)  strictly below \(h\) satisfies
\begin{equation}\label{eq:semiclasssum}
    \sum_{e_j < h} e_j(\mathcal L_h)
    \underset{h\to0}{\to}
    \frac{|\Omega|}{2\pi}.
\end{equation}
\end{corollary}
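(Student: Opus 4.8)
The plan is to turn the statement about $\sum_{e_j<h}e_j(\mathcal L_h)$ into a statement about a Riesz mean and then reduce that, in turn, to Theorem~\ref{thm:main*} together with one standard input on edge states. Denote by $0\le e_1(\mathcal L_h)\le e_2(\mathcal L_h)\le\cdots$ the eigenvalues of $\mathcal L_h$ (nonnegative, since $\mathcal L_h=(-\ii h\nabla-\Ab)^2$ is a square). The starting point is the elementary identity
\[
  \sum_{e_j<h}e_j(\mathcal L_h)
  =
  h\,N(\mathcal L_h,h)-\tr(h-\mathcal L_h)_+ ,
\]
which is immediate from $\tr(h-\mathcal L_h)_+=\sum_{e_j<h}(h-e_j)$. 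Since Theorem~\ref{thm:main*} gives $h\,N(\mathcal L_h,h)\to|\Omega|/(2\pi)$, the corollary is equivalent to showing that the Riesz mean $\tr(h-\mathcal L_h)_+$ tends to $0$ as $h\to0$.

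To estimate the Riesz mean I would use the layer-cake formula, which, because $\mathcal L_h\ge0$, reads
\[
  \tr(h-\mathcal L_h)_+=\int_0^h N(\mathcal L_h,t)\,\dd t .
\]
Fix $\lambda_0\in(0,1)$ and split this integral at $t=\lambda_0 h$. On $(\lambda_0 h,h)$ the monotonicity $N(\mathcal L_h,t)\le N(\mathcal L_h,h)$ together with Theorem~\ref{thm:main*} gives $\limsup_{h\to0}\int_{\lambda_0 h}^{h}N(\mathcal L_h,t)\,\dd t\le(1-\lambda_0)\,|\Omega|/(2\pi)$. On $(0,\lambda_0 h)$ I would bound $N(\mathcal L_h,t)\le N(\mathcal L_h,\lambda_0 h)$ and invoke the fact that no bulk states sit below the first Landau level, so that $N(\mathcal L_h,\lambda_0 h)=o(h^{-1})$; hence $\int_0^{\lambda_0 h}N(\mathcal L_h,t)\,\dd t\le\lambda_0 h\,N(\mathcal L_h,\lambda_0 h)\to0$. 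Combining the two bounds yields $\limsup_{h\to0}\tr(h-\mathcal L_h)_+\le(1-\lambda_0)\,|\Omega|/(2\pi)$, and since $\lambda_0\in(0,1)$ is arbitrary, letting $\lambda_0\uparrow1$ forces $\tr(h-\mathcal L_h)_+\to0$, which finishes the proof.

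The only ingredient beyond Theorem~\ref{thm:main*} is the bound $N(\mathcal L_h,\lambda_0 h)=o(h^{-1})$ for one fixed $\lambda_0<1$. This is much weaker than the sharp edge-state asymptotics $N(\mathcal L_h,\lambda h)\sim c(\lambda)|\Gamma|h^{-1/2}$ recalled in the Remark following Theorem~\ref{thm:main*} (from~\cite{Fr}), and it can equally be read off from~\cite[Thm.~1.4]{FK}, since the bulk Weyl term is absent below the first Landau level; in either case it is standard, so it is the one place where a citation rather than a short argument is needed. I do not expect a genuine obstacle here --- all the substance of the corollary is carried by Theorem~\ref{thm:main*} --- the only point demanding a little care is that the eigenvalues clustering just below $h$ account for essentially all of the mass of the Riesz mean, which is precisely what the cutoff at $\lambda_0 h$ isolates.
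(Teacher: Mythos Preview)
Your proof is correct and follows essentially the same strategy as the paper: both use the identity $\sum_{e_j<h}e_j=h\,N(\mathcal L_h,h)-\tr(h-\mathcal L_h)_+$, apply Theorem~\ref{thm:main*} to the first term, and show the Riesz mean vanishes using an external input on the spectrum below the first Landau level. The only difference is cosmetic: the paper directly cites the sharp asymptotics $\tr(h-\mathcal L_h)_+\sim C_1\frac{|\Gamma|}{2\pi}h^{1/2}$ from~\cite{FK}, whereas you derive $\tr(h-\mathcal L_h)_+\to0$ from the weaker counting bound $N(\mathcal L_h,\lambda_0 h)=o(h^{-1})$ via layer-cake and a cutoff at $\lambda_0 h$; your version uses a softer hypothesis at the cost of a few extra lines, but both ultimately rest on the same literature.
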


\begin{proof}
In~\cite{FK},  the sum of eigenvalues strictly below \(h\) were asymptotically computed as
\[
    \sum_{e_j < h} \big(h-e_j(\mathcal L_h)\big)
    \underset{h\to0}{\sim}
    C_1\frac{|\Gamma|}{2\pi}h^{1/2}.
\]
Here \(C_1\in (0,\infty)\) is the universal constant defined as 
\[
    C_1:=\int_0^\infty(1-\mu_1(\xi))\dd \xi,
\]
where $\mu_1(\xi)$ denotes the lowest eigenvalue of the de Gennes operator $-\partial_t^2+(t-\xi)^2$ on $L^2(\R_+)$ with Neumann conditions.
Combined with Theorem~\ref{thm:main*}, we deduce~\eqref{eq:semiclasssum}.
\end{proof}

In the case of the disc, we have seen in Theorem~\ref{thm:main} that \(N(\mathcal L_{h},h)=\frac{|\Omega|}{2\pi}h^{-1}+\mathcal O(1)\). Hence,  when \(\Omega=D(0,R)\), the statement of Corollary~\ref{main:cor*} can be refined to a two-term expansion of the sum of eigenvalues below \(h\),
\[
    \sum_{e_j < h} e_j(\mathcal L_h)
    \underset{h\to0}{=}
    \frac{|\Omega|}{2\pi}- C_1\frac{|\Gamma|}{2\pi} h^{1/2}+o(h^{1/2})=\frac{R^2}{2}-C_1Rh^{1/2}+o(h^{1/2}).
\]
To put this into perspective, we note that a two-term expansion for the number of eigenvalues below $\lambda h$ with $\lambda\neq 1,3,5,\ldots$ was proved in~\cite{cffh} for general domains in the case of Dirichlet boundary conditions. A similar argument should be possible in the case of Neumann boundary conditions. However, we do not see how to use these methods to obtain two-term asymptotics at $\lambda h$ with $\lambda = 1,3,5,\ldots$.  The above results address this for $\lambda=1$ in the case of the disc. 

\subsection{Fixing the gauge}

Given a magnetic field \(B\in C^{\infty}(\overline{\Omega};\R)\), we consider the unique magnetic scalar potential \(\phi\in C^\infty(\overline{\Omega})\) satisfying
\begin{equation}\label{eq:SuperPot}
	\begin{cases}
		\Delta\phi = B & \text{ in }\Omega,\\
		\phi = 0       & \text{ on }\Gamma.
	\end{cases}
\end{equation}
In terms of the magnetic scalar potential, the vector potential 
\begin{equation}
\label{eq:def-A}
	\Ab(x) = (-\partial_2\phi,\partial_1\phi)
\end{equation}
is a solution to the boundary value problem:
\begin{equation}\label{eq:curl-A=B}
    \curl\Ab = B,\qquad
    \Div\Ab = 0, \qquad
    \quad\text{and}\qquad
    \nu\cdot\Ab=0 \text{ on }\Gamma \,.
\end{equation}
If $\Omega$ is simply connected,~\eqref{eq:curl-A=B} defines $\Ab$ uniquely,  and any magnetic potential with magnetic field \(B\) is gauge equivalent to \(\Ab\).  
If $\Omega$ is not simply connected,   however,  there are magnetic potentials generating the magnetic field  \(B\) that are not gauge equivalent to \(\Ab\).

\begin{remark}
\label{generapotential}
Theorem~\ref{thm:main} is stated for a general magnetic potential,  but we will throughout the proof make the simplifying assumption that $\Ab$ is defined from~\eqref{eq:def-A}.  With the exception of Section~\ref{sec:index}, this assumption is purely cosmetic. A less conceptual approach to the results of Section~\ref{sec:index} extending to general $\Ab$ is discussed in Remark~\ref{rem:generalmagn}. 
\end{remark}

\subsection{Overview of the paper}

The paper is organized as follows. In Section~\ref{discandradial} we prove the second statement of Theorem~\ref{thm:main} concerning the disc by means of an explicit diagonalization of the operator by separation of variables.

We proceed in Section~\ref{sec:useful} with providing the key ingredient in the proof of Theorem~\ref{thm:main}: the integration by parts formula given in Proposition~\ref{prop:identity}. This formula links the spectrum of $\Hb_{\Ab,g}$ to a first-order elliptic operator on the boundary. Although the analogous formula is well-known in the case of Dirichlet boundary conditions where the boundary term vanishes, we believe the formula in the setting of Robin boundary conditions to be new. Both our proofs of the first part of Theorem~\ref{thm:main} rely on the formula in Proposition~\ref{prop:identity}. 

In the first proof of Theorem~\ref{thm:main}, the lower bound on $N(\Hb_{\Ab,g},0)$ is obtained from restricting the quadratic form of $\Hb_{\Ab,g}$ to (a fixed factor times) holomorphic functions. The details of this first lower bound are given in Sections~\ref{sec:Dirac} and~\ref{sec:eginnad}. The integration by parts formula of Proposition~\ref{prop:identity} provides the crucial link to Atiyah--Patodi--Singer index theory that we use to finalise the first proof of Theorem~\ref{thm:main} in Section~\ref{sec:index}. Since we are applying Atiyah--Patodi--Singer index theory to a twisted $\bar{\partial}$-operator in the complex plane, we can keep the presentation at a pedestrian level and apply the index theorem~\cite{APS} as a black box. The explicit nature of the Atiyah--Patodi--Singer index in Section~\ref{sec:index} will likely not suprise the experts in the field of index theory and spin geometry~\cite{boosss,ggrubb,lawmich}, but we nevertheless hope the direct methods can be appreciated.

The second proof of Theorem~\ref{thm:main} is given in Section~\ref{sec:traceproof} and relies on a conservation law for the Benjamin--Ono equation on the real line~\cite{KM}. This proof does not extend beyond simply connected domains but holds hope to extend to lower regularity. The second proof reduces the problem to the unit disc via the Riemann mapping theorem and to study a boundary operator that up to a conformal transformation appears in a Lax pair for the Benjamin--Ono equation. Here the results of~\cite{KM} applies.  The second proof is conceptually related to the first proof in that the conservation law from~\cite{KM} is, in some sense, an analogue for the Benjamin--Ono equation of Levinson's theorem -- an index theorem describing the number of bound states for Schrödinger operators.

We will throughout the paper use the variational principle, or min-max principle, in the form of Glazman's lemma; see, for instance,~\cite[Theorem 1.25]{FLW}. It says that for a selfadjoint, lower semibounded operator $T$ with corresponding quadratic form $t$ one has
\begin{equation}
\label{eq:glazman}
N(T,0) = \sup\{ \dim \mathcal M:\ t[\psi]< 0 \ \text{for all}\ 0\neq\psi\in\mathcal M \} \,,
\end{equation}
where $\mathcal M$ runs through subspaces of the form domain of $T$.

\section{The disc and radial magnetic fields}\label{discandradial}

In this section we work under the assumption that $\Omega=D(0,R)$, and that the magnetic field $B$ is given by a nonnegative radial function. We abuse the notation a bit and write here \(B(x)=B(|x|)\). It is convenient to switch to polar coordinates, \(x_1=r\cos\theta, x_2=r\sin\theta\). The magnetic vector potential $\Ab$ can be expressed as\footnote{This magnetic potential $\Ab$ is the one that satisfy~\eqref{eq:curl-A=B}.}
\[
    \Ab(r,\theta) = a(r)\,\eb_\theta
    \quad 
    \text{where} 
    \quad
    \eb_\theta
    =
    \begin{pmatrix*}[r]
        -\sin\theta\\ 
        \cos\theta
    \end{pmatrix*}
    \quad
    \text{and}
    \quad
    a(r) = \frac1r \int_0^rB(\rho)\rho \dd \rho .
\]
We will write $a(r) = \phi'(r)$, where $\phi$ is the radial function defined by~\eqref{eq:SuperPot} (in the case of the disc).

We restate the second part of Theorem~\ref{thm:main}, which we are about to prove. We recall that \(\Hb_{\Ab,0}\) denotes the Neumann realization that we are interested in.

\begin{proposition}\label{prop:disc-radial}
Let \(\Omega=D(0,R)\), and assume that the magnetic field \(B = B(r)\) is radial, nonnegative and nonincreasing, with flux \(\Phi\). Then
\[
    N\bigl(\Hb_{\Ab,0},0\bigr) 
    = 
    \AC{\Phi}.
\]
\end{proposition}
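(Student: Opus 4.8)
The plan is to diagonalize $\Hb_{\Ab,0}$ by separation of variables in polar coordinates and then to analyze each fiber operator. Writing $u(r,\theta)=\sum_{m\in\Z}f_m(r)\ee^{\ii m\theta}$, the quadratic form $q_{\Ab,0}$ decouples into a direct sum over $m$, and one is led to study the family of radial operators acting (after the usual unitary $f\mapsto r^{1/2}f$ on $L^2((0,R),\dd r)$, or keeping the weight $r\dd r$) by
\[
    \mathfrak h_m = -\frac1r\frac{\dd}{\dd r}\Bigl(r\frac{\dd}{\dd r}\Bigr) + \Bigl(\frac{m}{r}-a(r)\Bigr)^2 - B(r),
\]
with a Neumann condition at $r=R$ and the natural integrability/regularity condition at $r=0$. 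The flux is $\Phi = \frac{1}{2\pi}\int_\Omega B = a(R)\,R$ (since $a(R)R = \int_0^R B(\rho)\rho\,\dd\rho$), and the boundary term vanishes because $g=0$. So the content of the proposition is the claim that $\mathfrak h_m$ has exactly one negative eigenvalue when $\Phi>m$ (equivalently $m\ge 0$ and $a(R)R>m$) and no negative eigenvalue otherwise; summing the multiplicities over $m$ then gives $\#\{m\in\Z: \Phi>m\} = \max\{\lceil\Phi\rceil - \max(1,?),\ldots\}$ — more precisely, the number of integers $m\ge 0$ with $m<\Phi$, which is exactly $\AC{\Phi}$ when $\Phi>0$ (and $0$ when $\Phi\le 0$). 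I would record this elementary counting identity carefully, including the boundary case $\Phi\in\Z$.

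The heart of the argument is the spectral analysis of the single fiber $\mathfrak h_m$. For the \emph{upper bound} $N(\mathfrak h_m,0)\le 1$, the natural tool is a factorization / supersymmetry argument: on radial functions, $[\sigma\cdot(-\ii\nabla-\Ab)]^2$ restricted to angular momentum $m$ can be written as $D_m^* D_m$ for a first-order operator $D_m = \frac{\dd}{\dd r} + W_m(r)$ with $W_m(r) = \frac{m}{r} - a(r) + (\text{curvature term from } r^{1/2})$, so that the only way to produce a negative eigenvalue is through the boundary term generated when integrating by parts at $r=R$ (the Neumann condition is not the natural one for the factorized operator). This boundary defect has rank one, which forces $N(\mathfrak h_m,0)\le 1$; a clean way to see the rank-one bound is via Glazman's lemma~\eqref{eq:glazman} together with the observation that $\mathfrak h_m$ restricted to functions vanishing at $R$ (i.e.\ the Dirichlet fiber) is nonnegative, being a genuine square, and Neumann differs from Dirichlet by a rank-one perturbation in the resolvent sense. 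For the \emph{existence} of a negative eigenvalue when $\Phi>m\ge 0$, I would exhibit an explicit trial function in the form domain — the natural candidate is (a cutoff of) the zero-mode $r^{m}\ee^{-\phi(r)}$ of the Aharonov--Casher type, which is annihilated by $D_m$ in the interior, so that $q(u)$ collapses to the boundary term $-\int_{\Gamma} (\text{something}) <0$; here the hypotheses $B\ge 0$ (so $\phi$, hence $a$, behaves monotonically) and $B$ nonincreasing are used to control the sign and to guarantee that $r^m\ee^{-\phi}$ is the decaying solution and lies in the form domain. When $m<0$ or $0\le m\le \Phi$ fails one shows positivity of $\mathfrak h_m$ directly: rewrite $q$ via the factorization picking the \emph{other} sign of the superpotential (use $r^{|m|}\ee^{+\phi}$ as the ground-state weight) so that the boundary term comes out with a favorable sign exactly when $m\ge\Phi$, and for $m<0$ note $\bigl(\frac{m}{r}-a(r)\bigr)^2 \ge \bigl(\frac{m}{r}\bigr)^2$ dominates $B$ after an integration by parts — here again monotonicity of $B$ enters.

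The main obstacle I anticipate is the \emph{sharp} existence/non-existence dichotomy at the fiber level, i.e.\ pinning down precisely for which $m$ the operator $\mathfrak h_m$ crosses zero, and in particular handling the threshold case and the sign of the decisive boundary term. The factorization identity is robust, but the sign of the residual boundary contribution $u(R)\overline{(D_m u)(R)}$ depends delicately on which superpotential one factors with and on the monotonicity of $B$ through $a(r)$ versus $a(R)$; getting a single clean inequality that simultaneously yields "negative eigenvalue exists iff $m<\Phi$" is where the hypotheses $B\ge0$ and $B' \le 0$ must be exploited in full. Once the fiber dichotomy is established, assembling the direct sum and evaluating $\sum_m N(\mathfrak h_m,0) = \AC\Phi$ is a short bookkeeping step.
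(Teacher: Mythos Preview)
Your plan is correct and follows the same skeleton as the paper (Fourier decomposition in $\theta$, then analysis of each radial fiber $\mathfrak h_m$), but your fiber analysis for $m\ge 0$ takes a different and in fact shorter route. The paper solves $\mathfrak h_m u=0$ explicitly, observes that the only admissible solution $r^m\ee^{-\phi}$ satisfies the Neumann condition iff $m=\Phi$, then scales $B\mapsto\beta B$ to get an analytic family $\mathfrak h_m(\beta)$ and uses Feynman--Hellmann to show the ground state $\lambda(\beta)$ crosses zero transversally at $\beta=m/\Phi$, from above to below; since any zero mode is positive it must be the ground state, so no higher eigenvalue can cross. Your factorization approach bypasses this perturbation argument: the radial instance of Proposition~\ref{prop:identity} reads $q_m(f)=\int_0^R|f'+(a-m/r)f|^2\, r\,\dd r+(m-\Phi)|f(R)|^2$, from which nonnegativity for $m\ge\Phi$, strict negativity on $f=r^m\ee^{-\phi}$ for $0\le m<\Phi$, and the rank-one bound $N(\mathfrak h_m,0)\le 1$ all follow immediately. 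In particular your ``other factorization with weight $r^{|m|}\ee^{+\phi}$'' is unnecessary; the single identity above already handles both signs of $m-\Phi$.

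For $m<0$ your sketch is on the right track but the phrasing is off: it is not $(m/r)^2$ that dominates $B$, and no integration by parts is involved. The paper uses a direct pointwise potential bound: since $a\ge 0$ and $-m\ge 1$,
\[
\Bigl(\tfrac{m}{r}-a\Bigr)^2-B\;\ge\;\tfrac{2a(r)}{r}-B(r)=\tfrac{2}{r^2}\int_0^r\bigl(B(\rho)-B(r)\bigr)\rho\,\dd\rho\;\ge\;0,
\]
the last inequality by monotonicity of $B$. This is the only place the nonincreasing hypothesis is actually used.
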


\begin{proof}
We perform the usual Fourier series decomposition, and study the family \(\Hm\) of operators in \(L^2((0,R); r\dd r)\), acting as
\[
    \Hm
    =
    -\frac{\dd^2}{\dd r^2} - \frac{1}{r} \frac{\dd}{\dd r} + \left(\frac{m}{r} - \phi'(r)\right)^2 - B(r),
\]
and with Neumann boundary condition at \(r=R\). 
Then $\Hb_{\Ab,0}$ is unitarily equivalent to the orthogonal sum $\oplus_{m \in {\mathbb Z}} \Hm$, and we can count the negative eigenvalues of \(\Hb_{\Ab,0}\) as
\[  
    N\bigl(\Hb_{\Ab,0},0\bigr) =\sum_{m\in\Z} N\bigl(\Hm,0\bigr).
\]
It suffices to show that each \(\Hbm_{\beta}\geq0\) for all \(m<0\), and that for non-negative \(m\) they satisfy
\[
    N(\Hm,0) = 
    \begin{cases}
        1, & 0 \leq m < \Phi, \\
        0, & 0 \leq \Phi \leq m.
    \end{cases}
\]

We first show that \(\Hbm_{\beta}\geq0\) for all \(m<0\). 
For this we use the fact that 
\[
    \phi'(r)= a(r) = \frac1r\int_0^rB(\rho)\rho \dd\rho \geq 0.
\]
Therefore, for \(m<0\),
\[ 
    \left(\frac{m}{r} - \phi'(r)\right)^2 - B(r) \geq \frac{2}{r}\phi'(r) - B(r) = \frac{2}{r^2}\int_0^r \bigl(B(\rho)-B(r)\bigr)\rho\dd \rho.
\]
Since \(B\) is nonincreasing by assumption, this is bounded from below by \(0\).

We next look for zero eigenvalues of \(\Hm\) for \(m\geq 0\). The general solution to the differential equation \(\Hm u=0\) is given by
\[
    u(r) = r^m \ee^{-\phi}\Bigl(c_1 + c_2 \int_r^R \rho^{-1-2m}\ee^{2\phi(\rho)}\dd \rho\Bigr) \qquad (m\geq 0).
\]
To have a solution in the form domain of \(\Hm\), we must take \(c_2=0\), since that term and its derivative are too singular as \(r \to 0^+\). Differentiating, we find (with \(c_1=1\) and \(m\geq 0\))
\[
    u'(R) 
    = 
    \left(\frac{m}{R} - \phi'(R)\right) R^m \ee^{-\phi(R)}.
\]
We recall further that \(\phi(R)=0\), and that \(\phi'(R)=\Phi/R\) by the formula for \(\phi'\) above. Therefore, \(u'(R) = (m-\Phi)R^{m-1}\). It follows that \(u\) satisfies the Neumann boundary condition if, and only if, \(m = \Phi\).

Let us next introduce a variation of the magnetic field. If we replace \(B\) by \(\beta B\) and \(\phi\) by \(\beta\phi\), then we also replace \(\Hm\) by an analytic family of operators \(\Hm(\beta)\), which according to the calculations we just did have a zero eigenvalue precisely when \(\beta = m/\Phi\). This eigenvalue must be the lowest one of each operator, since the corresponding eigenspaces are spanned by \(r^me^{-\beta\phi}\), with constant sign. The calculation also shows that the eigenvalue is necessarily simple. We let  \(u_m\) be a normalized such eigenfunction, and also denote by \(\lambda(\beta)\) the lowest eigenvalue of \(\Hm(\beta)\). The Feynman--Hellmann formula gives
\[
   \lambda'(\beta) = -\int_0^R \left[2\left(\frac{m}{r}-\beta\phi'(r)\right)\phi'(r) + B(r)\right] |u_m|^2 r\dd r.
\]
In particular, for \(\beta = m/\Phi\) the term inside the square brackets equals
\[
    \frac{2m}{\Phi r}\left(\Phi-r\phi'(r)\right)\phi'(r) + B(r)
    =
    \frac{2 m \phi'(r)}{\Phi r}\int_r^R B(\rho)\rho\dd\rho + B(r).
\]
Since $B\geq 0$ and $\phi'\geq 0$, this expression is $\geq 0$. Moreover, it is not identically zero, unless $B$ is identically equal to zero. Since $|u_m|^2$ is positive in $(0,R]$, we deduce that at \(\beta=m/\Phi\), where \(\Hm(\beta)\) has a zero eigenvalue, the derivative \(\lambda'(m/\Phi) < 0 \), unless $B\equiv 0$. This implies that if \(\Phi > m\) then \(\Hm\) has precisely one negative eigenvalue and if \(\Phi\leq m\) it has no negative eigenvalues. Note that the latter conclusion is correct in the trivial case $B\equiv 0$ as well.
\end{proof}

\begin{remark}
The condition that \(B\) should be nonincreasing looks artificial, but it might not be possible to remove. For \(m=-1\), \(R=1\), \(B(r)=(r-1/2)^2+\delta\), with \(\delta>0\), the potential \((\frac{m}{r}-\phi'(r))^2-B(r)\) is no longer positive. For \(r=1\) we get the value \(1-4/(1+12\delta)^2\), which is negative as long as \(\delta<1/12\). The operator might still be bounded from below by zero.
\end{remark}

\begin{remark}
It would be interesting to also count the number of eigenvalues below higher Landau levels. We will then get contributions from both components of the Pauli operator. In the case of a disc with a uniform magnetic field, the calculations can be done explicitly, at least for the second Landau level. We hope to come back to the problem for more general magnetic fields and domains.
\end{remark}

\section{A useful identity for the quadratic form}\label{sec:useful}

We present in this section an identity that links the study of the Schr\"odinger operator \(\Hb_{\Ab,g}\) to a boundary operator. The corresponding identity in the case of \(\Omega = \R^2\) has been used extensively before, for example in the proof of the Aharonov--Casher theorem~\cite{AC}  or in~\cite{ervo}, where it was used to define the Pauli operator for very singular magnetic fields. In the case of bounded domains \(\Omega\), it has been used in the case of Dirichlet boundary conditions~\cite{ekp}, but there the boundary term, which is crucial for us, vanishes. 

For each  $u\in H^1(\Omega)$, we introduce the current
\begin{equation*}
    \jb_\Ab(u)=\re \IP[\big]{ u,(-\ii\nabla - \Ab)u }_{\C}\coloneqq \begin{pmatrix}
        \re\IP[\big]{ u,(-\ii\partial_{x_1} - A_1)u }_{\C}\\[2pt]
        \re\IP[\big]{ u,(-\ii\partial_{x_2} - A_2)u }_{\C}
    \end{pmatrix}.
\end{equation*}
If we write $u=\rho \, \ee^{i\varphi}$ in polar form, which is possible locally away from the zeros of $u$, then we observe that \(\jb_B(u)=\rho^2(\nabla\varphi - \Ab)\).

In the following we denote the norm and inner product in $L^2(\Omega)$ by $\norm{\cdot}$ and $\IP{\cdot,\cdot}$ respectively, with the convention that the inner product is linear in the first entry. We also identify \(x=(x_1,x_2)\in\R^2\) with \(z=x_1+\ii x_2\in\C\), and introduce the Cauchy--Riemann operators (Wirtinger derivatives)
\begin{equation*}
    \partial_{\bar z}
    =
    \frac12\left(\partial_1+\ii\partial_2\right),
    \quad 
    \partial_{z}
    =
    \frac12\left(\partial_1-\ii\partial_2\right).
\end{equation*}
We assume that the magnetic potential \(\Ab\) is the one introduced in~\eqref{eq:def-A}.
\begin{proposition}\label{prop:identity}
The quadratic form \(q_{\Ab,g}\), corresponding to \(\Hb_{\Ab,g}\), can be written as
\[
    q_{\Ab,g}(u)=4\norm{\ee^{-\phi}\partial_{\bar z}\ee^{\phi}u}^2+\int_{\Gamma} \left(g|u|^2+ \tau\cdot\jb_{\Ab}(u)\right)\dd s(x),
    \qquad(u\in H^1(\Omega)),
\]
where the $\jb_{\Ab}(u)$-term in the boundary integral is the pairing of $H^{1/2}(\Gamma)$ and \(H^{-1/2}(\Gamma)\).
\end{proposition}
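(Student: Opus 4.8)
The plan is to expand the Dirichlet-type energy on the right-hand side using the product rule for the Wirtinger derivative and then recognize the resulting bulk terms as the magnetic gradient energy, collecting the cross terms into a divergence that produces the boundary integral. First I would compute $\ee^{-\phi}\partial_{\bar z}(\ee^{\phi}u)=\partial_{\bar z}u+(\partial_{\bar z}\phi)u$. Writing $\Ab=(-\partial_2\phi,\partial_1\phi)$ as in~\eqref{eq:def-A}, one checks the key algebraic identity
\begin{equation*}
    \partial_{\bar z}u+(\partial_{\bar z}\phi)u=\tfrac12\big[(\partial_1-\ii A_1)u+\ii(\partial_2-\ii A_2)u\big]=-\tfrac{\ii}{2}\big[(-\ii\partial_1-A_1)u+\ii(-\ii\partial_2-A_2)u\big],
\end{equation*}
since $\partial_{\bar z}\phi=\tfrac12(\partial_1\phi+\ii\partial_2\phi)=\tfrac12(-A_2+\ii A_1)=\tfrac{\ii}{2}(A_1+\ii A_2)$. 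Hence, abbreviating $D=(D_1,D_2)=(-\ii\nabla-\Ab)$, we get $4|\ee^{-\phi}\partial_{\bar z}\ee^{\phi}u|^2=|D_1u|^2+|D_2u|^2-2\,\im\langle D_1u,D_2u\rangle_{\C}$ (with an appropriate sign; this sign bookkeeping, together with the orientation of $\tau$, is where I would be most careful).

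Next I would integrate the cross term over $\Omega$. The point is that $-2\im\langle D_1u,D_2u\rangle_{\C}$, after integration, is a boundary term: writing it as $\im\big(\partial_2(\bar u D_1 u)-\partial_1(\bar u D_2 u)\big)$ plus a term involving $\partial_1 A_2-\partial_2 A_1=B$ and the commutator of $D_1,D_2$, one finds the bulk $B$-contributions cancel and Green's formula yields $\int_\Gamma \tau\cdot\jb_{\Ab}(u)\,\dd s$ — here one uses that $\im(\bar u D_j u)=\re\langle u,D_j u\rangle_{\C}$ is exactly the $j$-th component of $\jb_{\Ab}(u)$, and that the boundary one-form $(\im(\bar u D_1u),\im(\bar u D_2u))$ integrated against the outward/inward normal via Green becomes the tangential pairing $\tau\cdot\jb_\Ab(u)$ for the correctly oriented $\tau$. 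Adding the term $4\|\ee^{-\phi}\partial_{\bar z}\ee^{\phi}u\|^2$ back, the bulk part reassembles into $\int_\Omega|Du|^2$, and since $\curl\Ab=B$ one recovers $\int_\Omega(|Du|^2-B|u|^2)$ precisely when the commutator term $[D_1,D_2]=\ii B$ is accounted for — the $-B|u|^2$ in $q_{\Ab,g}$ comes out of this commutator, not from the boundary manipulation.

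To make the boundary pairing rigorous at the stated $H^{1/2}$–$H^{-1/2}$ duality level rather than assuming $u\in H^2$, I would first establish the identity for $u\in C^\infty(\overline\Omega)$ (or $H^2(\Omega)$) by the computation above, where all integrations by parts are classical, and then pass to general $u\in H^1(\Omega)$ by density: both sides are continuous in the $H^1(\Omega)$-topology, the left-hand side obviously, and the right-hand side because $u\mapsto\tau\cdot\jb_\Ab(u)$ extends continuously from $H^1(\Omega)$ to $H^{-1/2}(\Gamma)$ (the trace of the normal-type component of a current of an $H^1$ function lies in $H^{-1/2}$, paired against $\tau\in C^\infty(\Gamma)$, and $|u|^2\in H^{1/2}(\Gamma)$ via the trace theorem and the algebra property).

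The main obstacle I anticipate is not analytic but bookkeeping: getting every sign and orientation convention consistent — the sign in the Wirtinger expansion, the choice that $\nu$ points \emph{inward} while $(\tau,\nu)$ is a direct frame, the sign of $[D_1,D_2]$, and the direction in Green's formula — so that the cross term lands as $+\tau\cdot\jb_\Ab(u)$ with exactly the coefficient $1$ and the bulk term as $-B|u|^2$ with coefficient $1$. I would fix these once and for all by testing the identity on a simple explicit example (e.g. $\Omega$ a disc, $B$ constant, $u$ a Gaussian-type function or a monomial times $\ee^{-\phi}$) before trusting the general computation. Everything else — the product rule, Green's theorem on a smooth domain, and the density argument — is routine.
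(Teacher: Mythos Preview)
Your proposal is correct and follows essentially the same route as the paper: expand $\ee^{-\phi}\partial_{\bar z}\ee^{\phi}u$ in terms of the magnetic derivatives $D_j=-\ii\partial_j-A_j$, use the commutator $[D_1,D_2]=\ii B$ to produce the $-B|u|^2$ bulk term, integrate by parts to get the tangential boundary current, and extend by density from $H^2(\Omega)$ (or $C^\infty(\overline\Omega)$) to $H^1(\Omega)$. One small slip: with $\Ab=(-\partial_2\phi,\partial_1\phi)$ you have $\partial_1\phi=A_2$ and $\partial_2\phi=-A_1$, so $\partial_{\bar z}\phi=\tfrac12(A_2-\ii A_1)$ rather than $\tfrac12(-A_2+\ii A_1)$; your final expression $-\tfrac{\ii}{2}(D_1u+\ii D_2u)$ is nonetheless correct, and as you anticipated this is pure bookkeeping.
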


\begin{proof}
By density it suffices to consider the case $u \in H^2(\Omega)$.
Let  $X_1=-\ii\partial_1 - A_1$ and $X_2=-\ii\partial_2 - A_2$,  where $\Ab=(A_1,A_2)=(-\partial_2\phi,\partial_1\phi)$, and let $v=\ee^{\phi}u$. Then
\begin{equation}\label{eq:identity}
    4\int_\Omega \ee^{-2\phi}\abs{\partial_{\bar z}v}^2 \dd x\\
    =\norm{X_1u}^2 + \norm{X_2u}^2
    -\ii\big(\IP{ X_1u, X_2u }-\IP{ X_2u, X_1u }\big).
\end{equation}
Integration by parts yields
\[
    \begin{aligned}
        \IP{ X_1u,X_2u } 
        &= 
        \IP{ u,X_1X_2u } + \ii\int_\Gamma u\,\overline{(-\ii\partial_2 - A_2)u}\,\nu_1\,\dd s(x),\\
        \IP{ X_2u, X_1u },
        &= 
        \IP{ u, X_2X_1u } + \ii\int_\Gamma u\,\overline{(-\ii\partial_1 - A_1)u}\,\nu_2\,\dd s(x),
    \end{aligned} 
\]
where $\nu=(\nu_1,\nu_2)\coloneqq \mathrm J\tau$ is the unit inward normal of $\Gamma$ and $\mathrm  J = \begin{psmallmatrix*}[r]0&-1\\1&0\end{psmallmatrix*}$.  To finish the proof, we notice that
\[
    X_1X_2-X_2X_1=\ii (\partial_1A_2-\partial_2A_1)=\ii B\,,
\]
and we take the real part in~\eqref{eq:identity}.
\end{proof}

\begin{remark}[Boundary twisted Dirac operator]\label{rem:bd-op}
We denote by \(L=|\Gamma|\) the length of  \(\Gamma\), and by \(s\) the arc-length coordinate along \(\Gamma\), so that
\[
    \tau\cdot(-\ii\nabla - \Ab)|_{\Gamma}=-\ii\partial_s - A_\tau
\]
is a twisted one-dimensional Dirac operator. Here \(A_\tau(s)\coloneqq \tau\cdot\Ab|_{\Gamma}\) is  \(L\)-periodic, and  the boundary term in Proposition~\ref{prop:identity} can be expressed as follows
\[
    g|u|^2 + \tau\cdot\jb_{\Ab}(u)
    =
    \IP{ (-\ii\partial_s - A_\tau+g)u,u}_{\C}.
\]
\end{remark}

\begin{remark}[The  disc and uniform magnetic field]\label{rem:disc}
Assume that $\Omega=D(0,1)$ and $B=1$. Consider the functions
\begin{equation*}
    v_{m}(z)
    =
    z^m
    \quad 
    (z=x_1+\ii x_2\,\text{ and }m\in \N \cup \{0\})\,.
\end{equation*}
Then \(\partial_{\bar z}v_m=0\) and, with \(u_m = \ee^{-\phi}v_m\),
\[
    \int_{\Gamma} \tau\cdot\jb_\Ab(u_m)\dd s(x)
    =
    \int_0^{2\pi}\ee^{\ii m\theta}\overline{\left(-\ii\frac{\dd}{\dd\theta}-\frac{1}{2}\right)\ee^{\ii m\theta}}\dd\theta
    =
    2\pi\left(m-\Phi\right).
\]
\end{remark}

\begin{remark}[The second component of the Pauli operator]\label{rem:identity-c}
Consider the quadratic form
\[
    q_{\Ab,g}^c(u)
    \coloneqq
    \int_\Omega\left(|(-\ii\nabla-\Ab)u|^2 + B|u|^2\right)\dd x
    +
    \int_\Gamma g|u|^2\dd s(x)
\]
corresponding to the Robin realization of the second component of the Pauli operator, \((-\ii\nabla - \Ab)^2 + B\). We have an analogous identity to the one in Proposition~\ref{prop:identity},
\[ 
    q_{\Ab,g}^c(u)
    =
    4\norm{\ee^{\phi}\partial_{z}\ee^{-\phi}u}^2
    +
    \int_{\Gamma} \left(g|u|^2 - \tau\cdot\jb_{\Ab}(u)\right)\dd s(x).
\]
This could be of interest for instance in case of strictly negative magnetic fields.
\end{remark}

\section{Boundary twisted  Dirac operator}\label{sec:Dirac}

Let us assume  that \(\Gamma\) is connected (corresponding to \(\Omega\) being simply connected).  We will discuss later the modifications when \(\Gamma\) has several connected components (see Remark~\ref{rem:Dirac-dc} below). 

Let $V\in C(\Gamma,\R)$. We will later make different choices of \(V\), in terms of \(A_\tau\), \(g\), and the curvature \(\kappa\). We consider the twisted Dirac operator on \(L^2(\Gamma)\) defined via the arc-length coordinate \(s\in[0,L)\) along \(\Gamma\) as follows,
\begin{equation}\label{eq:def-Dirac-V}
    \Dd_V=-\ii\partial_s+V,
\end{equation}
with domain \(H^1(\Gamma)=\{u\in H^1\big((0,L)\big), ~u(0)=u(L)\}\). Here \(L\) denotes the length of \(\Gamma\). The spectrum of \(\mathscr D_V\) is known explicitly.

\begin{proposition}\label{speckkned}
The spectrum of $\Dd_V$ is given by the simple, discrete eigenvalues
\[
    \mu_m
    =
    \mu_m(V,L)
    \coloneqq 
    \frac{2\pi}{L}\left(m+\Phi_V\right) ,\quad m\in\Z,
\]
where
\[
    \Phi_V=\frac1{2\pi}\int_\Gamma V\dd s.
\]
Moreover,  the corresponding eigenfunctions
\[
    f_{m,V}(s)
    \coloneqq 
    L^{-1/2}\exp\left(\ii \mu_m(V,L) s-\ii\int_0^s V(\varsigma)\dd \varsigma \right),
\]
constitute a Hilbert basis of \(L^2(\Gamma)\).
\end{proposition}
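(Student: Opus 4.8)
The plan is to diagonalise $\Dd_V$ directly by solving the first-order eigenvalue ODE. Writing $\Dd_V u = \mu u$ as $u' = \ii(\mu - V)u$, one sees that for each fixed $\mu \in \R$ the solution space is one-dimensional, spanned by $s \mapsto \exp\big(\ii\mu s - \ii \int_0^s V(\varsigma)\dd\varsigma\big)$. Such a function lies in the domain $H^1(\Gamma)$, i.e.\ satisfies the periodicity constraint $u(0) = u(L)$, exactly when $\exp\big(\ii\mu L - \ii\int_0^L V\big) = 1$, that is when $\mu L - 2\pi\Phi_V \in 2\pi\Z$, which rearranges to $\mu = \mu_m(V,L) = \tfrac{2\pi}{L}(m + \Phi_V)$ for some $m \in \Z$. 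This already shows that the point spectrum of $\Dd_V$ is exactly $\{\mu_m : m \in \Z\}$, each eigenvalue simple since the ODE solution space is one-dimensional, with normalised eigenfunction obtained by fixing the multiplicative constant to $L^{-1/2}$; as $|f_{m,V}| \equiv L^{-1/2}$ this indeed gives $\norm{f_{m,V}}_{L^2(\Gamma)} = 1$, and one recovers the stated formula for $f_{m,V}$.

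Next I would verify that $\{f_{m,V}\}_{m \in \Z}$ is a Hilbert basis of $L^2(\Gamma)$. Orthonormality is immediate from $\mu_m - \mu_n = \tfrac{2\pi}{L}(m - n)$, since $\IP{f_{m,V}, f_{n,V}} = L^{-1}\int_0^L \exp\big(\tfrac{2\pi\ii}{L}(m-n)s\big)\dd s = \delta_{mn}$. For completeness, the one point worth care is that $V$ cannot be gauged away inside the operator: multiplication by $\exp\big(-\ii\int_0^s V\big)$ fails to preserve the periodic boundary condition unless $\Phi_V \in \Z$, and this flux obstruction is precisely what survives in the spectrum. One argues instead at the Hilbert-space level: for $g \in L^2(\Gamma)$ one computes $\IP{g, f_{m,V}} = L^{-1/2}\int_0^L \tilde g(s)\exp\big(-\tfrac{2\pi\ii m s}{L}\big)\dd s$ where $\tilde g(s) \coloneqq g(s)\exp\big(\ii\int_0^s V\big)\exp\big(-\tfrac{2\pi\ii}{L}\Phi_V s\big)$ lies in $L^2(0,L)$; thus the numbers $\IP{g, f_{m,V}}$ are, up to the factor $L^{-1/2}$, the Fourier coefficients of $\tilde g$, and their simultaneous vanishing forces $\tilde g = 0$, hence $g = 0$, a.e. Completeness of $\{f_{m,V}\}$ therefore reduces to completeness of the standard exponential system on an interval.

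Finally, since $\Dd_V$ with domain $H^1(\Gamma)$ is self-adjoint — being the standard self-adjoint operator $-\ii\partial_s$ on the circle perturbed by the bounded real multiplication operator $V$ — and we have produced a complete orthonormal system of eigenfunctions with real eigenvalues, this is the full spectral decomposition; in particular the spectrum is purely discrete and equals $\{\mu_m : m \in \Z\}$. The whole argument is elementary, and the only genuine obstacle is the bookkeeping of the flux twist just mentioned, which is best handled by reading off completeness from unitarity at the Hilbert-space level rather than from any (nonexistent) gauge trivialisation of the operator itself.
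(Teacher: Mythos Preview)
Your proof is correct and follows essentially the same approach as the paper: both solve the first-order eigenvalue ODE explicitly and impose the periodicity condition to read off the quantisation $\mu = \tfrac{2\pi}{L}(m+\Phi_V)$. Your write-up is in fact more complete than the paper's, which records only the eigenvalue computation and leaves orthonormality and completeness of $\{f_{m,V}\}$ implicit; your reduction of completeness to standard Fourier theory via the unitary multiplication by $\exp\big(\ii\int_0^s V - \tfrac{2\pi\ii}{L}\Phi_V s\big)$ fills that in cleanly.
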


\begin{proof}
The spectrum of $\Dd_V$ is characterized by the scalars $\mu$ for which there is a non-zero solution to 
\[
    \begin{cases}
        -\ii f'(s)+V(s)f(s)=\mu f(s),& s\in (0,L),\\
        f(0)=f(L).
    \end{cases}
\]
The general solution to the previous differential equation is 
\[
    f(s)
    =
    C\exp\left(\ii\mu s-\ii\int_0^s V(\varsigma)\dd \varsigma \right).
\]
Therefore, a non-zero solution satisfying \(f(0)=f(L)\) exists if and only if there is an integer $m\in \Z$ such that 
\[
    \mu L-\int_0^L V(\varsigma)\dd \varsigma
    =
    2\pi m.
\]
The previous condition on \(\mu\) reads as follows
\[
    \mu
    =
    \mu_m
    \coloneq
    \frac{2\pi}{L}\left(m+\frac{1}{2\pi}\int_0^L V(s)\dd s\right) = \frac{2\pi}{L}\left(m + \Phi_V\right).
    \qedhere
\]
\end{proof}

\begin{remark}\label{rem:op-D-V}~
\begin{enumerate}
\item It follows from Proposition~\ref{speckkned} that
\begin{equation*}
    \dim \big( \Ker (\Dd_V)\big)
    =
    \begin{cases}
        0, & \text{if }\Phi_V\not\in\Z,\\
        1, & \text{if }\Phi_V\in\Z.
    \end{cases}
\end{equation*}

\item We will encounter \(\Dd_V\) in two typical situations where either
\begin{itemize}
\item \(V= -\tau\cdot\Ab|_{\Gamma}+g  \) and \(\Phi_V=-\Phi+\Phi_g\); or
\item\(V= -\tau\cdot\Ab|_{\Gamma}+g -\kappa \), with \(\kappa\) being the curvature along \(\Gamma\),  and \(\Phi_V=-\Phi+\Phi_g+1\).
\end{itemize}

\item If \(V_1,V_2\in C(\Gamma;\R)\) such that \(\Phi_{V_2}-\Phi_{V_1}\in\Z\),  then \(\Dd_{V_1}\) and \(\Dd_{V_2}\) have the same spectra but the eigenfunctions of \(\Dd_{V_2}\) are modified from those of \(\Dd_{V_1}\) by a pure phase term.
\end{enumerate}
\end{remark}

We define the spectral projection (in \(L^2(\Gamma))\)
\begin{equation}\label{eq:def-proj}
    \Pi_V\coloneqq \Pi_V(0)
    = 
    \mathbf 1_{(-\infty, 0)}(\Dd_V),
    \qquad
    \Pi_V^{^\leq} \coloneqq \Pi_V^{^\leq} (0)
    =
    \mathbf 1_{(-\infty, 0]}(\Dd_V), 
\end{equation}
on the eigenspace of \(\Dd_V\) corresponding to the negative (resp. non-positive) eigenvalues.

\begin{lemma}
\label{charpiv}
Whenever $V\in C(\Gamma,\R)$ we define
\[
    \Theta_V(s)
    \coloneqq
    \frac{2\pi}{L}\Phi_Vs-\int_0^sV(\varsigma)\dd\varsigma.
\]
For any $\alpha\in \R$, the spectral projections
\[
    \Pi_V(\alpha)
    \coloneqq
    \mathbf 1_{(-\infty, \alpha)}(\Dd_V),
\]
are of the form 
\begin{equation}\label{ideinfintofpiv}
    \Pi_V(\alpha)
    =
    \ee^{\ii\Theta_V}\Pi_0\left(\alpha-\frac{2\pi}{L}\Phi_V\right)\ee^{-\ii\Theta_V}.
\end{equation}
Moreover, if $V\in C^\infty(\Gamma,\R)$ then $\Pi_V(\alpha)$ is a classical pseudodifferential operator of order zero and the principal symbol of $\Id - \Pi_V(\alpha)$ coincides with that of the Calder\'on projector of $\ee^{-\phi}\partial_{\bar{z}}\ee^\phi$.
\end{lemma}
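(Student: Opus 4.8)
\textbf{Proof proposal for Lemma~\ref{charpiv}.}

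The plan is to prove the three assertions in order, since each one builds on the previous. First I would establish the conjugation formula~\eqref{ideinfintofpiv}. The starting point is the explicit eigenbasis from Proposition~\ref{speckkned}: the eigenfunctions $f_{m,V}$ can be written as $f_{m,V}(s)=L^{-1/2}\ee^{\ii(2\pi m/L)s}\ee^{\ii\Theta_V(s)}$, because $\mu_m(V,L)s-\int_0^sV = (2\pi m/L)s + \Theta_V(s)$ by the very definition of $\Theta_V$. Hence $f_{m,V}=\ee^{\ii\Theta_V}f_{m,0}$, i.e. multiplication by $\ee^{\ii\Theta_V}$ is the unitary on $L^2(\Gamma)$ intertwining the eigenbasis of $\Dd_0$ with that of $\Dd_V$. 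Since $\Dd_V(\ee^{\ii\Theta_V}u)=\ee^{\ii\Theta_V}(\Dd_0 u + \tfrac{2\pi}{L}\Phi_V u)$ — which one checks directly from $-\ii\Theta_V' = -\ii(2\pi\Phi_V/L - V) = V - 2\pi\Phi_V/L$, or simply by comparing eigenvalues $\mu_m(V,L)=\mu_m(0,L)+\tfrac{2\pi}{L}\Phi_V$ — the spectral subspace of $\Dd_V$ below $\alpha$ is the image under $\ee^{\ii\Theta_V}$ of the spectral subspace of $\Dd_0$ below $\alpha-\tfrac{2\pi}{L}\Phi_V$, which is exactly~\eqref{ideinfintofpiv}. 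Note $\Theta_V$ is $L$-periodic precisely because $\Theta_V(L)=2\pi\Phi_V-\int_0^LV=0$, so $\ee^{\ii\Theta_V}$ is a well-defined smooth function on $\Gamma$ when $V$ is smooth.

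Next, for the pseudodifferential statement, I would first treat $\Pi_0(\beta)=\mathbf 1_{(-\infty,\beta)}(-\ii\partial_s)$. On $L^2(\Gamma)\cong L^2(\R/L\Z)$ with Fourier basis $\ee^{\ii(2\pi m/L)s}$, the operator $-\ii\partial_s$ has symbol $\xi$ (in the discrete/periodic sense) and $\Pi_0(\beta)$ is the Fourier multiplier cutting off to frequencies $<\beta$; this is a classical $\Psi$DO of order $0$ on the circle with principal symbol $\mathbf 1_{\{\xi<0\}}$ — independent of $\beta$, since only the sign of $\xi$ at infinity matters. Conjugating by the smooth nonvanishing function $\ee^{\ii\Theta_V}$ preserves the class of classical order-zero $\Psi$DOs and does not change the principal symbol (conjugation by a $0$-th order multiplier acts trivially on the leading symbol), so by~\eqref{ideinfintofpiv} $\Pi_V(\alpha)$ is a classical $\Psi$DO of order $0$ with principal symbol $\mathbf 1_{\{\xi<0\}}$, hence $\Id-\Pi_V(\alpha)$ has principal symbol $\mathbf 1_{\{\xi\geq 0\}}=\mathbf 1_{\{\xi>0\}}$ (the two differ only on a measure-zero set, irrelevant for symbols).

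For the final clause I would compute the principal symbol of the Calderón projector of the elliptic operator $P\coloneqq \ee^{-\phi}\partial_{\bar z}\ee^\phi$ on $\Omega$ and compare. Since $\ee^{\pm\phi}$ are smooth, $P$ has the same principal symbol as $\partial_{\bar z}=\tfrac12(\partial_1+\ii\partial_2)$, with symbol $\tfrac{\ii}{2}(\xi_1+\ii\xi_2)$ (or $\tfrac12(\ii\xi_1-\xi_2)$). The Calderón projector onto boundary values of interior solutions of $P u=0$ is a classical order-zero $\Psi$DO on $\Gamma$ whose principal symbol, in boundary normal coordinates, is the projection onto the "stable" root: writing the cotangent variable as $\xi'\tau^*+\zeta\nu^*$ along $\Gamma$ and solving $\tfrac12\big(\ii(\xi'\tau+\zeta\nu)\big)$-type equations in the inward normal direction, the characteristic equation for the normal frequency $\zeta$ has the single root $\zeta=\ii\xi'$ (resp. $-\ii\xi'$, depending on orientation conventions), and the Calderón projector's principal symbol is $\mathbf 1$ when that root has the decaying sign and $0$ otherwise. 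Tracking the orientation so that $(\tau,\nu)$ is a direct frame with $\nu$ inward — the same convention fixed in the statement of Proposition~\ref{prop:identity} and Remark~\ref{rem:bd-op}, under which $\tau\cdot(-\ii\nabla-\Ab)|_\Gamma=-\ii\partial_s-A_\tau$ is the relevant boundary symbol of $\ee^{-\phi}\partial_{\bar z}\ee^\phi$ — one gets precisely $\mathbf 1_{\{\xi'>0\}}$, matching the principal symbol of $\Id-\Pi_V(\alpha)$ computed above.

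The main obstacle is the last step: pinning down the Calderón projector's principal symbol with the correct orientation and sign, so that it genuinely equals $\mathbf 1_{\{\xi>0\}}$ rather than its complement. This is a bookkeeping matter — identifying the boundary cotangent variable $\xi$ of $\Dd_V$ with the tangential cotangent variable of $P$, and checking which half-plane the decaying normal solutions of $\bar\partial$ occupy under the inward-normal convention — but it is where all the sign conventions of the paper (direction of $\tau$, inward $\nu$, Wirtinger $\partial_{\bar z}$) must be made consistent. The cleanest route is to factor $\partial_{\bar z}$ near the boundary in geodesic normal coordinates $(s,t)$ with $t$ the inward distance, so that $\partial_{\bar z}$ acquires leading part $\tfrac12(\partial_t - \ii\partial_s)$ up to lower order and a conformal factor, read off that $Pu=0$ forces (to leading order) $u$ to be of the form $\ee^{-t\xi}$ with $\xi>0$ on the Fourier side in $s$, and conclude that the boundary values are supported on $\{\xi>0\}$ — which is exactly the range of $\Id-\Pi_V(\alpha)$ to principal order.
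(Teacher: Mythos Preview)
Your proof of the conjugation formula~\eqref{ideinfintofpiv} is correct and essentially identical to the paper's: both use that $f_{m,V}=\ee^{\ii\Theta_V}f_{m,0}$ from Proposition~\ref{speckkned} to obtain the unitary equivalence.

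For the pseudodifferential assertion and the comparison with the Calder\'on projector, your argument is correct but follows a different route from the paper. You compute principal symbols abstractly: you identify the principal symbol of $\Pi_0(\beta)$ as $\mathbf 1_{\{\xi<0\}}$, note that conjugation by a smooth nonvanishing multiplier preserves this, and then separately compute the principal symbol of the Calder\'on projector for $\partial_{\bar z}$ via the decaying-root analysis in boundary normal coordinates. The paper instead proceeds concretely: after reducing to $V=0$ and (by a further unitary conjugation) to $\alpha\in(-2\pi/L,0]$, it sums the geometric series to show directly that $\Id-\Pi_0(\alpha)$ \emph{is} the Cauchy integral operator
\[
(\Id-\Pi_0(\alpha))f(\ee^{\ii s})=\lim_{r\to 1^-}\frac{1}{L}\int_0^L\frac{f(\ee^{\ii\zeta})}{1-r\ee^{\ii(s-\zeta)}}\dd\zeta,
\]
which is already known to be a classical order-zero $\Psi$DO coinciding with the Calder\'on projector up to lower order. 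The paper's route is shorter and more elementary (no general Calder\'on symbol calculus, no orientation bookkeeping), while yours is more conceptual and would transfer more readily to other first-order elliptic operators. Your acknowledged ``main obstacle'' of tracking the sign conventions is exactly what the paper's explicit identification sidesteps.
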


\begin{proof}
By Proposition~\ref{speckkned},  the eigenfunctions of \(\Dd_V\) can be written in the form
\[
    f_{m,V}(s)=\ee^{\ii \Theta_V(s) }f_{m,0}(s).
\]
We conclude~\eqref{ideinfintofpiv} from the equalities
\[
\ee^{-i\Theta_V}\Pi_V\left(\alpha\right)\ee^{i\Theta_V}=
    \sum_{\substack{m<L\alpha/2\pi -\Phi_V\\m\in\Z}}\langle \cdot,f_{m,0}\rangle f_{m,0}=\Pi_0\left(\alpha-\frac{2\pi}{L}\Phi_V\right).
\]
We will now show that $1-\Pi_V(\alpha)$ is a classical pseudodifferential operator of order zero when $V\in C^\infty(\Gamma,\R)$. By~\eqref{ideinfintofpiv}, it suffices to consider the case $V=0$. Conjugating by the unitary $u(s):=\ee^{2 \pi i N s/L}$ we can reduce to the case $\alpha\in (-\frac{2\pi}{L},0]$. In this case, a computation with the geometric series shows that for $f\in C^\infty(\Gamma)$
\begin{equation}\label{compoar}
    (1-\Pi_0(\alpha))f(\ee^{is})
    =
    \lim_{r\to 1^-}\frac{1}{L}\int_0^L \frac{f(\ee^{i\zeta})}{1-r\ee^{i(s-\zeta)}}\dd\zeta.
\end{equation}
The arc length parametrization defines a diffeomorphism $\Gamma\cong \frac{L}{2\pi }S^1$ allowing us to identify $C^\infty(\Gamma)$ with $C^\infty(\frac{L}{2\pi }S^1)$. Under this identification,~\eqref{compoar} shows that $1-\Pi_0(\alpha)$ is the classical pseudodifferential operator of order zero on the circle defined from the Cauchy integral that up to lower order terms coincides with the Calder\'on projector.
\end{proof}

Recall that \((\tau,\nu)\) is an orthonormal direct frame along \(\Gamma\) such that \(\nu\) is the inward pointing normal.  Recall that the curvature along \(\Gamma\) is the scalar function \(\kappa\) defined by the relation
\begin{equation*}
    \dot\tau = \kappa\nu,
\end{equation*}
where the dot denotes differentiation with respect to the arc-length coordinate.  

We introduce the Cartesian components of \(\tau,\nu\) as follows
\[
    \tau=(\tau_1,\tau_2),\quad \nu=(\nu_1,\nu_2)
\]
and recall that the relation \(\nu=\begin{psmallmatrix*}[r]0&-1\\1&0\end{psmallmatrix*}\tau\) reads as follows
\begin{equation}\label{eq:nu=J-tau}
    \nu_1 = -\tau_2,
    \quad 
    \nu_2 = \tau_1.
\end{equation}
There is a nice duality between projections of the type \(\Pi_V\) and \(\Pi_V^{^\leq} \) that we explain below.
\begin{proposition}\label{prop:duality-proj}
Let  \(w\in L^2(\Gamma)\).  Then it holds that
\[
    w\in \big(\im(\Pi_{V-\kappa})\big)^\perp
\]
if, and only if, the function \(h\coloneqq (\nu_1-\ii\nu_2)w\) satisfies
\[
    h\in \big(\im(\Pi_V)\big)^\perp.
\] 
\end{proposition}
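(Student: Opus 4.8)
\textbf{Proof plan for Proposition~\ref{prop:duality-proj}.}

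The plan is to reduce everything to the explicit description of the eigenfunctions of $\Dd_V$ and $\Dd_{V-\kappa}$ provided by Proposition~\ref{speckkned}, and to show that multiplication by $\nu_1-\ii\nu_2$ intertwines the two bases up to a reindexing. First I would observe that, writing $\nu=(\nu_1,\nu_2)$ as a unit vector field along $\Gamma$, one may write $\nu_1-\ii\nu_2=\ee^{-\ii\vartheta(s)}$ for a real function $\vartheta$ on $\R$ (not necessarily $L$-periodic), where $\vartheta$ is an argument of the normal direction. Differentiating the defining relations $\dot\tau=\kappa\nu$ and the frame identities~\eqref{eq:nu=J-tau}, one gets $\dot\nu=-\kappa\tau$, hence $\dot\vartheta=\kappa$ (with the sign fixed by our orientation convention for $(\tau,\nu)$). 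The key global fact is that $\int_0^L\kappa\,\dd s=2\pi$ for a single smooth closed curve (by the theorem of turning tangents / Hopf's Umlaufsatz), so $\vartheta(s+L)=\vartheta(s)+2\pi$ and therefore $\ee^{-\ii\vartheta}$ \emph{is} $L$-periodic, i.e. multiplication by $\nu_1-\ii\nu_2$ is a well-defined unitary on $L^2(\Gamma)$.

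Next I would compute directly how conjugation by this unitary acts on $\Dd_V$. We have, on $H^1(\Gamma)$,
\[
    (\nu_1-\ii\nu_2)^{-1}\,\Dd_V\,(\nu_1-\ii\nu_2)
    =
    \ee^{\ii\vartheta}(-\ii\partial_s+V)\ee^{-\ii\vartheta}
    =
    -\ii\partial_s+V-\dot\vartheta
    =
    -\ii\partial_s+V-\kappa
    =
    \Dd_{V-\kappa}.
\]
Thus multiplication by $h\mapsto(\nu_1-\ii\nu_2)h$ is a unitary equivalence $\Dd_{V-\kappa}\to\Dd_V$, and consequently it carries $\mathbf 1_{(-\infty,0)}(\Dd_{V-\kappa})=\Pi_{V-\kappa}$ to $\mathbf 1_{(-\infty,0)}(\Dd_V)=\Pi_V$; equivalently it carries $\big(\im(\Pi_{V-\kappa})\big)^\perp=\im\big(\mathbf 1_{[0,\infty)}(\Dd_{V-\kappa})\big)$ onto $\big(\im(\Pi_V)\big)^\perp$. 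Spelling this out: $w\in\big(\im(\Pi_{V-\kappa})\big)^\perp$ iff $w$ lies in the span of the non-negative eigenfunctions of $\Dd_{V-\kappa}$, iff $(\nu_1-\ii\nu_2)w$ lies in the span of the non-negative eigenfunctions of $\Dd_V$, iff $h=(\nu_1-\ii\nu_2)w\in\big(\im(\Pi_V)\big)^\perp$, which is exactly the claim. (Since $0$ is either not an eigenvalue or a simple eigenvalue of each operator, and the conjugation respects the eigenvalue $0$ as well, there is no ambiguity about where the kernel goes; the statement is phrased with strict inequality $\Pi_V=\mathbf 1_{(-\infty,0)}$, and the argument respects this verbatim.)

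The main obstacle is purely the bookkeeping around the phase $\vartheta$: one must be careful that $\ee^{-\ii\vartheta}$ is genuinely single-valued on $\Gamma$ (this is where the total curvature $2\pi$ enters and where the connectedness of $\Gamma$ — i.e. simple-connectedness of $\Omega$, the standing assumption of this section — is used), and that the sign of $\dot\vartheta=\kappa$ matches the orientation conventions fixed earlier for $(\tau,\nu)$ and for $\kappa$ via $\dot\tau=\kappa\nu$. Once the identity $(\nu_1-\ii\nu_2)^{-1}\Dd_V(\nu_1-\ii\nu_2)=\Dd_{V-\kappa}$ is established, the statement about spectral projections is immediate from the functional calculus, and no further analysis (e.g. no pseudodifferential machinery) is needed.
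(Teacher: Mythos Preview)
Your proof is correct and follows essentially the same approach as the paper: both represent $\nu_1-\ii\nu_2$ as a unit phase with $\dot\vartheta=\kappa$, use that the total curvature is $2\pi$ to ensure periodicity, and conclude that multiplication by this phase intertwines $\Dd_{V-\kappa}$ and $\Dd_V$. The only difference is packaging: the paper tracks the explicit eigenfunctions $f_{m,V}$ and shows $\nu f_{m,V}$ is a scalar multiple of $f_{m+1,V-\kappa}$, whereas you go directly through the operator conjugation identity and functional calculus, which is a bit cleaner but not a genuinely different argument.
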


Here and below we denote by $\im(T)$ the range of a linear operator $T$. 

\begin{proof}
We identify the vectors \(\tau\) and \(\nu\) with the complex number \(\tau_1+\ii\tau_2\) and \(\nu_1+\ii \nu_2\), respectively. The identities in~\eqref{eq:nu=J-tau} then take the form
\[ 
    \dot\tau=\ii\kappa \tau,
\]
which yields that 
\[
    \tau(s)
    =
    \tau(0)\exp\left(\ii\int_0^s \kappa(\varsigma)\dd \varsigma\right),
    \quad 
    \nu(s) = i \tau(s)
    =
    \ii\tau(0)\exp\left(\ii\int_0^s \kappa(\varsigma)\dd \varsigma\right).
\]
Assume that \(h\in \big(\im(\Pi_V)\big)^\perp\). Then, by Proposition~\ref{speckkned}, 
\[
    h\perp f_{m,V} \quad (m\in\Z,\,\mu_m(V,L)<0).
\]
In other words, $\langle w, \tilde f_{m,V}\rangle=0$, where $\tilde f_{m,V}(s)=\nu f_{m,V}(s)$.
Since \(\int_\Gamma\kappa\dd s=2\pi\) by the Gauss--Bonnet theorem,  it is easy to check that
\[
    \mu_m(V,L)=\mu_{m+1}(V-\kappa,L)
\]
and
\[
    \begin{aligned}
        \tilde f_{m,V}(s) 
        & = \ii \tau(0) L^{-1/2}\exp\left(\ii\mu_{m+1}(V-\kappa,L) s
        - \ii\int_0^s \big(V(\varsigma)
        - k(\varsigma)\big)\dd \varsigma\right)\\
        & = \ii \tau(0) f_{m+1,V-\kappa}(s).
    \end{aligned}
\]
This proves that 
\[
    w\perp f_{m+1,V-\kappa}\qquad(m\in\Z,\mu_{m+1}(V-\kappa,L)<0),
\]
which yields that \(w\in \big(\im(\Pi_{V-\kappa})\big)^\perp\).
In a similar fashion, we can prove that \(h\in \big(\im(\Pi_V)\big)^\perp\) if we know that \(w\in \big(\im(\Pi_{V-\kappa})\big)^\perp\).
\end{proof}

\begin{remark}\label{rem:Dirac-dc}
In the  case where \(\Gamma\) consists of \(d+1\) curves \((\Gamma_j)_{j=0}^d\), with \(d\geq1\) (which corresponds to \(\Omega\) non-simply connected),  we can view the Hilbert space \(L^2(\Gamma)\) as the direct sum \(\dsum L^2(\Gamma_j)\) and introduce the operators \(\mathcal D_{V_j}\) on \(L^2(\Gamma_j)\cong L^2([0,L_j))\),  where we  denote by \(L_j\) the length of \(\Gamma_j\).
We introduce the operator \(\mathcal D_V\) on \(L^2(\Gamma)\) as the direct sum \(\dsum  \mathcal D_{V_j}\).  We then have
\begin{itemize}
\item The spectrum of \(\mathcal D_V\) is \(\{\mu_m(V_j,L_j)\}_{m\in\Z,0\leq j\leq d}\),  the union of the spectra of \(\mathcal D_{V_j}\) described in Proposition~\ref{speckkned},  and an orthonormal basis of eigenfunctions is given by \(\{f_{m,V_j}\}_{m\in\Z,0\leq j\leq d}\),  where \(f_{m,V_j}\),  defined initially on \(\Gamma_j\),  is viewed in \(L^2(\Gamma)\) by extension by \(0\).

\item The spectral projection \(\Pi_V=\mathbf 1_{(-\infty,0)}(\Dd_V)\) is also given as the direct sum
\[
    \Pi_V=\dsum \Pi_{V_j}
\]

\item Proposition~\ref{prop:duality-proj} continues to hold.
\end{itemize}
\end{remark}

\section{Eigenmodes with holomorphic/anti-holomorphic conditions}\label{sec:eginnad}

\subsection{New auxiliary operators}
\label{subsec:newaux}

Let us introduce the trace operator
\begin{equation*}
    \gamma_0: u\in H^1(\Omega) \mapsto u|_{\Gamma}\in H^{1/2}(\Gamma)
\end{equation*}
and the tangential component of the magnetic potential in~\eqref{eq:def-A}, 
\begin{equation}\label{eq:def-a}
    A_\tau = \tau\cdot \Ab|_{\Gamma}.
\end{equation}
The trace operator $\gamma_0$ has been implicit in the presentation so far, but for clarity it will be important to include it in formulas going forward. In light of Proposition~\ref{prop:identity} and Remark~\ref{rem:bd-op}, we have the following identity,
\begin{equation}\label{eq:def-q-1}
    q_{\Ab,g}(u)
    =
    4\|\ee^{-\phi}\partial_{\bar{z}}\ee^{\phi}u\|^2_{L^2(\Omega)}+\int_{\Gamma} (-\ii\partial_s-A_\tau+g)(\gamma_0u)\overline{\gamma_0u} \dd s,
\end{equation}
for \(u\in H^1(\Omega)\).  The boundary term in~\eqref{eq:def-q-1} is  the pairing of $(-\ii\partial_s-A_\tau +g)(\gamma_0 u)\in H^{-1/2}(\Gamma)$ with $\gamma_0 u\in H^{1/2}(\Gamma)$.  Let us denote by \(\mathcal O(\Omega)\) the space of holomorphic functions in \(\Omega\). The Bergman space
\[
    \mathscr B(\Omega)
    \coloneqq
    \mathcal{O}(\Omega) \cap L^2(\Omega)
\]
is a closed subspace of \(L^2(\Omega)\), and the quadratic form
\begin{equation*}
    \mathscr B(\Omega) \cap  H^1(\Omega)
    \ni v
    \mapsto 
    q_{\Ab,g}^{\mathcal O}(v)
    \coloneqq 
    \int_{\Gamma} (-\ii\partial_s-A_\tau +g)(\gamma_0 v)\overline{\gamma_0 v} \dd s
\end{equation*}
is semi-bounded and closed\footnote{It inherits these properties from \(q_{\Ab,g}\), see~\eqref{eq:identity}.}, so it induces a self-adjoint operator, \(\mathscr H_{\Ab,g}^{\mathcal O}\), in the  Hilbert space \(\mathscr B(\Omega)\). 

Let us observe that, by~\eqref{eq:def-q-1} and the property \(\phi=0\) on \(\Gamma\),  we have the identity,
\[
    q_{\Ab,g}(u)=\int_{\Gamma} (-\ii\partial_s-A_\tau +g)(\gamma_0 u)\overline{\gamma_0 u} \dd s
    =
    q^{\mathcal O}_{\Ab,g}(\ee^{\phi} u), 
    \quad u\in H^1(\Omega)\cap\ee^{-\phi}\mathcal{O}(\Omega).
\]
Thus, by Glazman's lemma~\eqref{eq:glazman},   
\begin{equation}\label{eq:N->N-O}
    N(\Hb_{\Ab,g},0)
    \geq 
    N(\mathscr H^{\mathcal O}_{\Ab,g},0),
\end{equation}
and the question of finding a lower bound on the number of negative eigenvalues of \(\Hb_{\Ab,g}\),  becomes to bound \(N(\mathscr H^{\mathcal O}_{\Ab,g},0)\) from below.  We shall see that the latter quantity will be bounded from below by the index of a certain Atiyah--Patodi--Singer realization of the Pauli--Dirac operator (see Remark~\ref{rem:index}).

There is another operator in $L^2(\Gamma)$ obtained from imposing the anti-holomorphic condition. Consider the closed and semi-bounded quadratic form defined on $H^1(\Omega)$ as follows
\begin{equation}\label{conjnjnad}
    {q}_{\Ab,g}^c(u)
    =
    4\|\ee^{\phi}\partial_{z}\ee^{-\phi}u\|^2_{L^2(\Omega)}
    +
    \int_{\Gamma} (\ii\partial_s+A_\tau +g)(\gamma_0u)\overline{\gamma_0u} \dd s,
\end{equation}
(note the $\partial_{z}$ in~\eqref{conjnjnad} instead of $\partial_{\bar z}$ in~\eqref{eq:def-q-1})
and denote by \(\Hbc_{\Ab,g}\) its corresponding self-adjoint operator.  Note that, by Remark~\ref{rem:identity-c} this corresponds to the second component of the Pauli operator in~\eqref{eq:Pauli}; in fact
\[
    \Hbc_{\Ab,g} = (-i\nabla-\Ab)^2+B
\]
with the Robin boundary condition in~\eqref{eq:bc-Robin}.  

We can encounter negative eigenvalues of \((-i\nabla-\Ab)^2+B\) corresponding to eigenmodes, $u$, of the quadratic form such that $\mathrm{e}^{-\phi}u$ is antiholomorphic in $\Omega$.  More precisely, we denote by \(\bar{\mathcal O}(\Omega)\)  the space of anti-holomorphic functions in \(\Omega\) and consider the closed, semibounded quadratic form defined as
\begin{equation}\label{eq:def-qf-H-op-c}
    \bar{\mathscr B}(\Omega)
    \cap 
    H^1(\Omega)
    \ni 
    v
    \mapsto
    \mathfrak \mathfrak{q} ^{\bar{\mathcal O}}_{\Ab,g}(v)\coloneqq \int_{\Gamma} (\ii\partial_s+A_\tau +g)(\gamma_0 v)\overline{\gamma_0 v} \dd s,
\end{equation}
where $\bar{\mathscr{B}}(\Omega)=\bar{\mathcal O}(\Omega)\cap L^2(\Omega)$ is the space of square integrable, anti-holomorphic functions. 
Denote by \(\Hcc\) the self-adjoint operator in the Hilbert space \(\bar{\mathscr{B}}(\Omega)\),  corresponding to the foregoing quadratic form.   
The identity in Remark~\ref{rem:identity-c} and Glazman's lemma~\eqref{eq:glazman} yield, as above,
\[
    N(\Hbc_{\Ab,g},0)
    \geq
    N(\Hcc,0).
\]

\subsection{Example: The case of the disc}
Let us illustrate how an explicit computation of \(N(\mathscr H^{\mathcal O}_{\Ab,g},0)\) is more feasible than the number of negative eigenvalues of \(\Hb_{\Ab,g}\). We discuss here the case of the unit disc in further detail; its symmetric nature makes it more tractable for computations.

For the sake of simplicity, we impose a Neumann boundary condition, i.e., we set \(g=0\). Combining Proposition~\ref{prop:disc-radial} and Remark~\ref{rem:disc},  we get the following proposition.

\begin{proposition}\label{prop:disc-cst}
If the magnetic field, \(B\),  is a non-negative constant and \(\Omega\) is the open  unit disc, \(D(0,1)\), then 
\[
    N (\Hb_{\Ab,0},0)
    =
    N(\mathscr H^{\mathcal O}_{\Ab,0},0)
    =
    \AC{\Phi},
    \quad
    N(\mathscr H^{\bar{\mathcal O}}_{\Ab,0},0_+)
    = 
    \begin{cases} 
        1 & \text{if } B=0,\\
        0 & \text{if } B>0,
    \end{cases}
\]
where \(\AC{\cdot}\) is introduced in~\eqref{eq:def-K} and \(\Hb_{\Ab,0}\) is the operator with a Neumann boundary condition.
\end{proposition}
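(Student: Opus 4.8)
The plan is to reduce the three claimed identities to explicit one-dimensional computations, using the separation of variables from Section~\ref{discandradial} together with the boundary reformulation of the quadratic forms in Section~\ref{sec:eginnad}. The first equality, $N(\Hb_{\Ab,0},0)=\AC{\Phi}$, is exactly Proposition~\ref{prop:disc-radial} specialized to the constant field $B\equiv\beta\geq0$ (which is radial, nonnegative and trivially nonincreasing), so nothing new is needed there. For the middle equality I would invoke the identity $q_{\Ab,0}(u)=q^{\mathcal O}_{\Ab,0}(\ee^\phi u)$ valid on $H^1(\Omega)\cap\ee^{-\phi}\mathcal O(\Omega)$ from Section~\ref{subsec:newaux}: this already gives $N(\Hb_{\Ab,0},0)\geq N(\mathscr H^{\mathcal O}_{\Ab,0},0)$, but for the disc I want equality, so I must exhibit an explicit orthonormal basis of $\mathscr B(\Omega)$ that diagonalizes $\mathscr H^{\mathcal O}_{\Ab,0}$. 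The natural basis is $v_m(z)=z^m$, $m\geq0$, and by the computation in Remark~\ref{rem:disc} (with $\Phi=\beta/2$) one gets $q^{\mathcal O}_{\Ab,0}(v_m)=2\pi(m-\Phi)\|$normalization$\|^2$, so the $m$-th mode is negative precisely when $m<\Phi$; summing gives $\AC{\Phi}$ negative eigenvalues. To be careful I should check the monomials $\{z^m\}_{m\ge0}$ are orthogonal in $L^2(D(0,1))$ (they are: $\int z^m\bar z^n=0$ for $m\neq n$ by angular integration) and span $\mathscr B(D(0,1))$, and that the form $q^{\mathcal O}_{\Ab,0}$ is diagonal in this basis — which follows because the boundary operator $-\ii\partial_s-A_\tau$ acts diagonally on the boundary traces $\ee^{\ii m\theta}$ and distinct Fourier modes are $L^2(\Gamma)$-orthogonal.

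For the third equality I would run the mirror-image argument for $\mathscr H^{\bar{\mathcal O}}_{\Ab,0}$, whose form~\eqref{eq:def-qf-H-op-c} is $\int_\Gamma(\ii\partial_s+A_\tau)(\gamma_0 v)\overline{\gamma_0 v}\dd s$ on the antiholomorphic Bergman space $\bar{\mathscr B}(D(0,1))$, diagonalized by $\bar v_m(z)=\bar z^m=\ee^{-\ii m\theta}$ on the boundary. Plugging into the analogue of Remark~\ref{rem:disc} with the sign of $\partial_s$ flipped gives the $m$-th eigenvalue equal to $2\pi(m+\Phi)$ times the normalization, for $m\geq0$. When $\beta=\Phi=0$ the $m=0$ mode (the constant function) gives eigenvalue $0$, which is counted by $N(\cdot,0_+)$, and all $m\geq1$ modes are strictly positive, so $N(\mathscr H^{\bar{\mathcal O}}_{\Ab,0},0_+)=1$; when $\beta>0$ every eigenvalue $2\pi(m+\Phi)$ with $m\geq0$ is strictly positive, so $N(\mathscr H^{\bar{\mathcal O}}_{\Ab,0},0_+)=0$. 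Here it is essential that I include $0_+$ rather than $0$, since the constant mode sits exactly at the threshold in the flux-free case.

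The one genuine subtlety — the step I expect to be the main obstacle — is justifying that the form $q^{\mathcal O}_{\Ab,0}$ (respectively its antiholomorphic counterpart) is \emph{genuinely} diagonalized by the monomial basis, i.e.\ that $N(\mathscr H^{\mathcal O}_{\Ab,0},0)$ really equals the number of negative diagonal entries and not something larger. This requires knowing that $\{z^m\}_{m\ge0}$ (suitably normalized) is not merely an orthogonal family but a \emph{complete} orthonormal system in $\mathscr B(D(0,1))$ that lies in the form domain $\mathscr B(\Omega)\cap H^1(\Omega)$, and that the off-diagonal entries $q^{\mathcal O}_{\Ab,0}(v_m, v_n)$ vanish for $m\neq n$. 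Both are standard for the disc — completeness is the classical description of the Bergman space of the disc, and the off-diagonal vanishing follows from $\int_0^{2\pi}\ee^{\ii(m-n)\theta}\,\dd\theta=0$ since the boundary operator preserves Fourier modes — but it is worth stating explicitly, because without completeness one would only get the inequality $N(\mathscr H^{\mathcal O}_{\Ab,0},0)\geq\AC{\Phi}$, whereas the reverse inequality $N(\Hb_{\Ab,0},0)=\AC{\Phi}$ from Proposition~\ref{prop:disc-radial} together with~\eqref{eq:N->N-O} then forces equality throughout. In fact this last remark gives a clean way to organize the proof: combine the chain $\AC{\Phi}=N(\Hb_{\Ab,0},0)\geq N(\mathscr H^{\mathcal O}_{\Ab,0},0)\geq\AC{\Phi}$, where the first equality is Proposition~\ref{prop:disc-radial}, the middle inequality is~\eqref{eq:N->N-O}, and the last inequality is the explicit lower bound from testing on $z^0,\dots,z^{\AC{\Phi}-1}$ via Remark~\ref{rem:disc} and Glazman's lemma~\eqref{eq:glazman} — so one never needs the hard completeness statement for the middle identity, only the easy test-function bound, with the sandwich doing the rest.
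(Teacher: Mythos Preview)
Your proposal is correct, and your final ``clean way to organize the proof'' via the sandwich
\[
\AC{\Phi}=N(\Hb_{\Ab,0},0)\geq N(\mathscr H^{\mathcal O}_{\Ab,0},0)\geq\AC{\Phi}
\]
is exactly the paper's argument for the middle equality (Proposition~\ref{prop:disc-radial} plus~\eqref{eq:N->N-O} plus the test-function lower bound from Remark~\ref{rem:disc}), so you correctly identified that completeness of the monomial basis is unnecessary there.

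The only place your approach diverges from the paper's is the antiholomorphic claim. You compute by explicit boundary diagonalization on $\bar z^m$, which is correct but does rely on completeness of $\{\bar z^m\}_{m\geq0}$ in $\bar{\mathscr B}(D(0,1))$ to conclude the form is nonnegative. The paper instead observes, via Remark~\ref{rem:identity-c}, that for $v\in\bar{\mathscr B}(\Omega)\cap H^1(\Omega)$ one has
\[
q^{\bar{\mathcal O}}_{\Ab,0}(v)=q^c_{\Ab,0}(\ee^{\phi}v)=\int_\Omega\bigl(|(-\ii\nabla-\Ab)(\ee^{\phi}v)|^2+B|\ee^{\phi}v|^2\bigr)\dd x\geq 0,
\]
with strict positivity for $v\neq0$ when $B>0$ and kernel equal to the constants when $B=0$. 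This bulk positivity argument bypasses any spectral decomposition and works for any domain, not just the disc; your boundary diagonalization is specific to the disc but gives the same answer.
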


\begin{proof}
If \(g=0\) and \(B>0\), then the quadratic form in~\eqref{eq:def-qf-H-op-c} is positive by Remark~\ref{rem:identity-c},  hence \(N(\mathscr H^{\bar{\mathcal O}}_{\Ab,0},0_+)=0\); if \(B=0\) then the constant functions are the zero-modes of  the quadratic form in~\eqref{eq:def-qf-H-op-c},  hence \(N(\mathscr H^{\bar{\mathcal O}}_{\Ab,0},0_+)=1\).

For \(0\leq m< \AC{\Phi} \), the functions $e^{-\phi} z^m$ produce negative eigenvalues of \(\mathscr H^{\mathcal O}_{\Ab,0}\). Therefore, $N(\mathscr H^{\mathcal O}_{\Ab,0},0)\geq \AC{\Phi}$. Combining Proposition~\ref{prop:disc-radial}  with Equation~\eqref{eq:N->N-O} we have that 
\[
    \AC{\Phi}
    \leq 
    N(\mathscr H^{\mathcal O}_{\Ab,0},0)
    \leq N (\Hb_{\Ab,0},0)
    =
    \AC{\Phi},
\]
and, consequently, \(N (\Hb_{\Ab,0},0)=N(\mathscr H^{\mathcal O}_{\Ab,0},0)\).
\end{proof} 

The proof of Proposition~\ref{prop:disc-cst} indicates an easy way to obtain a lower bound for \(N (\Hb_{\Ab,0},0)\). Even in the case of the unit disc,  \(\Omega=D(0,1)\),  we do not have such an \emph{easy} proof for the lower bound stated in Theorem~\ref{thm:main}, unless the tangential component  of the magnetic potential \(\Ab\) is constant on the boundary of $\Omega$. 

\subsection{General domains}
\label{subsec:gendom}

There is an interesting relation between the number \(N(\Hcr,0)\) of negative eigenvalues
of \(\Hcr\) and that of a certain restriction of the boundary Dirac operator encountered in Section~\ref{sec:Dirac}.  A similar relation  exists for  the number \(N(\Hcc,0_+)\) of non-positive eigenvalues of \(\Hcc\). 

We introduce the boundary potentials
\begin{equation}\label{eq:def-V}
    V = g-A_\tau,
    \quad 
    V^c = -g-A_\tau,
\end{equation}
where \(A_\tau\) is introduced in~\eqref{eq:def-a} and \(g\) is the function defining the Robin condition in~\eqref{eq:bc-Robin}.

We introduce the eigenspaces
\begin{equation}\label{eq:def-E-V}
    E_V = \im (\Pi_V) = \im( \mathbf 1_{(-\infty, 0)}(\Dd_V)),
    \quad 
    F_{V^c} 
    = \im\big(\mathrm{Id}-\Pi_{V^c}\big)
    = (E_{V^c})^{\bot},
\end{equation}
where \(\Pi_{\circ}\) is the spectral projection defined in~\eqref{eq:def-proj}.  Note that \(E_V\) corresponds to the negative eigenvalues of the operator \(\Dd_V\) introduced in~\eqref{eq:def-Dirac-V},  while  \(F_{V^c}\) corresponds to the non-negative eigenvalues of \(\Dd_{V^c}\).  Note that these objects are defined as direct sums in the non-simply connected case (see Remark~\ref{rem:Dirac-dc}). In particular we get,
\[
    E_V=\dsum  \im( \mathbf 1_{(-\infty, 0)}(\Dd_{V_j}))=\dsum E_{V_j},
\]
where the last identity defines the subspace \(E_{V_j}\) of \(L^2(\Gamma_j)\).

\begin{proposition}\label{knlknad*}
It holds that
\[
    N(\Hcr,0)
    \geq 
    \dim\bigl(E_V\cap \gamma_0(\mathscr B(\Omega))\bigr)
\]
and
\[
    N(\Hcc,0_+)
    \geq 
    \dim \bigl(F_{V^c}\cap \gamma_0(\bar{\mathscr B}(\Omega))\bigr).
\]
\end{proposition}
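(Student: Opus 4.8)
**Proof proposal for Proposition~\ref{knlknad*}.**

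The plan is to use Glazman's lemma~\eqref{eq:glazman} applied to the operator $\Hcr$ on the Hilbert space $\mathscr B(\Omega)$: we must exhibit a subspace $\mathcal M$ of the form domain on which the quadratic form $q_{\Ab,g}^{\mathcal O}$ is negative, and whose dimension is $\dim\bigl(E_V\cap\gamma_0(\mathscr B(\Omega))\bigr)$. First I would recall that for $v\in\mathscr B(\Omega)\cap H^1(\Omega)$ the form is purely the boundary term, which by Remark~\ref{rem:bd-op} and the identification $V=g-A_\tau$ reads
\[
    q_{\Ab,g}^{\mathcal O}(v)
    =
    \int_\Gamma (-\ii\partial_s + V)(\gamma_0 v)\,\overline{\gamma_0 v}\,\dd s
    =
    \IP{\Dd_V \gamma_0 v,\gamma_0 v}_{L^2(\Gamma)}.
\]
Thus the sign of the form on $v$ is controlled entirely by the trace $\gamma_0 v$ through the boundary Dirac operator $\Dd_V$. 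If $w\in E_V\cap\gamma_0(\mathscr B(\Omega))$, then on the one hand $w=\gamma_0 v$ for some holomorphic $v$, and on the other hand, since $E_V$ is spanned by eigenfunctions $f_{m,V}$ of $\Dd_V$ with negative eigenvalue $\mu_m(V,L)<0$, we have $\IP{\Dd_V w,w}\leq (\max_{\mu_m<0}\mu_m(V,L))\,\|w\|^2<0$ whenever $w\neq 0$. So the candidate subspace is $\mathcal M:=\gamma_0^{-1}(E_V)\cap\mathscr B(\Omega)\cap H^1(\Omega)$, or more precisely any lift of $E_V\cap\gamma_0(\mathscr B(\Omega))$ through $\gamma_0$.

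The key remaining point is that $\gamma_0$ is \emph{injective} on $\mathscr B(\Omega)$ (a nonzero holomorphic function cannot vanish on the whole boundary curve $\Gamma$, by the maximum principle or unique continuation), so any linear subspace $W\subseteq E_V\cap\gamma_0(\mathscr B(\Omega))$ lifts to a subspace $\gamma_0^{-1}(W)\cap\mathscr B(\Omega)$ of the \emph{same} dimension; one also needs this lift to land in $H^1(\Omega)$, which follows because the relevant boundary data $f_{m,V}$ are smooth and the holomorphic function with given smooth boundary trace is smooth up to $\Gamma$ (or, more carefully, because $E_V\cap\gamma_0(\mathscr B(\Omega))$ is finite-dimensional and spanned by traces of functions that are automatically in $\mathscr B(\Omega)\cap H^1(\Omega)$, as those are the only elements with finite form value). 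On this lifted subspace, for $0\neq v$ with $\gamma_0 v\in E_V$, we get $q_{\Ab,g}^{\mathcal O}(v)=\IP{\Dd_V\gamma_0 v,\gamma_0 v}<0$. Glazman's lemma~\eqref{eq:glazman} then yields $N(\Hcr,0)\geq\dim\bigl(E_V\cap\gamma_0(\mathscr B(\Omega))\bigr)$. The second inequality is proved the same way, replacing $\mathscr B(\Omega)$ by $\bar{\mathscr B}(\Omega)$, $q_{\Ab,g}^{\mathcal O}$ by $\mathfrak q^{\bar{\mathcal O}}_{\Ab,g}$, and noting that by~\eqref{eq:def-qf-H-op-c} the relevant boundary operator is $+\ii\partial_s + A_\tau + g = -(-\ii\partial_s + V^c) = -\Dd_{V^c}$ with $V^c=-g-A_\tau$; hence the form is $\leq 0$ precisely on traces lying in the \emph{non-positive} spectral subspace of $-\Dd_{V^c}$, i.e. the non-negative spectral subspace $F_{V^c}$ of $\Dd_{V^c}$ — which is why the left-hand side is $N(\Hcc,0_+)$ rather than $N(\Hcc,0)$. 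Injectivity of $\gamma_0$ on $\bar{\mathscr B}(\Omega)$ is the conjugate of the holomorphic statement.

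The main obstacle I expect is the bookkeeping around regularity and the lift through $\gamma_0$: one must be sure that the finite-dimensional space $E_V\cap\gamma_0(\mathscr B(\Omega))$ genuinely sits inside $\gamma_0\bigl(\mathscr B(\Omega)\cap H^1(\Omega)\bigr)$ so that the trial functions lie in the form domain of $\Hcr$, and that $\gamma_0$ restricted to $\mathscr B(\Omega)$ has trivial kernel so dimensions are preserved. Both are standard — injectivity is unique continuation for holomorphic functions, and the regularity is elliptic regularity for $\partial_{\bar z}$ with smooth boundary data on a smooth domain — but they are the places where the argument could be imprecise if stated carelessly. In the non-simply connected case everything decomposes as a direct sum over the components $\Gamma_j$ as in Remark~\ref{rem:Dirac-dc}, and the same argument applies componentwise while $\gamma_0$ on $\mathscr B(\Omega)$ remains injective because $\Omega$ is connected.
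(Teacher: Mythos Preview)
Your approach is exactly the paper's: injectivity of $\gamma_0$ on $\mathscr B(\Omega)$, negativity of $\IP{\Dd_V w,w}$ on $E_V$, then Glazman's lemma. The one place where your argument is incomplete is the regularity step you flag yourself. Neither of your two justifications quite works: elements of $E_V\cap\gamma_0(\mathscr B(\Omega))$ are a priori infinite $\ell^2$-combinations of the $f_{m,V}$, so individual smoothness of the $f_{m,V}$ is not enough; and the parenthetical alternative (``finite-dimensional and spanned by traces of functions automatically in $\mathscr B(\Omega)\cap H^1(\Omega)$, as those are the only elements with finite form value'') is circular, since membership in the form domain is precisely what is to be established.

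The paper closes this gap by identifying $E_V\cap\gamma_0(\mathscr B(\Omega))=\ker(P_{\mathcal C}+\Pi_V-2)$, where $P_{\mathcal C}$ is the Calder\'on projector of $\bar\partial$, and then invoking Lemma~\ref{charpiv}: since $\Id-\Pi_V$ and $P_{\mathcal C}$ have the same principal symbol, their difference is smoothing, so the kernel is finite-dimensional and contained in $H^{1/2}(\Gamma)$ by elliptic regularity. Once $f\in H^{1/2}(\Gamma)$, its holomorphic extension lies in $H^1(\Omega)$ by elliptic regularity for $\partial_{\bar z}$, and the rest of your argument goes through verbatim.
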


\begin{proof}
Notice first that the trace operator $\gamma_0:\mathscr{B}(\Omega)\to H^{-1/2}(\Gamma)$ is injective (a holomorphic function with zero trace on \(\Gamma\) is zero in all of \(\Omega\)).

By~\cite[(3.2a) in proof of Proposition 3.1]{ASSpair}, $E_V\cap \gamma_0(\mathscr B(\Omega))=\ker(P_{\mathcal{C}}+\Pi_V-2)$ where $P_{\mathcal{C}}$ denotes the Calder\'on projector of $\bar{\partial}$. Lemma~\ref{charpiv} and elliptic regularity implies that the space $E_V\cap \gamma_0(\mathscr B(\Omega))$ is a finite-dimensional subspace of $H^{1/2}(\Gamma)$. Therefore,  if \(f\in E_V\cap \gamma_0(\mathscr B(\Omega))\),  then \(f\in H^{1/2}(\Gamma)\)  and there exists \(F\in \mathscr B(\Omega)\) such that \(\gamma_0(F)=f\).  Since \(F\) is holomorphic in \(\Omega\), \(F\in H^{1}(\Omega)\) by elliptic regularity for $\partial_{\bar{z}}$.  Therefore \(F\) is in the form domain of the operator \(\Hcr\).  
The proof of the inequality 
\( N(\Hcr,0) \geq \dim\bigl(E_V\cap \gamma_0(\mathscr B(\Omega))\bigr)\) now follows by Glazman's lemma.
    
In a similar fashion,  the inequality 
\(N(\Hcc,0_+) \geq \dim \bigl(F_{V^c}\cap \gamma_0(\bar{\mathscr B}(\Omega))\bigr)\) follows.
\end{proof}

\begin{remark}
It is unclear to us when the opposite inequalities in Proposition~\ref{knlknad*} can be expected to hold true. For instance, a natural question is whether we have equalities in Proposition~\ref{knlknad*} when $B\geq 0$. Let us point at the technical difficulty. Let \(N=N(\Hcr,0)\) and  consider an orthonormal basis \(\{u_1,\ldots,u_N\}\) of the eigenspace \(M=\im\big(\mathbf 1_{(-\infty,0)}(\Hcr)\big)\), corresponding to the negative eigenvalues of \(\Hcr\).  Then \(\dim\big(\gamma_0(M)\big)=N\) but we do not necessarily have the inclusion \(\gamma_0(M)\subset E_V\cap \gamma_0(\mathscr B(\Omega))\). Even after projecting again on \(E_V\), we do not know that the set \(\{\Pi_V\gamma_0(u_1),\ldots ,\Pi_V\gamma_0(u_n)\}\) is linearly independent, or even a subset of \(\gamma_0(\mathscr{B}(\Omega))\).  

When $B$ is allowed to change sign, the inequalities in Proposition~\ref{knlknad*} can be strict. Indeed, with \(\Omega = D(0,1)\) and \(B(x) = -4x_2\) the first inequality in Proposition~\ref{knlknad*} is strict. In this setup, $\phi(x)=\frac{x_2}{2}(1-x_1^2-x_2^2)$, \(\Ab(x) = (-\frac{1}{2}(1-x_1^2-3x_2^2),-x_1x_2)  \), \(A_\tau(s)=-\sin(s)\) and \(g=0\) (so \(V(s)=-A_\tau(s)\)),   and \(B\) has zero flux. Let \(v(z) = e^{-\ii z/2}\). Then \(v\) is holomorphic in the interior of \(\Omega\) and
\[
    \begin{aligned}
        q_{B,0}^{\mathcal O}(\ee^{\phi}v) 
        &= 
        \int_0^{2\pi}\bigl[\bigl(-\ii \partial_s+\sin(s)\bigr)v(\ee^{is})\bigr]\overline{v(\ee^{is})}\dd s\\
        &=
        -\frac{1}{2} \int_0^{2\pi} \ee^{-\ii s} \ee^{\cos s}\dd s\\
        &=-\pi I_1(1) <0,
    \end{aligned}
\]
where \(I_1\) denotes the modified Bessel function of the first kind.  By Glazman's lemma~\eqref{eq:glazman},  this yields that \(N(\Hcr,0)\geq 1\).  However, no non-zero linear combintion of the Fourier series of \(f_{m,V}(s)=\ee^{\ii z}\ee^{-\ii\bar{z}}\ee^{\ii(m-1)s}\) is of the form 
\(\sum_{n\geq 0}\ee^{\ii ns}\). Therefore, no non-zero linear combination of the eigenfunctions of \(\Dd_{V}\), 
\[
    f_{m,V}(s)=\exp(\ii(m-1)s)\exp\left(\ii\frac{\ee^{\ii s}+\ee^{-\ii s} }{2}\right)\not\in \gamma_0(\mathscr B(\Omega))
\]
is a boundary restriction of a holomorphic function, so \(\dim\bigl(E_{V}\cap \gamma_0(\mathscr B(\Omega))\bigr)=0\).
\end{remark}

\section{Proof of Theorem~\ref{thm:main} using index theory}
\label{sec:index}

We present the proof of Theorem~\ref{thm:main} by assuming first  that the magnetic potential  \(\Ab\) is given by~\eqref{eq:def-A}.  The proof in the general case of any smooth magnetic potential will require a few  adjustments that we deal with in Remark~\ref{rem:generalmagn} below.

\subsection{An intermezzo on Dirac operators}
\label{subsec:intermezzo}

To describe the negative eigenvalues in terms of index theory, we shall need a suitable formalism. We consider the Dirac operator
\begin{equation*}
    \slashed{D}_\phi 
    = 
    \begin{pmatrix} 
        0                  & \slashed{D}_\phi^-\\
        \slashed{D}_\phi^+ & 0
    \end{pmatrix}
    \coloneqq 
    2\begin{pmatrix} 
    0                                     & -\ee^{\phi}\partial_{z}\ee^{-\phi} \\
    \ee^{-\phi}\partial_{\bar z}\ee^{\phi}& 0
    \end{pmatrix}
\end{equation*}
acting on functions in $\Omega$. The operator $\slashed{D}_\phi$ is formally self-adjoint and forms a \enquote{square root} of the full Pauli operator,
\begin{equation*}
    \slashed{D}_\phi^2
    =
    \begin{pmatrix} 
        (-\ii\nabla -\Ab)^2- B & 0                       \\
        0                      &(-\ii\nabla - \Ab)^2 + B
    \end{pmatrix}.
\end{equation*}

We next introduce a realization \(\slashed{D}_{\phi,\mathrm{APS},V}^{+}\) of \(\slashed{D}_\phi^{+}\), in \(L^2(\Omega)\), whose index can be calculated with the Atiyah--Patodi--Singer index theory~\cite{APS,boosss,ggrubb,melrose}. The domain of the operator $\slashed{D}_{\phi,\mathrm{APS},V}^+$ is
\begin{equation}\label{eq:def-dom-APS}
    \mathrm{Dom}\big(\slashed{D}_{\phi,\mathrm{APS},V}^{+}\big)
    =
    \mathfrak D_V(\Omega)\coloneqq \{u\in H^1(\Omega): \gamma_0(u)\in E_V\},
\end{equation}
where \(V\) is the potential introduced in~\eqref{eq:def-V} and \(E_V\) is the eigenspace from~\eqref{eq:def-E-V}.

\begin{proposition}\label{lnnada}
The operator \(\slashed{D}_{\phi,\mathrm{APS},V}^{+}\) is a Fredholm operator.  Its adjoint $(\slashed{D}_{\phi,\mathrm{APS},V}^+)^*$ is the differential operator $\slashed{D}_\phi^-$ on $\Omega$ with the domain 
\[
    \mathrm{Dom}\bigl((\slashed{D}_{\phi,\mathrm{APS},V}^+)^*\bigr)
    \coloneq
    \{u\in H^1(\Omega): \gamma_0(u) \in (E_{V-\kappa})^{\perp}\}.
\]
Furthermore, the boundary restriction $\gamma_0$ implements isomorphisms
\begin{align}
\label{isorfrom}
    \ker \bigl(\slashed{D}_{\phi,\mathrm{APS},V}^{+}\bigr) 
    &\cong
     E_V \cap \gamma_0(\mathscr{B}(\Omega)), \quad\text{and}\\
     \nonumber
\ker \bigl((\slashed{D}_{\phi,\mathrm{APS},V}^+)^*\bigr)
    &\cong
    (E_{V-\kappa})^{\perp} \cap \gamma_0(\bar{\mathscr{B}}(\Omega)).
\end{align}
\end{proposition}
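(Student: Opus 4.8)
The plan is to peel off the conformal weight $\ee^{\pm\phi}$ and reduce the statement to the analogous one for the bare operator $2\partial_{\bar z}$ equipped with the Atiyah--Patodi--Singer boundary condition $\gamma_0 F\in E_V$, which by Lemma~\ref{charpiv} is a well-posed elliptic boundary value problem. Write $\mathsf{D}_V$ for $2\partial_{\bar z}$ acting on $\{F\in H^1(\Omega):\gamma_0 F\in E_V\}$. Since $\phi$ vanishes on $\Gamma$ we have $\gamma_0(\ee^{\pm\phi}u)=\gamma_0 u$, so multiplication by $\ee^{\phi}$ is a bounded isomorphism of $H^1(\Omega)$ carrying $\mathfrak D_V(\Omega)=\mathrm{Dom}(\slashed{D}_{\phi,\mathrm{APS},V}^{+})$ bijectively onto $\mathrm{Dom}(\mathsf{D}_V)$, and a direct computation gives
\[
    \slashed{D}_{\phi,\mathrm{APS},V}^{+}=\ee^{-\phi}\circ\mathsf{D}_V\circ\ee^{\phi}.
\]
Because $\ee^{\pm\phi}$ are bounded, self-adjoint, boundedly invertible multiplication operators on $L^2(\Omega)$, this identity transfers the Fredholm property and the index from $\mathsf{D}_V$ to $\slashed{D}_{\phi,\mathrm{APS},V}^{+}$, yields $\ker\slashed{D}_{\phi,\mathrm{APS},V}^{+}=\ee^{-\phi}\ker\mathsf{D}_V$, and gives $(\slashed{D}_{\phi,\mathrm{APS},V}^{+})^{*}=\ee^{\phi}\circ\mathsf{D}_V^{*}\circ\ee^{-\phi}$. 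Hence it suffices to prove the four assertions for $\mathsf{D}_V$ and then conjugate back, using $\phi|_\Gamma=0$ once more to carry the boundary conditions across.

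For the model operator I would proceed as follows. The boundary condition $\gamma_0 F\in E_V$ reads $(\Id-\Pi_V)\gamma_0 F=0$, and by Lemma~\ref{charpiv} the projection $\Id-\Pi_V$ is a classical pseudodifferential operator of order zero whose principal symbol is that of the Calder\'on projector of $\partial_{\bar z}$; thus the condition is well-posed for the elliptic operator $2\partial_{\bar z}$. By the standard theory of first-order elliptic boundary value problems (see e.g.\ \cite{boosss,ggrubb}, compare also~\cite{ASSpair}), $\mathsf{D}_V$ is then Fredholm with finite-dimensional kernel and cokernel, $\mathsf{D}_V^{*}$ is a restriction of the maximal realization of $-2\partial_z$, and $\mathrm{Dom}(\mathsf{D}_V^{*})\subset H^1(\Omega)$ by elliptic regularity, the adjoint boundary condition being again of Calder\'on type by Lemma~\ref{charpiv} applied to $V-\kappa$. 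To identify it explicitly I would use Green's formula for $\partial_{\bar z}$: integrating by parts with the inward unit normal $\nu$ (as in the proof of Proposition~\ref{prop:identity}) gives, for $F,G\in H^1(\Omega)$,
\[
    \IP{2\partial_{\bar z}F,G}+\IP{F,2\partial_z G}=-\int_\Gamma(\gamma_0 F)\,\overline{(\nu_1-\ii\nu_2)\gamma_0 G}\dd s .
\]
Since $\mathsf{D}_V$ lies between the minimal and maximal realizations of $2\partial_{\bar z}$, its adjoint acts as $-2\partial_z$; the displayed identity then shows that $G\in\mathrm{Dom}(\mathsf{D}_V^{*})$ exactly when $(\nu_1-\ii\nu_2)\gamma_0 G$ is orthogonal to $E_V=\im\Pi_V$ in $L^2(\Gamma)$, which by Proposition~\ref{prop:duality-proj} is equivalent to $\gamma_0 G\in(\im\Pi_{V-\kappa})^{\perp}=(E_{V-\kappa})^{\perp}$. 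Conjugating back by $\ee^{\pm\phi}$ (and using $\gamma_0(\ee^{-\phi}w)=\gamma_0 w$) produces $(\slashed{D}_{\phi,\mathrm{APS},V}^{+})^{*}=\slashed{D}_\phi^{-}$ with domain $\{w\in H^1(\Omega):\gamma_0 w\in(E_{V-\kappa})^{\perp}\}$, as claimed.

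It remains to unwind the kernels. An element of $\ker\mathsf{D}_V$ is an $F\in H^1(\Omega)$ with $\partial_{\bar z}F=0$, i.e.\ $F\in\mathscr B(\Omega)\cap H^1(\Omega)$, satisfying $\gamma_0 F\in E_V$. The trace $\gamma_0$ is injective on $\mathscr B(\Omega)$ (a holomorphic function with vanishing boundary trace is zero), while conversely any $f\in E_V\cap\gamma_0(\mathscr B(\Omega))$ automatically lies in $H^{1/2}(\Gamma)$ and lifts to an element of $\mathscr B(\Omega)\cap H^1(\Omega)$ by elliptic regularity for $\partial_{\bar z}$, exactly as in the proof of Proposition~\ref{knlknad*}. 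Hence $\gamma_0$ restricts to an isomorphism $\ker\mathsf{D}_V\cong E_V\cap\gamma_0(\mathscr B(\Omega))$, and composing with $\ee^{-\phi}$ (which leaves the boundary trace unchanged) gives the first isomorphism in~\eqref{isorfrom}. Applying the same reasoning to $\mathsf{D}_V^{*}=-2\partial_z$, whose kernel consists of the anti-holomorphic $H^1$ functions with trace in $(E_{V-\kappa})^{\perp}$, and conjugating by $\ee^{\phi}$, gives the second isomorphism.

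The step I expect to be the real obstacle is the well-posedness and elliptic-regularity input for the model operator $\mathsf{D}_V$: one must be certain that the pseudodifferential boundary condition furnished by Lemma~\ref{charpiv} --- which matches the Calder\'on projector only to leading order --- still produces a Fredholm realization and, most importantly, forces the adjoint domain into $H^1(\Omega)$ rather than only into the maximal domain of $\partial_z$. The remaining ingredients --- the conjugation by $\ee^{\pm\phi}$, the Green's formula above, and the projection duality --- are already in hand: the first is elementary because $\phi|_\Gamma=0$, and the duality is precisely Proposition~\ref{prop:duality-proj}.
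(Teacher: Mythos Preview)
Your proof is correct and follows essentially the same route as the paper's: both rely on Lemma~\ref{charpiv} to compare $\Id-\Pi_V$ with the Calder\'on projector, invoke the general Fredholm and regularity theory for first-order elliptic boundary value problems (the paper cites \cite{goffbandara,baerball,baerbandara} for this precise step), identify the adjoint via integration by parts combined with Proposition~\ref{prop:duality-proj}, and use trace injectivity on $\mathscr B(\Omega)$ for the kernel isomorphisms. Your preliminary conjugation by $\ee^{\pm\phi}$ (exploiting $\phi|_\Gamma=0$) is a harmless and pleasant simplification that the paper omits, working instead directly with the weighted operator.
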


\begin{remark}\label{rem:index}
    By Propositions~\ref{knlknad*} and~\ref{lnnada}, the index of $\slashed{D}_{\phi,\mathrm{APS},V}^+$ satisfies
    \begin{equation*}
    \ind (\slashed{D}_{\phi,\mathrm{APS}, V}^+) \leq N(\Hcr,0).
    \end{equation*}
\end{remark}    

\begin{proof}[Proof of Proposition~\ref{lnnada}]
By Lemma~\ref{charpiv}, the projection \(\Pi^{\perp}_V\) only differs from the Calder\'on projector for $\slashed{D}_\phi^+$---the Cauchy integral operator---by an operator compact on $H^{1/2}(\Gamma)$ and $H^{-1/2}(\Gamma)$.  It follows from~\cite[Theorem 4]{goffbandara} (see also~\cite[Theorem 7.20]{baerball} and~\cite{baerbandara}) that $\slashed{D}_{\phi,\mathrm{APS},V}^+$ is a Fredholm operator, and even regular.
    
We now compute the adjoint of $\slashed{D}_{\phi,\mathrm{APS},V}^{+}$. Take \(u\) in the domain of \((\slashed{D}_{\phi,\mathrm{APS},V}^{+})^*\). For all \(v\) in the domain of \(\slashed{D}_{\phi,\mathrm{APS},V}^{+}\) we have
\[
    \IP{u,\slashed{D}_{\phi,\mathrm{APS},V}^{+}v}
    =
    \IP{(\slashed{D}_{\phi,\mathrm{APS},V}^{+})^*u,v}.
\]
Integration by parts yields
\[
    \IP{u,\slashed{D}_{\phi,\mathrm{APS},V}^{+}v}
    =
    \IP{\slashed{D}_{\phi}^-u,v}-\IP{(\nu_1-\ii\nu_2)u,v}_{L^2(\Gamma)},
\]
hence  \(u\) is in the domain of \((\slashed{D}_{\phi,\mathrm{APS},V}^{+})^*\)  if, and only if, \((\nu_1-\ii\nu_2) \gamma_0(u) \perp \gamma_0(v)\). By Proposition~\ref{prop:duality-proj} we conclude that \(\gamma_0(u)\in (E_{V - \kappa})^{\perp}\). 
    
A direct verification shows that $\gamma_0$ induces the isomorphisms stated in~\eqref{isorfrom}.
\end{proof}

\subsection{Reduction to standard APS-index}
\label{subsec:relhomo}

We now compute the index of $\slashed{D}^+_{\phi,\mathrm{APS},V}$ introduced in Theorem~\ref{lnnada} by using a relative index theorem that realizes the index as a flow of eigenvalues. It suffices to consider the operator \(\slashed{D}_{\mathrm{APS},V}^{+}\), with action
\[
    \slashed{D}_{\mathrm{APS},V}^{+}
    \coloneqq 
    \slashed{D}_{\phi}^{+}|_{\phi=0} 
    =
    2\partial_{\bar z},
\]
and with domain \(\mathfrak D_V(\Omega)\) from~\eqref{eq:def-dom-APS}. In fact, the identity~\eqref{isorfrom} in Proposition~\ref{lnnada} implies that $\ind (\slashed{D}_{\phi,\mathrm{APS}, V}^+)$ is independent of $\phi$ (since the right hand side of~\eqref{isorfrom} is independent of \(\phi\)) and we have the identity
\begin{equation}\label{eq:index-phi=0}
    \ind (\slashed{D}_{\phi,\mathrm{APS}, V}^+)
    =
    \ind \big(\slashed{D}_{\mathrm{APS}, V}^+\big).
\end{equation}

Recall that the boundary \(\Gamma\) consists of \(d+1\) curves and can be disconnected (when \(d\geq 1\)).  For all \(\vec\lambda\in\R^{d+1}\),  we introduce the  operator 
\begin{equation}\label{eq:def-op-APS*}
    \left.
    \begin{aligned}
        \slashed{D}_{\mathrm{APS}, V}^+(\vec\lambda)
        & \coloneqq 
        2\partial_{\bar z},\\
        \dom \bigl(\slashed{D}_{\mathrm{APS}, V}^+(\vec\lambda)\bigr)&\coloneqq \{u\in H^1(\Omega)~:~\gamma_0(u)\in E_V(\vec\lambda)\},
    \end{aligned}
    \right\}
\end{equation}
where \(E_V(\vec\lambda)\coloneqq \im\bigl(\Pi_V(\vec\lambda) \bigr)\) is the subspace defined by the spectral projection
\begin{equation*}
    \Pi_V(\vec\lambda)
    \coloneqq \dsum
    \mathbf 1_{(-\infty,\lambda_j)}(\Dd_{V_j}),
\end{equation*}
and notice that,  in the simply connected case,  \(d=0\),  the previous formula reduces to
\[    
    \Pi_V(\lambda)
    \coloneqq
    \mathbf 1_{(-\infty,\lambda)}(\Dd_{V}).
\]
In terms of the orthogonal projection on \((E_V(\vec\lambda))^\perp\), \(\Pi_V^\perp(\vec\lambda)\coloneqq \Id - \Pi_V(\vec\lambda)=\dsum \mathbf 1_{[\lambda_j,+\infty)}(\Dd_{V_j})\), we can express the domain of \(\slashed{D}_{\mathrm{APS}, V}^+(\vec\lambda)\) as follows
\[
    \dom \bigl(\slashed{D}_{\mathrm{APS}, V}^+(\vec\lambda)\bigr)
    =
    \{u\in H^1(\Omega)~:~\Pi_V^\perp(\vec\lambda)\gamma_0(u)=0\}.
\]
Using that the index is invariant by homotopy, we will now prove that we can reduce from computing the index with \(V=g-A_\tau\) to \(V=0\), but with the flux appearing in  the spectral parameter \(\tilde{\Phi}_V=(\frac{2\pi}{L_0}\Phi_{V_0},\cdots,\frac{2\pi}{L_d}\Phi_{V_d})\in\R^{d+1}\)  (it is a vector in the non-simply connected case,  see Remark~\ref{rem:Dirac-dc}). If \(\Omega\) is simply connected, \(\tilde{\Phi}_V\) has only one component which is \(2\pi/L\) times the scalar  \(\Phi_V\)
 introduced in~\eqref{eq:def-Phi-g}.

\begin{proposition}\label{lemkmknknk}
It holds that 
\[
    \ind \bigl(\slashed{D}_{\mathrm{APS}, V}^+\bigr)
    =
    \ind \bigl(\slashed{D}_{\mathrm{APS}, 0}^+(-\tilde{\Phi}_V)\bigr).
\]
\end{proposition}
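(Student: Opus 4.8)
The plan is to use the explicit conjugation formula from Lemma~\ref{charpiv}, which rewrites the spectral projection $\Pi_V(\alpha)$ of the twisted Dirac operator $\Dd_V$ as a unitary conjugate of $\Pi_0$ of the \emph{untwisted} operator $\Dd_0$, with a shift in the spectral parameter by $\frac{2\pi}{L}\Phi_V$. Concretely, recall that Lemma~\ref{charpiv} gives
\[
    \Pi_V(\alpha)
    =
    \ee^{\ii\Theta_V}\,\Pi_0\!\left(\alpha-\tfrac{2\pi}{L}\Phi_V\right)\,\ee^{-\ii\Theta_V},
    \qquad
    \Theta_V(s)=\tfrac{2\pi}{L}\Phi_V s-\int_0^s V(\varsigma)\dd\varsigma ,
\]
and that in the disconnected case (Remark~\ref{rem:Dirac-dc}) this holds componentwise on each $\Gamma_j$, so that $E_V(\vec\lambda)$ decomposes as a direct sum $\dsum E_{V_j}(\lambda_j)$ with each factor conjugated by $\ee^{\ii\Theta_{V_j}}$ on $L^2(\Gamma_j)$. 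Setting $\vec\lambda=0$ in the twisted picture corresponds to $\vec\lambda=-\tilde\Phi_V$ in the untwisted picture, which is precisely the claimed identity once one tracks what conjugation does to the domain.

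First I would assemble the boundary conjugating function. Let $\Xi$ be the $L^\infty(\Gamma)$ function that equals $\ee^{\ii\Theta_{V_j}}$ on each component $\Gamma_j$; note $\Theta_{V_j}$ is a genuine function on $[0,L_j)$ but need \emph{not} be $L_j$-periodic, so $\Xi$ is only a measurable unimodular function on $\Gamma$, not continuous at the base points. This is harmless for the present purpose since we only use it to conjugate spectral \emph{projections} in $L^2(\Gamma)$ and to relate boundary traces as elements of $H^{\pm 1/2}(\Gamma)$ — what matters is the operator identity $\Pi_V(0)=\Xi\,\Pi_0(-\tilde\Phi_V)\,\Xi^{-1}$ in $\mathcal B(L^2(\Gamma))$, which is exactly the content of (the componentwise version of) Lemma~\ref{charpiv}. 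Equivalently, $E_V(0)=\Xi\cdot E_0(-\tilde\Phi_V)$ as subspaces of $L^2(\Gamma)$.

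Next I would exploit the kernel description~\eqref{isorfrom} from Proposition~\ref{lnnada}, rather than homotopy. Applied to $\slashed{D}_{\mathrm{APS},V}^+=2\partial_{\bar z}$ with boundary condition $\gamma_0(u)\in E_V(0)$, that proposition (with $\phi=0$, and noting $\kappa$ is unaffected by the conjugation since it does not enter $V$) gives
\[
    \ker\bigl(\slashed{D}_{\mathrm{APS},V}^+\bigr)\cong E_V(0)\cap\gamma_0(\mathscr B(\Omega)),
    \qquad
    \ker\bigl((\slashed{D}_{\mathrm{APS},V}^+)^*\bigr)\cong (E_{V-\kappa}(0))^\perp\cap\gamma_0(\bar{\mathscr B}(\Omega)).
\]
The cleanest route, though, is to observe directly that the unitary multiplication operator $\Xi$ on $L^2(\Gamma)$ — which is \emph{not} the trace of a conformal map but is induced fiberwise by the phases $\ee^{\ii\Theta_{V_j}}$ — carries $E_0(-\tilde\Phi_V)$ isomorphically onto $E_V(0)$ and $(E_{-\kappa}(-\tilde\Phi_{V})\text{-type space})$ appropriately; combined with the two kernel formulas this yields equality of the dimensions of kernel and cokernel of $\slashed{D}_{\mathrm{APS},V}^+$ and $\slashed{D}_{\mathrm{APS},0}^+(-\tilde\Phi_V)$, hence equality of indices. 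Here one must check that $\Theta_{V-\kappa}$ and $\Theta_V$ differ (on each component) by the \emph{same} phase that relates $E_{V-\kappa}$ to $E_{V}$ in Proposition~\ref{prop:duality-proj}, so that the cokernel side transforms compatibly — this is a bookkeeping point using $\int_\Gamma\kappa\,\dd s=2\pi$ (Gauss--Bonnet) and $\mu_m(V,L)=\mu_{m+1}(V-\kappa,L)$, both already recorded in Section~\ref{sec:Dirac}.

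Alternatively, and perhaps more in the spirit of the surrounding text, one can argue by homotopy invariance of the Fredholm index: the family $V_t\coloneqq (1-t)V$, $t\in[0,1]$, together with the corresponding moving spectral parameter $\vec\lambda_t\coloneqq -t\,\tilde\Phi_V + (1-t)\cdot 0$ chosen so that $\lambda_{j,t}$ never hits an eigenvalue of $\Dd_{(V_j)_t}$ — which holds because $\lambda_{j,t}-\tfrac{2\pi}{L_j}\Phi_{(V_j)_t}=-t\tfrac{2\pi}{L_j}\Phi_{V_j}-(1-t)\tfrac{2\pi}{L_j}(1-t)\Phi_{V_j}$ stays off $\tfrac{2\pi}{L_j}\Z$ for suitable normalization, or more simply because by Lemma~\ref{charpiv} the projection $\Pi_{V_t}(\vec\lambda_t)$ is a \emph{norm-continuous} family of pseudodifferential projections of order zero differing from the Calderón projector by compacts — gives a continuous path of regular Fredholm operators from $\slashed{D}_{\mathrm{APS},V}^+$ to $\slashed{D}_{\mathrm{APS},0}^+(-\tilde\Phi_V)$, using the Fredholm stability result \cite[Theorem 4]{goffbandara} cited in the proof of Proposition~\ref{lnnada}. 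Either way, the \textbf{main obstacle} is the same: verifying that the conjugating phases are genuinely admissible — i.e. that $\Xi$ (or the path $V_t$) keeps us within the class of boundary conditions for which the APS-type realization is Fredholm and regular, despite $\Theta_{V_j}$ failing to be periodic — and that the conjugation acts compatibly on \emph{both} the kernel and the cokernel descriptions in~\eqref{isorfrom}. Once this compatibility is in place, the equality of indices is immediate.
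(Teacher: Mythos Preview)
Your proposal contains two distinct approaches, and they deserve separate verdicts.

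\textbf{The direct conjugation approach has a genuine gap.} You correctly observe that $\Xi=\ee^{\ii\Theta_V}$ carries $E_0(-\tilde\Phi_V)$ onto $E_V(0)$ in $L^2(\Gamma)$, but the kernel description in~\eqref{isorfrom} involves the \emph{intersection} $E_V(0)\cap\gamma_0(\mathscr B(\Omega))$. Multiplication by $\Xi$ does \emph{not} preserve the Hardy-type space $\gamma_0(\mathscr B(\Omega))$: the phase $\Theta_V$ is neither periodic nor the boundary value of a holomorphic function, so $\Xi\cdot\gamma_0(\mathscr B(\Omega))\neq\gamma_0(\mathscr B(\Omega))$ in general. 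Hence there is no reason for $\Xi$ to restrict to an isomorphism between $E_0(-\tilde\Phi_V)\cap\gamma_0(\mathscr B(\Omega))$ and $E_V(0)\cap\gamma_0(\mathscr B(\Omega))$, and likewise on the cokernel side. This is not a bookkeeping point --- it is the entire content of the proposition, and your ``cleanest route'' stops exactly where the difficulty begins. (The counterexample in the remark after Proposition~\ref{knlknad*} already shows how sensitive these intersections are to the choice of $V$.)

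\textbf{The homotopy approach is essentially the paper's proof, and it is the one that works.} If you take $V_t=(1-t)V$ and $\vec\lambda_t=-t\,\tilde\Phi_V$, then Lemma~\ref{charpiv} gives $\Pi_{V_t}(\vec\lambda_t)=\ee^{\ii(1-t)\Theta_V}\Pi_0(-\tilde\Phi_V)\ee^{-\ii(1-t)\Theta_V}$ componentwise, i.e.\ the family of boundary projections is simply a fixed projection conjugated by a smooth family of unitary pseudodifferential multipliers. This is exactly what the paper does (with the opposite parametrization $t\mapsto 1-t$): one obtains a norm-continuous family of order-zero pseudodifferential projections, each differing from the Calder\'on projector by a compact operator, so the corresponding realizations are a continuous family of regular Fredholm operators and homotopy invariance of the index finishes the argument. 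Your worry about $\lambda_{j,t}$ ``hitting eigenvalues'' is misplaced (and your displayed computation of $\lambda_{j,t}-\tfrac{2\pi}{L_j}\Phi_{(V_j)_t}$ contains an error): the conjugation formula shows the projection is literally constant up to smooth unitary conjugation, so continuity is automatic regardless of where $\lambda_{j,t}$ sits.

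In short: drop the first route and keep the second, cleaned up as above --- that is the paper's argument.
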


\begin{proof}
Recall that \(L^2(\Gamma)\cong \dsum L^2(\Gamma_j)\) and that \(\Dd_V=\dsum \Dd_{V_j}\).  By Proposition~\ref{speckkned},  the eigenfunction \(f_{m,V_j}\) of \(\Dd_{V_j}\) can be written on the component \(\Gamma_j\cong [0,L_j)\) in the form
\[
    f_{m,V_j}(s)=\ee^{\ii \Theta_{V_j}(s) }f_{m,0}(s),
    \quad\text{where}\quad
    \Theta_{V_j}(s)=\frac{2\pi}{L_j}\Phi_{V_j}s-\int_0^sV_j(\varsigma)\dd\varsigma \quad (0\leq s\leq L_j).
\]
For all \(t\in[0,1]\),  we introduce the orthogonal projection
\[
    P_t \coloneqq\dsum P_{t,j},\quad P_{t,j} \coloneqq \sum_{\substack{m\geq -\Phi_{V_j}\\m\in\Z}} \IP{\cdot,f_{m,tV_j}}_{L^2(\Gamma_j)} f_{m,tV_j}\,,
\]
on the closure of the space spanned by 
\[
    S_{tV}\coloneqq\dsum S_{tV_j},\quad S_{tV_j}\coloneqq \{f_{m,tV_j}~:~ m\in\Z,m\geq-\Phi_{V_j} \}.
\]
Notice that
\[
    \begin{gathered}
        P_{0,j}
        =
        \Pi_0^\perp\Bigl(-\frac{2\pi}{L_j}\Phi_{V_j}\Bigr),\quad 
        P_{1,j}
        =
        \Pi_{V_j}^\perp(0),\\
        P_{t,j}
        =
        \ee^{\ii t\Theta_{V_j}}P_{0,j}\ee^{-\ii t\Theta_{V_j}}
        =
        \Pi_{tV_j}^\perp\Bigl((t-1)\frac{2\pi}{L_j}\Phi_{V_j}\Bigr).
    \end{gathered}
\]
Appealing to Lemma~\ref{charpiv}, we have that $[0,1]\ni t\mapsto P_{t,j}$ is a smooth family of pseudodifferential operators of order \(0\) on \(\Gamma\). The family $t\mapsto P_{t,j}$ differs from the Calder\'on projector by a family of compact operators and so is constant up to compact operators. Therefore, for any $t\in [0,1]$, the operator 
\[
    \slashed{D}^+_{P_t}=2\partial_{\bar z}, 
    \quad 
    \dom (\slashed{D}^+_{P_t})=\{u\in H^1(\Omega):~P_t\gamma_0(u)=0\},
\]
is a regular realization of the elliptic operator $2\partial_{\bar{z}}$. Therefore,~\cite[Theorem 8.5]{goffbandara} implies that 
\[
    \ind (\slashed{D}^+_{P_1})
    =
    \ind (\slashed{D}^+_{P_0}).
\]
To finish the proof, we notice that \(\slashed{D}^+_{P_1}=\slashed{D}_{\mathrm{APS}, V}^+\) and \(\slashed{D}^+_{P_0}= \slashed{D}_{\mathrm{APS}, 0}^+(-\tilde{\Phi}_V)\).
\end{proof}

\begin{proposition}\label{lnlnadla}
It holds that
\[
    \ind \bigl(\slashed{D}^+_{\mathrm{APS},0}(-\tilde\Phi_V)\bigr)
    = 
    \ind \bigl(\slashed{D}^+_{\mathrm{APS},0}(0)\bigr)+\sum_{j=0}^d\AC{-\Phi_{V_j}}.
\]
\end{proposition}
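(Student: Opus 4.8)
The plan is to compute the index of $\slashed{D}^+_{\mathrm{APS},0}(\vec\lambda)$ as a function of the spectral-cut vector $\vec\lambda\in\R^{d+1}$, tracking how it jumps as $\vec\lambda$ moves from $0$ to $-\tilde\Phi_V$. Since in this operator $V=0$, the boundary Dirac operator on each component $\Gamma_j$ is simply $\Dd_0 = -\ii\partial_s$ on $L^2([0,L_j))$ with periodic conditions, whose spectrum is the arithmetic progression $\tfrac{2\pi}{L_j}\Z$ by Proposition~\ref{speckkned} (with $\Phi_V=0$). The domain of $\slashed{D}^+_{\mathrm{APS},0}(\vec\lambda)$ imposes $\Pi_0^\perp(\vec\lambda)\gamma_0(u)=0$, i.e.\ the trace of $u$ is spanned by those exponentials $f_{m,0}$ on $\Gamma_j$ with $\tfrac{2\pi}{L_j}m<\lambda_j$. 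As $\vec\lambda$ varies in a region where no coordinate $\lambda_j$ crosses a point of $\mathrm{spec}(\Dd_{0}|_{\Gamma_j})=\tfrac{2\pi}{L_j}\Z$, the projection $\Pi_0(\vec\lambda)$ is locally constant, hence the domain and the index are constant. So the index is a piecewise-constant (in fact integer-valued, nondecreasing in each $\lambda_j$) function, and the content of Proposition~\ref{lnlnadla} is precisely the count of how many spectral points each coordinate of $-\tilde\Phi_V$ has passed relative to $0$, namely $\sum_j \lceil -\Phi_{V_j}\rceil$ (using $\tilde\Phi_{V,j}=\tfrac{2\pi}{L_j}\Phi_{V_j}$, so a crossing of $\tfrac{2\pi}{L_j}\Z$ by $-\tilde\Phi_{V,j}$ corresponds to $-\Phi_{V_j}$ crossing an integer).

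Concretely I would proceed as follows. First, reduce to a single boundary component: since everything decomposes as a direct sum over $j$ (Remark~\ref{rem:Dirac-dc}) and the index is additive in this sense, it suffices to compute the index jump along a path where only one coordinate $\lambda_j$ moves. Second, observe that when $\lambda_j$ increases past a single eigenvalue $\mu_{m_0}(0,L_j)=\tfrac{2\pi}{L_j}m_0$ of $\Dd_0|_{\Gamma_j}$, the domain of $\slashed{D}^+_{\mathrm{APS},0}(\vec\lambda)$ gains exactly the one-dimensional subspace of traces proportional to $f_{m_0,0}$ on $\Gamma_j$ (the allowed boundary data grows by one dimension) while the cokernel's boundary condition, living on $(E_{-\kappa})^\perp$ by Proposition~\ref{lnnada} applied with the shifted potential, loses one dimension in the analogous count; by the isomorphisms~\eqref{isorfrom} of Proposition~\ref{lnnada} (whose right-hand sides involve $\gamma_0(\mathscr B(\Omega))$ and $\gamma_0(\bar{\mathscr B}(\Omega))$ and the Calderón projector), the net effect on $\dim\ker - \dim\mathrm{coker}$ is $+1$ per crossing. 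A clean way to package this is the standard relative-index / spectral-flow statement: for a regular family of APS-type realizations of an elliptic first-order operator, the index changes by the spectral flow of the boundary operator through the cut value, which here reduces to counting the eigenvalues of $\Dd_0|_{\Gamma_j}$ in the interval between $0$ and $-\tilde\Phi_{V,j}$; cf.~\cite[Theorem 8.5]{goffbandara} as already invoked in the proof of Proposition~\ref{lemkmknknk}. Third, count: the number of $m\in\Z$ with $0 > \tfrac{2\pi}{L_j}m \geq -\tilde\Phi_{V,j}$ (oriented count, negative if $\Phi_{V_j}<0$) equals $\lceil -\Phi_{V_j}\rceil$, using $\tilde\Phi_{V,j}=\tfrac{2\pi}{L_j}\Phi_{V_j}$ and the convention~\eqref{eq:def-K}; summing over $j$ gives the stated formula.

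I expect the main obstacle to be the bookkeeping of the boundary condition for the \emph{cokernel} as the cut moves — i.e.\ making precise that the jump in $\ind$ really is $+\lceil-\Phi_{V_j}\rceil$ with the correct sign and the correct ceiling (rather than floor) convention, including the behavior at the degenerate values where $-\tilde\Phi_{V,j}$ lands exactly on a spectral point of $\Dd_0|_{\Gamma_j}$ (i.e.\ $\Phi_{V_j}\in\Z$), where one must be careful about strict versus non-strict inequalities in the definition~\eqref{eq:def-proj} of $\Pi_V$. One handles this either by a direct eigenvalue-flow argument on the explicit exponential eigenbasis of Proposition~\ref{speckkned} — perturbing $\vec\lambda$ slightly to a generic value, computing there, and checking semicontinuity — or, more conceptually, by invoking the relative index theorem for the homotopy $t\mapsto \Pi_0(t\cdot(-\tilde\Phi_V))$ exactly as in the proof of Proposition~\ref{lemkmknknk} but now keeping track of the finitely many $t$ at which the family fails to be locally constant modulo compacts, each contributing $\pm1$ to the index. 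Everything else — the direct-sum reduction, the identification of $\mathrm{spec}(\Dd_0|_{\Gamma_j})$, the final arithmetic — is routine given Proposition~\ref{speckkned} and the Fredholm setup of Proposition~\ref{lnnada}.
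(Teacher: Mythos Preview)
Your approach is correct and is essentially the paper's own argument: a relative index theorem applied boundary component by boundary component, combined with the explicit spectrum of $\Dd_0|_{\Gamma_j}$ from Proposition~\ref{speckkned}. The only difference is presentational: rather than tracking the index along the path $t\mapsto t\cdot(-\tilde\Phi_V)$ and counting crossings (spectral flow), the paper observes directly that for $\tilde\Phi_{V,j}\leq 0$ one has the inclusion $E_0(0)\subset E_0(-\tilde\Phi_{V,j})$ of boundary conditions, applies the relative index theorem once (citing \cite[Lemma~4.5]{bandararel}) to get $\ind = \ind + \dim\bigl(E_0(-\tilde\Phi_{V,j})/E_0(0)\bigr)$, and then reads off that quotient dimension as $\#\{m\in\Z:0\leq m<-\Phi_{V_j}\}=\AC{-\Phi_{V_j}}$; the case $\tilde\Phi_{V,j}>0$ swaps the roles. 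This sidesteps the crossing and ceiling/floor bookkeeping you flag as the main obstacle, and in particular handles the degenerate case $\Phi_{V_j}\in\Z$ automatically since the strict inequality in the definition of $\Pi_0$ is built into the count $0\leq m<-\Phi_{V_j}$.
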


\begin{proof}
We use a relative index theorem (see~\cite{goffbandara,boosss, melrose} for a detailed description of the underlying theory). The boundary condition in the definition of \(\slashed{D}^+_{\mathrm{APS},0}(\vec\lambda)\) is defined by the eigenspace \(E_0(\vec\lambda)\) (see~\eqref{eq:def-op-APS*}).

\textbf{Step 1.} (Simply connected case.)

In this case \(\tilde\Phi_V=2\pi\Phi_V/L\) and we identify \(\vec\lambda\) with its single component \(\lambda\).  Recall that \(E_0(\lambda)\) is the eigenspace of \(\Dd_0=-\ii\partial_s\) corresponding to eigenvalues less than \(\lambda\), and it is explicitly known from Proposition~\ref{speckkned} (with \(V=0\)). 

If we have \(\tilde\Phi_V\leq 0\) then \(E_0(0)\subset E_0(-\tilde\Phi_V)\); consequently (see for instance~\cite[Lemma 4.5]{bandararel})
\[
    \ind (\slashed{D}^+_{\mathrm{APS},0}(-\tilde\Phi_V)
    = 
    \ind (\slashed{D}^+_{\mathrm{APS},0}(0))+\dim\bigl(E_0(-\tilde\Phi_V)/E_0(0) \bigr).
\]
Now, by Proposition~\ref{speckkned}, we have
\[
    \dim\bigl(E_0(-\tilde\Phi_V)/E_0(0) \bigr)
    =
    \#\bigl(\{m\in\Z~:~0\leq m <-\Phi_V\}\bigr)
    =
    \AC{-\Phi_V}.
\]
The case $\tilde\Phi_V>0$ is proven analogously using \(E_0(0)\supset E_0(-\tilde\Phi_V)\) and interchanging the role of $\slashed{D}^+_{\mathrm{APS},0}(-\tilde\Phi_V)$ and $\slashed{D}^+_{\mathrm{APS},0}(0)$ in the relative index theorem.

\textbf{Step 2.} (Non-simply connected case.)

By the argument in Step 1: 
\[
    \ind \bigl(\slashed{D}^+_{\mathrm{APS},0}(-\tilde\Phi_V)\bigr)
    = 
    \ind \biggl(
        \slashed{D}^+_{\mathrm{APS},0}\Bigl(
            0,-\frac{2\pi}{L_1}\Phi_{V_1},\cdots,-\frac{2\pi}{L_d}\Phi_{V_d}\Bigr)
        \biggr)
    +\AC{-\Phi_{V_0} }.
\]
Iterating this we get the formula in Proposition~\ref{lnlnadla}.
\end{proof}

\subsection{Computing the APS-index}

Now we are reduced to the computation of $\ind (\slashed{D}^+_{\mathrm{APS},0}(0))$ for a general bounded, connected $C^\infty$-domain. We can start from the case of the unit disc. 

\begin{proposition}[Disc domain]\label{discomdomd}
For the unit disc $\Omega=D(0,1)$, we have that 
\[
    \ind (\slashed{D}^+_{\mathrm{APS},0}(0)) = 0.
\]
\end{proposition}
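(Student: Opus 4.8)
The plan is to derive this directly from Proposition~\ref{lnnada}, rather than from the full Atiyah--Patodi--Singer index formula. I would apply that proposition with $\phi=0$ and $V=0$; since $\slashed{D}^+_{\mathrm{APS},0}(0)=\slashed{D}^+_{0,\mathrm{APS},0}$ in the notation of~\eqref{eq:def-op-APS*}, the boundary restriction $\gamma_0$ then gives isomorphisms $\ker(\slashed{D}^+_{\mathrm{APS},0}(0))\cong E_0\cap\gamma_0(\mathscr B(D(0,1)))$ and $\ker((\slashed{D}^+_{\mathrm{APS},0}(0))^*)\cong(E_{-\kappa})^\perp\cap\gamma_0(\bar{\mathscr B}(D(0,1)))$, where $\kappa\equiv 1$ is the curvature of the unit circle $\Gamma=\partial D(0,1)$. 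So the entire task reduces to checking that these two finite-dimensional intersection spaces vanish; once that is done, $\ind(\slashed{D}^+_{\mathrm{APS},0}(0))=0-0=0$.

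For the kernel, I would parametrize $\Gamma$ by arc-length $s=\theta\in[0,2\pi)$ and use Proposition~\ref{speckkned} (with $L=2\pi$, $V=0$): the eigenfunctions of $\Dd_0=-\ii\partial_s$ are proportional to $\ee^{\ii m\theta}$ with eigenvalue $m$, so $E_0$ is the closed linear span of $\{\ee^{\ii m\theta}:m<0\}$. On the other hand, a holomorphic $H^1(D(0,1))$-function has boundary trace supported on nonnegative Fourier modes (integrate $F(z)z^{k-1}$ around the circle and apply Cauchy's theorem for $k\geq 1$), so $\gamma_0(\mathscr B(D(0,1)))$ lies in the closed span of $\{\ee^{\ii m\theta}:m\geq 0\}$. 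These spans intersect trivially, so the kernel is $\{0\}$.

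For the cokernel, the only point requiring care is the curvature shift in the adjoint boundary condition $\gamma_0(u)\in(E_{V-\kappa})^\perp$. I would again use Proposition~\ref{speckkned}: for $\Dd_{-\kappa}=-\ii\partial_s-1$ one has $\Phi_{-\kappa}=\frac1{2\pi}\int_\Gamma(-1)\dd s=-1$, so the eigenvalues are $m-1$ while the eigenfunctions remain proportional to $\ee^{\ii m\theta}$; hence $E_{-\kappa}$ is the closed span of $\{\ee^{\ii m\theta}:m\leq 0\}$ and $(E_{-\kappa})^\perp$ that of $\{\ee^{\ii m\theta}:m\geq 1\}$. Since traces of antiholomorphic $H^1$-functions on the disc are supported on nonpositive Fourier modes (conjugate the previous paragraph), this intersection is again $\{0\}$, so the cokernel vanishes.

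None of these steps is a genuine obstacle: after Proposition~\ref{lnnada}, the argument is pure Fourier-series bookkeeping on the circle. The subtle point is to track the total curvature correctly --- Gauss--Bonnet gives $\int_\Gamma\kappa\,\dd s=2\pi$, producing the effective flux $-1$ in $\Dd_{-\kappa}$ --- because that is precisely what shifts the constant function into $E_{-\kappa}$ and makes the cokernel vanish rather than be one-dimensional. Alternatively one could compute $\ind(\slashed{D}^+_{\mathrm{APS},0}(0))$ from the APS index theorem directly, balancing an interior integral against the $\eta$-invariant, the dimension of $\ker\Dd_0$, and a transgression term, but the Fourier computation above is shorter and more transparent.
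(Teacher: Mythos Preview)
Your proposal is correct and follows essentially the same route as the paper: apply Proposition~\ref{lnnada} with $\phi=0$, $V=0$, and use Fourier series on the unit circle to see that both $E_0\cap\gamma_0(\mathscr B(D(0,1)))$ and $(E_{-\kappa})^\perp\cap\gamma_0(\bar{\mathscr B}(D(0,1)))$ vanish. The paper's proof is the same computation, written with the explicit Fourier description of $\gamma_0(\mathscr B(\Omega))$ and $\gamma_0(\bar{\mathscr B}(\Omega))$ and with $E_{-\kappa}$ denoted $E_{-1}$.
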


\begin{proof}
By Fourier series,  we have for the unit disc
\[
    \begin{aligned}
        \gamma_0(\mathscr B(\Omega)) 
        & = \Bigl\{\sum_{n\geq 0} \sqrt{n+1}\, a_n\ee^{\ii ns}~:~(a_n)_{n\geq 0}\subset \ell^2(\C)\Bigr\},\\
        \gamma_0(\bar{\mathscr B}(\Omega))
        & = \Bigl\{\sum_{n\leq 0} \sqrt{|n|+1}\, a_n \ee^{\ii ns}~:~(a_n)_{n< 0}\subset \ell^2(\C)\Bigr\}.
    \end{aligned}
\]
Thus, we get from Proposition~\ref{speckkned},
\[
    \begin{aligned}
        E_0\cap\gamma_0(\mathscr B(\Omega))
        & = \overline{\Span \bigl(\{f_{m,0}~:~m\in\Z,m\geq 0,\mu_m(0,2\pi)<0\}\bigr)}=\{0\},\\
        E_{-1}^\perp\cap\gamma_0(\bar{\mathscr B}(\Omega))
        & =\overline{\Span \bigl(\{f_{m,-1}~:~m\in\Z,m \leq 0,\mu_m(-1,2\pi)\geq 0\}\bigr)} = \{0\}.
    \end{aligned}
\]
Now it follows by Proposition~\ref{lnnada}
\[
    \ind \bigl(\slashed{D}^+_{\mathrm{APS},0}(0)\bigr)
    =
    \dim\bigl(E_0\cap\gamma_0(\mathscr B(\Omega))\bigr)
    -
    \dim\bigl(E_{-1}^\perp\cap\gamma_0(\bar{\mathscr B}(\Omega))\bigr)
    =0.
    \qedhere
\]
\end{proof}

With Proposition~\ref{discomdomd} in hand we can apply a general index theorem to deal with arbitrary bounded, connected domains.

\begin{proposition}[General  connected domain]\label{lemknknknadad}
If $\Omega\subseteq \C$ is a bounded, connected domain with smooth boundary \(\Gamma\) consisting of  \(d+1\) simple closed curves,  then 
\[
    \ind(\slashed{D}^+_{\mathrm{APS},0}(0))=-d.
\]
In particular,  if \(\Omega\) is simply connected,  \(\ind(\slashed{D}^+_{\mathrm{APS},0}(0))=0\).
\end{proposition}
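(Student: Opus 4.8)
The plan is to compute $\ind(\slashed{D}^+_{\mathrm{APS},0}(0))$ for a general bounded connected domain $\Omega$ with $d+1$ boundary components by relating it to the disc case, which was handled in Proposition~\ref{discomdomd}, via a deformation/gluing argument combined with the Atiyah--Patodi--Singer index theorem applied as a black box. The key point is that $\slashed{D}^+_{\mathrm{APS},0}(0) = 2\partial_{\bar{z}}$ with the APS boundary condition given by the spectral projection of the boundary Dirac operator $\Dd_0 = -\ii\partial_s$; since $\Pi_0(0)$ differs from the Calder\'on projector only by a smoothing operator (Lemma~\ref{charpiv}), this is precisely the APS realization of a twisted $\bar\partial$-operator, and the APS index theorem expresses its index as an integral of a local curvature/characteristic form over $\Omega$ plus boundary contributions (the $\eta$-invariant and a boundary integral involving the second fundamental form).

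First I would set up the APS index formula for $\slashed{D}^+_{\mathrm{APS},0}(0)$. Because the coefficient bundle is trivial (we are looking at the untwisted $\bar\partial$, i.e.\ $\phi = 0$, having already used~\eqref{eq:index-phi=0} to reduce to this case), the interior integrand is the purely geometric one — an integral of (a multiple of) the Gaussian curvature over $\Omega$ — and the boundary term combines the integral of the geodesic curvature $\kappa$ along $\Gamma$ with an $\eta$-type correction from the tangential operator $\Dd_0$ (together with the half-dimension-of-kernel term, which by Proposition~\ref{speckkned} and Remark~\ref{rem:op-D-V} contributes since $\Phi_0 = 0 \in \Z$ on each component). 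The cleanest route is not to evaluate every piece by hand but to invoke the \emph{relative} index theorem: the difference $\ind(\slashed{D}^+_{\mathrm{APS},0}(0)) - (\text{disc value})$ is insensitive to the geometry in the interior and is governed only by how the boundary components sit together, so one argues that adding a handle (equivalently, passing from a domain with $k$ boundary components to one with $k+1$) changes the index by exactly $-1$.

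The concrete mechanism I would use: realize $\Omega$ via a cobordism/gluing argument, or more elementarily observe that the APS index is additive in a suitable sense and that the model computation on an annulus gives $-1$ (relative to two discs). Alternatively — and this is probably the slickest — one notes that $\slashed{D}^+_{\mathrm{APS},0}(0)$ is, up to the smoothing correction from Lemma~\ref{charpiv}, the operator $2\partial_{\bar z}$ with the Calder\'on (Hardy/Bergman) projector boundary condition, and for a planar domain the index of $\bar\partial$ with this "holomorphic" boundary condition is a classical computation: $\ker$ is $\gamma_0(\mathscr B(\Omega))$ intersected with the negative eigenspace $E_0$, cokernel is the anti-holomorphic analogue, and on a multiply connected planar domain the Fourier/Laurent analysis on each boundary circle (as in the disc case) gives $\dim\ker = 0$ and $\dim\mathrm{coker} = d$, coming from the $d$ independent "periods" around the inner boundary components. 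Summing over the $d+1$ components as in Proposition~\ref{speckkned} and Remark~\ref{rem:Dirac-dc}, the outer component contributes $0$ to each of kernel and cokernel (as for the disc), while each of the $d$ inner components contributes a one-dimensional piece to the cokernel and nothing to the kernel, yielding $\ind = 0 - d = -d$.

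The main obstacle I expect is the bookkeeping around orientations and the sign of $\kappa$ on the inner boundary components: the inward normal $\nu$ points in opposite "senses" relative to the enclosed region on the outer versus inner curves, so $\int_{\Gamma}\kappa\,\dd s = 2\pi(1-d)$ (Gauss--Bonnet for a domain of Euler characteristic $1-d$) rather than $2\pi$, and one must track this consistently through the duality of Proposition~\ref{prop:duality-proj} (which uses $\int_{\Gamma_j}\kappa\,\dd s$ on each component) and through the identification $\mu_m(V,L) = \mu_{m+1}(V-\kappa,L)$. Getting the $\AC{\cdot}$ shifts and the relative-index contributions to land with the correct sign on every component — so that exactly the $d$ inner components produce the $-d$ — is where care is needed; the interior curvature integral and the $\eta$-invariant contributions must be checked to cancel as they do in the disc case, leaving only this topological count. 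Once the disc case (Proposition~\ref{discomdomd}) is used as the normalization, the homotopy invariance of the APS index (\cite[Theorem 8.5]{goffbandara}) together with a deformation collapsing the geometry to a standard model does the rest.
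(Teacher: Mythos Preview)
Your outline gestures at the right overall strategy --- apply the APS index formula and use the disc case to pin down the undetermined constant --- and this is exactly what the paper does. But your sketch is vague on the key simplifications, and one of your proposed shortcuts has a real gap.

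The paper's actual argument is concrete: write $2\partial_{\bar z}$ in boundary-normal coordinates $(s,x_\n)$ to verify the hypothesis of Grubb's APS formula, then apply that formula. Since the metric on $\Omega\subset\C$ is \emph{flat}, there is no interior curvature integral (your ``integral of the Gaussian curvature over $\Omega$'' is a red herring --- it vanishes identically), and one obtains
\[
\ind(\slashed{D}^+_{\mathrm{APS},0}(0)) = \int_\Gamma b_0\,\dd s - \frac{\eta(0)+\dim\ker(-\ii\partial_s)}{2}.
\]
The spectrum of $-\ii\partial_s$ on each $\Gamma_j$ is $\frac{2\pi}{L_j}\Z$, symmetric about $0$, so $\eta(0)=0$; and $\dim\ker(-\ii\partial_s)=d+1$. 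Scaling considerations force $b_0=c\,\kappa$ for a universal constant $c$, and Gauss--Bonnet gives $\int_\Gamma\kappa\,\dd s=2\pi(1-d)$ (your orientation remark is correct and needed here). Hence $\ind=2\pi(1-d)c-\tfrac{d+1}{2}$; the disc value $0$ from Proposition~\ref{discomdomd} fixes $c=1/4\pi$, and substituting back yields $-d$. No homotopy to a ``standard model'' is used or needed.

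Your ``slickest'' route --- computing $\ker$ and $\mathrm{coker}$ directly by Fourier/Laurent analysis on each boundary circle --- does not work as stated. On a multiply connected domain the traces of elements of $\mathscr B(\Omega)$ (and of $\bar{\mathscr B}(\Omega)$) do \emph{not} decouple by boundary component: the restriction to one $\Gamma_j$ constrains the restriction to the others, so the assertion that ``each inner component contributes a one-dimensional piece to the cokernel and nothing to the kernel'' is unsupported. The ``$d$ independent periods'' heuristic is suggestive but is not a proof for the specific APS boundary condition in play; making it rigorous amounts to redoing the index computation. The gluing/handle-attachment alternative is likewise too vague to stand as an argument.
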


\begin{proof}
The direct frame \((\tau,\nu)\) gives us natural coordinates \((s,x_\n)\) valid in a collar neighborhood of the boundary, where \(s\) denotes the arc-length coordinate along \(\Gamma\) and \(x_\n\) is the normal variable,  so that \(x_n=0\) defines the boundary,  and inside the \(\Omega\) we have \(x_n>0\) (see e.g.,~\cite[Appendix~F]{FH-b})).

The operator \(\slashed{D}^+_{\mathrm{APS},0}(0)=2\partial_{\bar z}\) can be expressed in the coordinates \((s,x_\n)\) as
\[
    2\partial_{\bar z}
    = \partial_{1}+\ii \partial_{2}
    = \tau\bigl((1-x_\n\kappa(s))^{-1}\partial_s\bigr)+\nu(\partial_{x_\n})
\]
where \(\tau,\nu\) are viewed as the complex numbers \(\tau_1+\ii\tau_2,\nu=\nu_1+\ii\nu_2\) and \(\partial_1,\partial_2\) denote the partial differentiation with respect to the Cartesian coordinates. Recalling that \(\nu=\ii\tau\) by~\eqref{eq:nu=J-tau},  we eventually get\footnote{Note also that a function \(u\) in the domain \(\ind (\slashed{D}^+_{\mathrm{APS},0}(0))\)  must obey the condition \(\Pi_{\geq0}\gamma_0(u)=0\) where, in the notation of~\cite[Eq.~(1.2)]{ggrubb},  \(\Pi_{\geq 0}\) is the  orthogonal projection on the eigenspace defined by the non-negative eigenvalues of the boundary operator \(-\ii\partial_s\).}
\[
    2\partial_{\bar z}
    =
    \ii\tau \bigl(\partial_{x_\n} -\ii (1-x_\n\kappa(s))^{-1}\partial_s\bigr).
\]
So the condition in~\cite[Eq.~(1.4)]{ggrubb} holds, and we can apply Grubb's APS index formula~\cite[Theorems~1.2~\&~1.3]{ggrubb}. Since we are using the flat metric on $\C$, it follows  that 
\[
    \ind (\slashed{D}^+_{\mathrm{APS},0}(0))
    =
    \int_{\Gamma}b_0\dd s -\frac{\eta(0)+\dim\ker (-\ii\partial_s)}{2},
\]
where $b_0$ is a local term defined from universal invariant polynomial in the second fundamental form and its covariant derivatives along the normal to the boundary, and \(\eta(0)\) is the \(\eta\) invariant of  \(\Dd_0=-\ii\partial_s\), defined as follows. For \(\mathrm{Re}(a)>1\), we let
\[
    \eta(a) = \sum_{\lambda_m\not=0}\mathrm{sgn}(\lambda_m)|\lambda_m|^{-a},
\]
and extend by meromorphicity to the complex plane. Here \(\lambda_m\) are the  eigenvalues of \(\Dd_0\) which can be retrieved from Proposition~\ref{speckkned} by observing that \(\Dd_0=\dsum \Dd_0|_{\Gamma_j}\),   where \((\Gamma_j)_{j=0}^d\) are the connected components of \(\Gamma\). Since the spectrum of each $\Dd_0|_{\Gamma_j}$ is symmetric around the origin, we find \(\eta(a)=0\) for \(\mathrm{Re}(a)>1\) by evaluating the sum defining \(\eta(s)\) on each connected component separately. In particular, $\eta(0)=0$.
Moreover, we have (in \(L^2(\Gamma)\cong\dsum L^2(\Gamma_j)\))
\[
    \dim\ker (-\ii\partial_s)=1+d.
\]
Finally, from scaling considerations, it follows that there is a universal real constant $c$ such that $b_{0}=c\,\kappa$, where $\kappa$ denotes the curvature of $\Gamma$. The universality of $c$ means that it is independent of the domain. By the Gauss--Bonnet theorem, we have
\[
    \int_\Gamma\kappa(s)\dd s
    =
    \int_{\Gamma_0}\kappa(s)\dd s
    + 
    \sum_{j=1}^d\int_{\Gamma_j}\kappa(s)\dd s
    =
    2\pi -2\pi d,
\]
where \(\Gamma_0\) is the outer boundary of the domain \(\Omega\); the negative term is due to the positive orientation in the interior boundary where \(\nu\) points inward \(\Omega\) (so outward the domain enclosed by the curve  \(\Gamma_j\)).

Therefore, summing all contributions, we obtain
\[
    \ind (\slashed{D}^+_{\mathrm{APS},0}(0))
    =
    2\pi(1-d)c - \frac{1+d}{2}.
\]
Universality of the constant $c$ implies, after comparison with the case of the unit disc, Proposition~\ref{discomdomd}, that \(c=1/4\pi\). Substituting into the foregoing index formula finishes the proof.
\end{proof}

Now we conclude the paper by finishing the proof of our main theorem.

\begin{proof}[Proof of Theorem~\ref{thm:main}]
	As we mentioned earlier, we assume that the magnetic potential  \(\Ab\) is given by~\eqref{eq:def-A} and refer to Remark~\ref{rem:generalmagn} below for the general case.
	
By~\eqref{eq:N->N-O} and Remark~\ref{rem:index},  we have that
\[
    N(\Hb_{\Ab,g},0)
    \geq
    \ind (\slashed{D}_{\phi,\mathrm{APS}, V}^+)
\]
with \(V=g-A_\tau\) and \(A_\tau\) introduced in~\eqref{eq:def-a}.

We next collect~\eqref{eq:index-phi=0} and the conclusions in Propositions~\ref{lemkmknknk} and~\ref{lnlnadla}, to obtain
\[ 
    \ind (\slashed{D}_{\phi,\mathrm{APS}, V}^+)
    =
    \ind (\slashed{D}^+_{\mathrm{APS},0}(0))+\sum_{j=0}^d\AC{-\Phi_{V_j}},
\]
where \(\ind (\slashed{D}^+_{\mathrm{APS},0}(0))=-d\) by Proposition~\ref{lemknknknadad}.  Thus, remembering the definition of \(V\) above, we get that
\[
    N(\Hb_{\Ab,g},0)
    \geq -d
    +\sum_{j=0}^d\AC{-\Phi_{V_j}}=-d+ \sum_{j=0}^d \AC{\Phi_j-\Phi_{g,j}}.\qedhere
\]
\end{proof}

\begin{remark}\label{rem:lpb}
The proof  above extends to the case $B\in L^p(\Omega,\R)$ for $p>2$. Elliptic regularity implies that $\phi\in W^{2,p}(\Omega,\R)$, and so $\Ab\in W^{1,p}(\Omega,\R^2)$. The trace theorem implies that $A_\tau:=\tau\cdot \Ab|_\Gamma\in W^{1-\frac{1}{p},p}(\Gamma,\R)$ and by the Sobolev embedding theorem, $A_\tau\in C(\Gamma,\R)$ if $p>2$. 

The results in Section~\ref{sec:useful},~\ref{sec:Dirac}, and~\ref{sec:eginnad} carry through ad verbatim for $B\in L^p(\Omega,\R)$, $p>2$, and $A_\tau\in C(\Gamma,\R)$. Section~\ref{sec:index} is a bit more subtle, where Proposition~\ref{lnnada} and~\ref{lemkmknknk} uses some regularity. Their proofs extend since $\ee^{i\Theta_{A_\tau}}\in C^1(\Gamma,\R)$ and it is sufficient that $\ee^{i\Theta_{A_\tau}}\in C^{1/2+\epsilon}(\Gamma,\R)$ for $[\Pi_0(0),\ee^{i\Theta_{A_\tau}}]$ to be compact on $H^{s}(\Gamma)$ for $s\in (-1/2-\epsilon,1/2+\epsilon)$~\cite[Theorem 1.4]{gimpgoff}, and in particular \(\Pi^{\perp}_V\) differs from the Calder\'on projector by an operator compact on $H^{1/2}(\Gamma)$ and $H^{-1/2}(\Gamma)$ by Lemma~\ref{charpiv}.
\end{remark}

\begin{remark}\label{rem:lowome}
Extending the index theoretical proof of Theorem~\ref{thm:main} to the case when $\Omega$ is non-smooth is harder. This is due to the fact that Atiyah--Patodi--Singer index theory has only been developed in sufficiently large degree of regularity of $\partial\Omega$. We note that Lemma~\ref{eq:confinv} below can be combined with the discussion above to show that Theorem~\ref{thm:main} holds when $\Omega$ is simply connected with Dini-smooth boundary and $B\in L^p(\Omega)$ for $p>2$. 
\end{remark}

\begin{remark}\label{rem:generalmagn}
As discussed in Remark~\ref{generapotential}, the proof of Theorem~\ref{thm:main} above extends to general $\Ab\in C^\infty(\overline{\Omega},\R^2)$. The generalization depends on replacing $\ee^{-\phi}\partial_{\bar{z}}\ee^\phi$ with $\slashed{D}_{\Ab}^+:=\partial_{\bar{z}}-i\overline{\Ab}$, which since it does not change the principal symbol affects little in the general theory of first order elliptic differential operators we are invoking. Indeed, the proof of Proposition~\ref{prop:identity} extends ad verbatim when replacing $\ee^{-\phi}\partial_{\bar{z}}\ee^\phi$ with $\slashed{D}_{\Ab}^+$. If we further replace the Bergman space $\mathscr B(\Omega)$ with the $L^2$-kernel of $\slashed{D}_{\Ab}^+$ in the discussions in Subsections~\ref{subsec:newaux},~\ref{subsec:gendom} and~\ref{subsec:intermezzo}, analogous statements as in~\eqref{eq:N->N-O} and Remark~\ref{rem:index} lead to the lower bound
\[
    N(\Hb_{\Ab,g},0)
    \geq
    \ind (\slashed{D}_{\Ab,\mathrm{APS}, V}^+)
\]
Here $\slashed{D}_{\Ab,\mathrm{APS}, V}^+$ denotes the differential operator $\slashed{D}_{\Ab}^+$ equipped with the APS-boundary condition~\eqref{eq:def-dom-APS} defined from $E_V$ (see~\eqref{eq:def-E-V}) where $V=g-\tau\cdot \Ab$. To prove Theorem~\ref{thm:main} for general $\Ab$, it therefore suffices to have the index formula
\begin{equation}\label{indexforgena}
    \ind (\slashed{D}_{\Ab,\mathrm{APS}, V}^+)
    =
    -d+\sum_{j=0}^d\AC{\Phi_{j}-\Phi_{g,j}}.
\end{equation}
Using homotopy invariance and relative index theory as in Subsection~\ref{subsec:relhomo}, we can without loss of generality assume $g=0$.

While less conceptual than the proof above for the Dirichlet gauge, the index formula~\eqref{indexforgena} follows from Grubb's APS-index formula~\cite{ggrubb}. The latter implies that
\begin{equation}\label{grubbsaps}
    \ind (\slashed{D}_{\Ab,\mathrm{APS}, V}^+)
    =
    \frac{1}{2\pi} \int_\Omega B\dd x+\int_{\Gamma}b_0\dd s(x) -\frac{\eta_{\Dd_V}(0)+\dim\ker \Dd_V}{2},
\end{equation}
where the $\eta$-invariant $\eta_{\Dd_V}(0)$ is the value at $s=0$ of the meromorphic function $\eta_{\Dd_V}$, defined for $\re s>1$ by
\[
    \eta_{\Dd_V}(s):=\sum_{\lambda\in \mathrm{Spec}(\Dd_V)\setminus \{0\}} \mathrm{sign}(\lambda)|\lambda|^{-s},
\]
with eigenvalues counted by multiplicity. Moreover, $b_0$ is a local term defined from a universal invariant polynomial in the second fundamental form and curvatures. By well known techniques from invariance theory~\cite{gilkey}, degree reasons and the low dimensionality we know that $b_0=c\kappa+c'\partial_{x_\n}(\dd x_\n\neg (B\dd x_1\wedge \dd x_2))$, where $\kappa$ denotes the curvature of $\Gamma$ and $c,c'$ are universal constants. The value $c=1/4\pi$ is deduced from Lemma~\ref{lemknknknadad} and $c'=0$ can be computed from Proposition~\ref{prop:disc-cst}. In particular, we have $\int_{\Gamma}b_0\dd s(x)=(1-d)/2$ by the Gauss--Bonnet theorem. A computation with Hurwitz zeta functions~\cite[p.\ 264, p.\ 268]{Ap} and Proposition~\ref{speckkned} implies that 
\[
    \frac{\eta_{\Dd_V}(0)+\dim\ker \Dd_V}{2}=\frac{1+d}{2}+\sum_{j=0}^d \left( \Phi_j-\AC{\Phi_{j}} \right).
\]
Summing up the terms in~\eqref{grubbsaps} and using $\Phi=\sum_{j=0}^d \Phi_j$ we conclude~\eqref{indexforgena}.
\end{remark}

\section{ Proof of Theorem~\ref{thm:main} using a trace identity}\label{sec:traceproof}

In this section we provide an alternative proof of the first part of Theorem~\ref{thm:main} in the simply connected case.   It is based on the identity from Proposition~\ref{prop:identity}, but instead of index theory we use a trace identity for the Benjamin--Ono equation. In this approach the simple connectivity of $\Omega$ is used and is assumed throughout this section.

\subsection{Reduction to the unit disc}

By the Riemann mapping theorem, there is a biholomorphic $F:D(0,1)\to\Omega$.  When $\Omega$ has sufficiently smooth boundary, $F$ together with its derivatives extends continuously to $\overline{D(0,1)}$, and this extension induces a $C^1$-diffeomorphism $f:\partial D(0,1)\to\Gamma = \partial\Omega$ (see~\cite[Thm. 2.1]{Po}).  Define
\begin{equation}\label{eq:tildeonb}
    \tilde B \coloneqq |F'|^2 \ (B\circ F) : D(0,1) \to\R \,,
    \quad
    \tilde g \coloneqq |f'| \ (g\circ f) : \partial D(0,1)\to\R \,.
\end{equation}
Then, by a change of variables,
\[
    \frac1{2\pi} \int_\Omega B(x)\dd x = \frac1{2\pi} \int_{D(0,1)} \tilde B(x)\dd x \,,
    \quad
    \frac1{2\pi} \int_\Gamma g\dd s = \frac1{2\pi} \int_{\partial D(0,1)} \tilde g\dd s \,. 
\]
The following lemma uses these identities to show that the number of negative eigenvalues does not change when passing from the triple $(\Omega,B,g)$ to the triple $(D(0,1),\tilde B,\tilde g)$. 

\begin{lemma}\label{eq:confinv}
If $\Omega\subseteq \R^2$ is bounded,   simply connected domain bounded by a simple closed Dini-smooth Jordan curve, then we have 
\[
    N(\Hb_{\Ab,g},0) = N(\mathscr{H}^{D(0,1)}_{\tilde \Ab,\tilde g},0),
\]
where $\tilde B$ and $\tilde g$ are defined as in~\eqref{eq:tildeonb} from the Riemann mapping theorem.
\end{lemma}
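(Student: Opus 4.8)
The plan is to carry out the conformal change of variables at the level of \emph{quadratic forms}, not operators: composition with the Riemann map is not unitary between the two $L^2$-spaces (a conformal weight $|F'|^2$ appears), so the two operators are genuinely not unitarily equivalent, yet the signatures of the forms — and hence the negative eigenvalue counts — will be shown to agree. Write $F:D(0,1)\to\Omega$ for the Riemann map, with variable $w$ on $D(0,1)$ and $z$ on $\Omega$. Under the Dini-smoothness hypothesis, \cite[Thm.~2.1]{Po} (already quoted above) gives that $F$ extends to a $C^1$-diffeomorphism $\overline{D(0,1)}\to\overline\Omega$ with $F'$ continuous and nowhere vanishing on $\overline{D(0,1)}$, and that $f:=F|_{\partial D(0,1)}$ is a $C^1$-diffeomorphism onto $\Gamma$. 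Hence $u\mapsto U:=u\circ F$ is a linear isomorphism of form domains $H^1(\Omega)\to H^1(D(0,1))$ with bounded inverse, and it suffices to establish the pointwise identity
\begin{equation}\label{eq:conf-form-id}
    q_{\Ab,g}(u)=q^{D(0,1)}_{\tilde\Ab,\tilde g}(U),\qquad u\in H^1(\Omega),
\end{equation}
where $q^{D(0,1)}_{\tilde\Ab,\tilde g}$ is the form of $\mathscr{H}^{D(0,1)}_{\tilde\Ab,\tilde g}$, $\tilde\Ab$ is the gauge \eqref{eq:def-A} attached to $\tilde B$ on $D(0,1)$ and $\tilde\phi$ the corresponding solution of \eqref{eq:SuperPot}. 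Once \eqref{eq:conf-form-id} is known, $\mathcal M\mapsto U(\mathcal M)$ is a dimension-preserving bijection between subspaces of $H^1(\Omega)$ on which $q_{\Ab,g}<0$ and subspaces of $H^1(D(0,1))$ on which $q^{D(0,1)}_{\tilde\Ab,\tilde g}<0$, so Glazman's lemma \eqref{eq:glazman} gives the asserted equality of counting functions. By density and $H^1$-continuity of both forms (as in the proof of Proposition~\ref{prop:identity}) it is enough to prove \eqref{eq:conf-form-id} for $u\in H^2(\Omega)$.

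To prove \eqref{eq:conf-form-id} I would use the splitting of Proposition~\ref{prop:identity}. First, the scalar potential transforms covariantly: since $F$ is conformal, $\Delta(\phi\circ F)=|F'|^2\,(B\circ F)=\tilde B$ in $D(0,1)$, and $\phi\circ F$ vanishes on $\partial D(0,1)$, so uniqueness for \eqref{eq:SuperPot} gives $\tilde\phi=\phi\circ F$. For the interior term, the Wirtinger chain rule $\partial_{\bar w}(h\circ F)=\overline{F'}\,(\partial_{\bar z}h)\circ F$ (valid because $F$ is holomorphic), applied to $h=\ee^{\phi}u$, yields $\ee^{-\tilde\phi}\partial_{\bar w}(\ee^{\tilde\phi}U)=\overline{F'}\,\bigl[(\ee^{-\phi}\partial_{\bar z}\ee^{\phi}u)\circ F\bigr]$; on taking squared moduli the factor $|F'|^2$ exactly absorbs the Jacobian $|F'|^{-2}$ of the change of variables $z=F(w)$, whence $\|\ee^{-\tilde\phi}\partial_{\bar w}\ee^{\tilde\phi}U\|^2_{L^2(D(0,1))}=\|\ee^{-\phi}\partial_{\bar z}\ee^{\phi}u\|^2_{L^2(\Omega)}$. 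For the boundary term, with $s$ the arc length on $\Gamma$ and $t$ the arc length on $\partial D(0,1)$ one has $\dd s=|f'|\,\dd t$; the $g$-contribution then becomes $\int_\Gamma g|u|^2\dd s=\int_{\partial D(0,1)}(g\circ f)\,|f'|\,|\gamma_0U|^2\,\dd t=\int_{\partial D(0,1)}\tilde g\,|\gamma_0U|^2\,\dd t$ by the very definition \eqref{eq:tildeonb} of $\tilde g$. For the magnetic (current) contribution, the slick route is to observe that for the gauge \eqref{eq:def-A} the circulation $1$-form equals $\Ab\cdot\dd\mathbf x:=A_1\,\dd x_1+A_2\,\dd x_2=\star\,\dd\phi$, with $\star$ the Hodge star of the flat metric; since the Hodge star on $1$-forms is conformally invariant in dimension two and $F$ is an orientation-preserving conformal map, $F^*(\Ab\cdot\dd\mathbf x)=\star\,\dd(\phi\circ F)=\star\,\dd\tilde\phi=\tilde\Ab\cdot\dd\mathbf x$, which on the boundary is precisely the scalar identity $\tilde A_\tau=(A_\tau\circ f)\,|f'|$ for the tangential components. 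Substituting $\dd s=|f'|\,\dd t$, $\partial_s=|f'|^{-1}\partial_t$ and this identity into the boundary pairing $\int_\Gamma(-\ii\partial_s-A_\tau)(\gamma_0u)\,\overline{\gamma_0u}\,\dd s$ from Remark~\ref{rem:bd-op} turns it into $\int_{\partial D(0,1)}(-\ii\partial_t-\tilde A_\tau)(\gamma_0U)\,\overline{\gamma_0U}\,\dd t$, i.e.\ the corresponding disc current integral. Adding the three pieces yields \eqref{eq:conf-form-id}.

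The main obstacle is regularity bookkeeping rather than the algebra. First, one must confirm that the Dini-smoothness hypothesis is exactly what is needed for $F$ to be a $C^1$-diffeomorphism up to the boundary with nowhere vanishing derivative — this is what simultaneously legitimizes the claim that $u\mapsto u\circ F$ is an isomorphism of the $H^1$ form domains and every pointwise chain-rule and change-of-variables step above — and here one leans squarely on \cite{Po}. Second, since a priori $\gamma_0 u\in H^{1/2}(\Gamma)$ and the current integral is only a duality pairing, the arc-length reparametrization identities for the boundary term must be established first for $u\in H^2(\Omega)$, where everything is classical, and then propagated to $H^1(\Omega)$ by density together with the $H^1$-continuity of $q_{\Ab,g}$ — exactly the mechanism already used for Proposition~\ref{prop:identity}. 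Beyond these two points I do not anticipate any genuinely new difficulty.
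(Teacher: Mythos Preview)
Your proof is correct, but it takes a genuinely different route from the paper's own argument. The paper works directly with the \emph{original} quadratic form $\int_\Omega(|(-\ii\nabla-\Ab)u|^2-B|u|^2)\dd x+\int_\Gamma g|u|^2\dd s$: it defines $\tilde\Ab\coloneqq(DF)^{\mathrm T}(\Ab\circ F)$, checks via the Cauchy--Riemann equations that $\curl\tilde\Ab=\tilde B$ and that $|(-\ii\nabla-\tilde\Ab)(u\circ F)|^2=|F'|^2\,(|(-\ii\nabla-\Ab)u|^2\circ F)$, and then the Jacobian $|F'|^2$ in the area change of variables does the rest; Glazman's lemma is applied exactly as you do. You instead pass through the reformulated form of Proposition~\ref{prop:identity}, transport the scalar potential by $\tilde\phi=\phi\circ F$, and exploit the Wirtinger chain rule $\partial_{\bar w}(h\circ F)=\overline{F'}\,(\partial_{\bar z}h)\circ F$ together with the conformal invariance of the Hodge star on $1$-forms to move both the interior $\|\ee^{-\phi}\partial_{\bar z}\ee^\phi u\|^2$ term and the boundary current term. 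The paper's approach is shorter and entirely self-contained (it does not invoke Proposition~\ref{prop:identity}, so the lemma stands independently of that identity); your approach is a bit longer but dovetails with the scalar-potential gauge used throughout the paper, and in fact your $\tilde\Ab$ defined from $\tilde\phi$ coincides with the paper's pullback $(DF)^{\mathrm T}(\Ab\circ F)$ when $\Ab$ is in the gauge~\eqref{eq:def-A}. Your regularity bookkeeping (Dini-smoothness $\Rightarrow$ $C^1$-diffeomorphism up to the boundary, density argument for the boundary pairing) is appropriate and matches what the paper uses.
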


\begin{proof}
First, for the continuous extension of the Riemann mapping $F$ to a map from $\overline{D(0,1)}$ to $\overline\Omega$, it suffices that $\Omega$ is bounded by a simple closed Jordan curve, and in this case the extension of $F$ maps $\partial D(0,1)$ homeomorphically to $\Gamma=\partial\Omega$. Next, by~\cite[Theorem 3.5]{Po}, if $\Omega$ is bounded by a simple closed Dini-smooth Jordan curve, then $F'$ extends continuously to the boundary.

We set
\[
    \tilde \Ab \coloneqq (DF)^{\mathrm{T}} (\Ab \circ F) \,,
\]
where $DF$ is the $2\times 2$ Jacobi matrix of $F$. A computation based on the Cauchy--Riemann equations for $F$ shows
\[
    \curl\tilde \Ab = \tilde B \,.
\]
Given $\psi\in H^1(\Omega)$, let
\[
    \tilde\psi \coloneqq \psi \circ F \,.
\]
Then a simple computation shows that $\tilde\psi\in H^1(D(0,1))$ and $(-i\nabla-\tilde \Ab)\tilde\psi = (DF)^{\mathrm{T}} (((-i\nabla -\Ab)\psi)\circ F)$, and so, by the Cauchy--Riemann equations,
\[
    |(-i\nabla-\tilde \Ab)\tilde\psi|^2 
    = 
    |F'|^2 (|(-i\nabla -\Ab)\psi|^2 \circ F).
\]
Thus, by a change of variables,
\[
    \int_\Omega |(-i\nabla-\Ab)\psi|^2\dd x 
    = 
    \int_{D(0,1)} |(-i\nabla -\tilde \Ab)\tilde\psi|^2\dd x \,.
\]
Similarly, $\tilde B|\tilde\psi|^2 = |F'|^2 ((B|\psi|^2)\circ F)$ implies
\[
    \int_\Omega B |\psi|^2\dd x 
    = 
    \int_{D(0,1)} \tilde B |\tilde\psi|^2\dd x \,.
\]
Applying Glazman's lemma~\eqref{eq:glazman} twice, once with $T=\Hb_{\Ab,g}$ and once with $T=\mathscr{H}^{D(0,1)}_{\tilde B,\tilde g}$, we obtain the claimed equality.
\end{proof}

\begin{remark}
Further conditions in terms of H\"older regularity of $F$ are given by the Kellogg--Warschawski theorem~\cite[Theorem 3.6]{Po}.
\end{remark}

\subsection{Analysis on the boundary of the unit disc}

The conclusion of Lemma~\ref{eq:confinv} is that it suffices to prove the first part of Theorem~\ref{thm:main} for the unit disc. Thus, from now on we assume that $\Omega=D(0,1)$ and we write $B$ and $g$ instead of $\tilde B$ and $\tilde g$.

We denote by $P_+$ the projection in $L^2(\partial D(0,1))$ onto the Hardy space, that is, onto the space spanned by the functions $e^{ims}$ with $m\geq 0$. We consider the operator
\[
    P_+\Dd_V P_+ 
    = 
    P_+ (-\ii\partial_s+V) P_+
\]
in $L^2(\partial D(0,1))$ with $V= g- A_\tau$. We claim that
\begin{equation}\label{eq:redbdry}
	N(\Hb_{\Ab,g},0) 
    \geq 
    N(P_+\Dd_V P_+,0) \,.
\end{equation}
This is the statement of Proposition~\ref{knlknad*},  but we briefly recall the argument in the current setting.  The inequality follows by another application of Glazman's lemma~\eqref{eq:glazman}. Indeed, we denote by $E$ the extension operator from ${\im P_+}$ to $\mathscr B(D(0,1))$, defined by $Ee^{im\cdot}(z) = z^m$ for $m\geq 0$. Then for $v\in\im(\mathbf 1_{(-\infty, 0)}(P_+\Dd_V P_+))$, we have\footnote{The operator \(P_+ \Dd_V P_+\) is bounded from below,    so \(v\in \im(\mathbf 1_{(-c, 0)}(P_+\Dd_V P_+))\subset H^1(\Gamma)\), where  \(c\) is some positive constant.} $Ev\in H^1(D(0,1))\cap \mathscr B(D(0,1))$ and, by Proposition~\ref{prop:identity},
\[
q_{\Ab,g}(Ev) = \IP{ P_+ \Dd_V P_+ v,v}_{L^2(\partial D(0,1))} \leq 0 \,.
\]
The inequality is strict if $v\neq 0$. Therefore, the assertion~\eqref{eq:redbdry} follows from Glazman's lemma~\eqref{eq:glazman} with $T=\Hb_{\Ab,g}$ and $\mathcal M = \im( E \mathbf 1_{(-\infty, 0)}(P_+\Dd_V P_+))$, together with the fact that $E$ is injective on $\im P_+$. The proof of the first part of Theorem~\ref{thm:main} is therefore complete, once we have shown the following lemma.

\begin{lemma}
	$N(P_+\Dd_V P_+,0) \geq \AC{-\Phi_V }$.
\end{lemma}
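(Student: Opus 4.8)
The plan is to apply Glazman's lemma~\eqref{eq:glazman}. Put $N\coloneqq\AC{-\Phi_V}$; if $N\le 0$ there is nothing to prove, so assume $N\ge 1$, which by definition of $\AC{\cdot}$ means $N-1<-\Phi_V\le N$, in particular $(N-1)+\Phi_V<0$. It then suffices to exhibit an $N$-dimensional subspace $\mathcal M$ of the Hardy space $\im P_+$, contained in the form domain $H^1(\partial D(0,1))\cap\im P_+$, on which the quadratic form $v\mapsto\IP{P_+\Dd_V P_+v,v}=\IP{\Dd_V v,v}$ is strictly negative. The tool I would use is the conjugation from the proof of Lemma~\ref{charpiv}: with $\Theta_V(s)=\Phi_V s-\int_0^s V(\varsigma)\dd\varsigma$ one has $\Dd_V=\ee^{\ii\Theta_V}(\Dd_0+\Phi_V)\ee^{-\ii\Theta_V}$ on $L^2(\partial D(0,1))$, and since $|\ee^{\ii\Theta_V}|=1$ this gives, for $w\coloneqq\ee^{-\ii\Theta_V}v$, the identity $\IP{\Dd_V v,v}=\IP{(\Dd_0+\Phi_V)w,w}$. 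As $\Dd_0+\Phi_V$ has $\ee^{\ii ks}$ as eigenfunction with eigenvalue $k+\Phi_V$, its negative spectral subspace is the closed span of $\{\ee^{\ii ks}:k+\Phi_V<0\}$, which by the inequalities $N-1<-\Phi_V\le N$ is exactly $\{\ee^{\ii ks}:k\le N-1\}$. So the task reduces to finding an $N$-dimensional space of Hardy functions $v$ for which $\ee^{-\ii\Theta_V}v$ has Fourier spectrum contained in $\{k\le N-1\}$.

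\textbf{The main obstacle} is that the unitary $\ee^{\ii\Theta_V}$ does not commute with $P_+$, so the negative subspace of $\Dd_0+\Phi_V$ cannot simply be transported into $\im P_+$. The way around this is a holomorphic factorization of $\ee^{\ii\Theta_V}$, which is available precisely because $\frac1{2\pi}\int_0^{2\pi}V=\Phi_V$ forces $\Theta_V(2\pi)=\Theta_V(0)$, i.e.\ $\ee^{\ii\Theta_V}$ is a genuine smooth function on the circle of winding number $0$. Concretely, writing the Fourier series $\Theta_V=\sum_{k\in\Z}c_k\ee^{\ii ks}$ (with $c_{-k}=\overline{c_k}$ since $\Theta_V$ is real) and setting $\chi\coloneqq\tfrac12 c_0+\sum_{k\ge 1}c_k\ee^{\ii ks}$, we get $\chi+\overline{\chi}=\Theta_V$ and hence $\ee^{\ii\Theta_V}=\ee^{\ii\chi}\,\ee^{\ii\overline{\chi}}$, where $\ee^{\pm\ii\chi}$ have Fourier spectrum in $\{k\ge 0\}$, $\ee^{\pm\ii\overline{\chi}}$ have Fourier spectrum in $\{k\le 0\}$, and all four factors are smooth and nowhere vanishing.

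\textbf{Construction of $\mathcal M$.} I would take $\mathcal M\coloneqq\ee^{\ii\chi}\cdot\Span\{1,\ee^{\ii s},\dots,\ee^{\ii(N-1)s}\}$. Since $\ee^{\ii\chi}$ and the monomials all have nonnegative Fourier spectrum (and a product of nonnegative-frequency functions is again nonnegative-frequency), $\mathcal M\subset\im P_+$; since $\ee^{\ii\chi}$ is smooth and nowhere zero, $\mathcal M\subset H^1(\partial D(0,1))$ and $\dim\mathcal M=N$. For $v=\ee^{\ii\chi}p$ with $\degree p\le N-1$ one computes $w=\ee^{-\ii\Theta_V}v=\ee^{-\ii\overline{\chi}}\,p$, whose Fourier spectrum lies in $\{k\le N-1\}$ because $\ee^{-\ii\overline{\chi}}$ has spectrum in $\{k\le 0\}$ and $p$ in $\{0,\dots,N-1\}$. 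Hence $w$ lies in the negative spectral subspace of $\Dd_0+\Phi_V$, so $\IP{\Dd_V v,v}=\IP{(\Dd_0+\Phi_V)w,w}<0$ for every $v\in\mathcal M\setminus\{0\}$ (note $w=0\iff v=0$, as $\ee^{-\ii\Theta_V}$ is a unitary multiplication operator). Glazman's lemma~\eqref{eq:glazman} then yields $N(P_+\Dd_V P_+,0)\ge\dim\mathcal M=N=\AC{-\Phi_V}$. The only genuinely nonelementary ingredient here is the winding-number-zero factorization $\ee^{\ii\Theta_V}=\ee^{\ii\chi}\ee^{\ii\overline{\chi}}$ together with its compatibility with the Hardy space; everything else is bookkeeping with Fourier supports. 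One further remark: the sharper identity $N(P_+\Dd_V P_+,0)=\AC{-\Phi_V}$ can instead be read off from the Benjamin--Ono trace formula of~\cite{KM}, which is the route suggested by the section heading, but for Theorem~\ref{thm:main} only the lower bound above is needed.
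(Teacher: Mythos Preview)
Your proof is correct and takes a genuinely different, more elementary route than the paper's. The paper proves the lemma by invoking a trace formula for the Benjamin--Ono equation on $\R$ from~\cite{KM}, which yields $N(\Pi_+(-\ii\partial+W)\Pi_+,0)\ge-\tfrac{1}{2\pi}\int_\R W$, and then transfers this to the circle via the Cayley transform. Your argument stays on the circle and is self-contained: the conjugation $\Dd_V=\ee^{\ii\Theta_V}(\Dd_0+\Phi_V)\ee^{-\ii\Theta_V}$ together with the holomorphic factorization $\ee^{\ii\Theta_V}=\ee^{\ii\chi}\ee^{\ii\bar\chi}$ (available precisely because $\Theta_V$ is periodic, i.e.\ $\ee^{\ii\Theta_V}$ has winding number zero) lets you write down an explicit $N$-dimensional test space $\ee^{\ii\chi}\cdot\Span\{1,\ldots,\ee^{\ii(N-1)s}\}$ in the Hardy space on which the form is strictly negative. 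This is essentially the Wiener--Hopf factorization that underlies the classical index computation for Toeplitz operators, and it avoids any external black box; by contrast, the paper's route imports a nontrivial identity from integrable systems but in exchange connects the problem to a richer structure (the scattering correction term). One small quibble: your closing remark that the Benjamin--Ono formula gives the sharper \emph{equality} $N(P_+\Dd_V P_+,0)=\AC{-\Phi_V}$ overstates what the single trace identity quoted in the paper yields---it gives the lower bound plus a nonnegative scattering term, not equality directly.
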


We note that the lemma is obvious if $V$ is a constant, because then the quadratic form of $P_+\Dd_V P_+$ is negative on the subspace spanned by $\ee^{\ii ms}$ with $0\leq m < -V=-\Phi_V$, which has dimension $\AC{-\Phi_V}$.

\begin{proof}
We first prove a variant of the lemma on the line. We denote by $\Pi_+$ the orthogonal projection in $L^2(\R)$ onto the  Hardy space  of functions whose Fourier transform vanishes on $(-\infty,0)$. We consider the operator $\Pi_+(-\ii\partial+W)\Pi_+$ in $L^2(\R)$. This is an operator that appears in the Lax pair for the Benjamin--Ono equation. A trace formula for this equation reads
\[
    N(\Pi_+(-\ii\partial+W)\Pi_+,0) - \frac1{(2\pi)^2} \int_0^\infty |\beta(\lambda)|^2\, \frac{\dd\lambda}{\lambda} 
    = 
    - \frac{1}{2\pi} \int_{\R} W(t)\dd t \,;
\]
see, e.g., \cite[Equation (113)]{KM}. Here $\beta$ is a certain scattering coefficient whose definition is irrelevant for us. From this trace formula, it follows immediately that $N(\Pi_+(-\ii\partial+W)\Pi_+,0) \geq - (2\pi)^{-1} \int_{\R} W(t)\,dt$.

Now we transfer the result from the line to the circle.  We first note that, if $W$ on $\R$ and $V$ on $\partial D(0,1)$ are related by
\[
    W(t) 
    = 
    \frac{2}{1+t^2}\ V\left(\frac{i-t}{i+t}\right),
\]
then
\[
\int_{\R} W(t)\dd t = \int_{\partial {D(0,1)}} V(s)\dd s \,.
\]
(Here we slightly abuse notation and do not distinguish between $V(s)$ and $V(e^{\ii s})$.) This follows by a standard change of variables. Further, we claim that the number of negative eigenvalues of $P_+(-\ii\partial_s +V)P_+$ coincides with the number of negative eigenvalues of $\Pi_+(-\ii\partial_t+W)\Pi_+$. This follows by Glazman's lemma~\eqref{eq:glazman}, similarly as in the previous subsection, provided one notes that if $\psi\in L^2(\partial {D(0,1)})$ and $\Psi\in L^2(\R)$ are related by
\[
    \Psi(t) = \psi\left(\frac{i-t}{i+t}\right) \,,
\]
then $\psi\in\im P_+$ if and only if $\Psi\in\im \Pi_+$, and
\[
    \int_{\partial {D(0,1)}} \overline\psi (-\ii\partial_s)\psi\dd s 
    =
    \int_{\R} \overline\Psi(-\ii\partial_t)\Psi\dd t \,,
    \quad 
    \int_{\partial {D(0,1)}} W |\psi|^2\,d\theta 
    =
    \int_{\R} V |\Psi|^2\,dt \,.
\]
This concludes the proof of the lemma.
\end{proof}

\begin{remark}
The trace formulas for the Benjamin-Ono equation have been extensively studied and are widely recognized in the field. Furthermore, by incorporating the methods presented in~\cite{Wu}, it is possible to provide additional details and strengthen the proof presented in~\cite{KM} and clarify the regularity required on $V$.  For further details on the Benjamin-Ono equation and its trace formulas see~\cite{CW,AT}.
\end{remark}

\begin{remark}
It would have been more efficient, but probably less intuitive, if we would have mapped $\Omega$ conformally onto the upper half-plane $\C_+$. Then we could have directly appealed to the result for the Benjamin--Ono equation on $\R$, without doing another conformal transformation.
\end{remark}

\section*{Acknowledgement}

The authors take the opportunity to thank the Knuth and Alice foundation (grant KAW 2021.0259) for the possibility to host A.~Kachmar in Lund for six months. The first listed author was supported by the grant 0135-00166B from the Independent Research Fund Denmark. The second listed author was partially supported by the grant DMS-1954995 from the US National Science Foundation and grant EXC-2111-390814868 from the German Research Foundation. The third listed author was supported by the Swedish Research Council Grant VR 2018-0350.  SF and AK acknowledge CAMS-AUB where part of this work was carried out.

\bibliographystyle{abbrv}
\bibliography{FGKS}

\end{document}